\numberwithin{equation}{section}
\newtheorem{theorem}{Theorem}[section]
\newtheorem{lemma}[theorem]{Lemma}
\newtheorem{corollary}[theorem]{Corollary}
\newtheorem{proposition}[theorem]{Proposition}
\theoremstyle{definition}
\theoremstyle{remark}
\newtheorem{remark}[theorem]{Remark}
\newcommand\In{\operatorname{in}}
\newcommand\Out{\operatorname{out}}
\newcommand\Sc{\operatorname{sc}}
\newcommand\cT{\mathcal T}
\newcommand\cM{\mathcal M}
\newcommand\cN{\mathcal N}
\newcommand\cH{\mathcal H}
\newcommand\cJ{\mathcal J}
\newcommand\cQ{\mathcal Q}
\newcommand\cG{\mathcal G}
\newcommand\cB{\mathcal B}
\newcommand\cP{\mathcal P}
\newcommand\balpha{\boldsymbol \alpha}
\newcommand\tbalpha{\widetilde{\boldsymbol \alpha}}
\newcommand\btheta{\boldsymbol \theta}
\newcommand\hbtheta{\widehat{\boldsymbol \theta}}
\newcommand\hbphi{\widehat{\boldsymbol \phi}}
\newcommand\hbn{\widehat{\boldsymbol n}}
\newcommand\bEta{\boldsymbol \eta}
\newcommand\tmu{\widetilde{\mu}}
\newcommand\tomega{\widetilde{\omega}}
\newcommand\bBeta{\boldsymbol \beta}
\newcommand\bj{\boldsymbol j}
\newcommand\bm{\boldsymbol m}
\newcommand\bn{\boldsymbol n}
\newcommand\bx{\boldsymbol x}
\newcommand\by{\boldsymbol y}
\newcommand\hbx{\hat{\boldsymbol x}}
\newcommand\ba{\boldsymbol a}
\newcommand\bB{\boldsymbol B}
\newcommand\bb{\boldsymbol b}
\newcommand\bH{\boldsymbol H}
\newcommand\bE{\boldsymbol E}
\newcommand\cE{\mathcal E}
\newcommand\cW{\mathcal W}
\newcommand\bxi{\boldsymbol \xi}
\newcommand\Ker{\operatorname{ker}}
\newcommand\cC{\mathcal{C}}
\newcommand\cL{\mathcal{L}}
\newcommand\cS{\mathcal{S}}
\newcommand\cD{\mathcal{D}}
\newcommand\bomega{\boldsymbol{\omega}}
\renewcommand\Im{\operatorname{Im}}
\newcommand\bbC{\mathbb C}
\newcommand\bbR{\mathbb R}
\newcommand\bbZ{\mathbb Z}
\newcommand\pa{\partial}
\newcommand\restrictedto{\upharpoonright}
\newcommand\CI{{\mathcal C}^{\infty}}
\newcommand\Id{\operatorname{Id}}
\DeclareMathOperator{\dR}{dR}
\begin{document}

\title{Debye Sources, Beltrami Fields, and a Complex Structure on Maxwell
  Fields}

\author{Charles L. Epstein\footnote{Depts. of Mathematics and
    Radiology, University of Pennsylvania, 209 South 33rd Street,
    Philadelphia, PA 19104. E-mail: cle@math.upenn.edu. Research of
    C.L.~Epstein partially supported by NSF grants DMS09-35165 and
    DMS12-05851.}, \, 
Leslie Greengard\footnote{Courant Institute, New York University, 251
    Mercer Street, New York, NY 10012.  E-mail:
    greengard@cims.nyu.edu. Research of L.~Greengard partially
    supported by the U.S. Department of Energy under contract
    DEFG0288ER25053 and by the Air Force Office of Scientific Research
    under NSSEFF Program Award FA9550-10-1-0180.}, \, and Michael
  O'Neil\footnote{Courant Institute, New York University, 251 Mercer
    Street, New York, NY 10012.  E-mail: oneil@cims.nyu.edu. Research
    of M.~O'Neil partially supported by the Air Force Office of
    Scientific Research under NSSEFF Program Award FA9550-10-1-0180.
}}

\maketitle

\begin{abstract} The Debye source representation for solutions to the time
    harmonic Maxwell equations is extended to bounded domains with
    finitely many smooth boundary components. A strong uniqueness
    result is proved for this representation. Natural complex
    structures are identified on the vector spaces of time-harmonic
    Maxwell fields. It is shown that in terms of Debye source data,
    these complex structures are uniformized, that is, represented by
    a fixed linear map on a fixed vector space, independent of the
    frequency. This complex structure relates time-harmonic Maxwell
    fields to constant-$k$ Beltrami fields, i.e. solutions of the
    equation
    \begin{equation*}
      \nabla\times\bE=k\bE.
    \end{equation*}
    A family of self-adjoint boundary conditions are defined for the
    Beltrami operator. This leads to a proof of the existence of
    zero-flux, constant-$k$, force-free Beltrami fields for
    any bounded region in $\bbR^3$, as well as a constructive method
    to find them. The family of self-adjoint boundary value problems
    defines a new spectral invariant for bounded domains in
    $\bbR^3.$\\

\noindent {\bf Keywords}: Maxwell's equations, Debye sources,
$k$-Neumann fields, complex structure, Beltrami fields, constant-$k,$
force-free Beltrami fields, self-adjoint boundary value problems for
the curl operator.
\end{abstract}

\onehalfspacing

\section{Introduction}
\label{sec:uniq}
In several previous papers~\cite{EpGr,EpGr2}, we introduced a new
representation for the time-harmonic Maxwell equations in $\bbR^3$
based on two scalar densities defined on the surface $\partial D$ of a
smooth bounded region $D$, which we refer to as generalized Debye
sources. Recall that a pair of vector fields $(\bE,\bH)$ defined in an
open subset of $\bbR^3$, satisfies the \THME[$k$] if
\begin{equation}
  \nabla\times\bE= ik \bH\quad \text{ and } \quad \nabla\times\bH= -ik \bE.
\end{equation}
If $k\neq 0$, then this implies the divergence equations
\begin{equation}
  \nabla\cdot\bE \, = \, \nabla\cdot\bH \, = \,0.
\end{equation}
In the discussion that follows, unless otherwise noted, $D$ 
denotes a bounded region in $\mathbb R^3$ with smooth boundary
$\partial D$. When considering  an exterior problem, the unbounded
domain with smooth boundary $\Gamma$  is often referred to as $\Omega,$ though
$\Omega$ is sometimes used to refer to bounded components of $\Gamma^c$ as
well.

As in~\cite{EpGr,EpGr2}, we use exterior forms to
represent Maxwell fields. We use a 1-form $\bxi$ to represent the
electric field and 2-form $\bEta$ for the magnetic field. Faraday's
law and Ampere's law (the curl equations) then take the form:
\begin{equation}
  \label{eq:ME1}
  d\bxi=ik\bEta, \qquad d^*\bEta=-ik\bxi,
\end{equation}
where 
$$k \in \mathbb C^+=\{z:\Im z\geq 0\}.$$ 
For $k\neq 0$, these equations imply the divergence equations, which take the
form:
\begin{equation}
  \label{eq:ME2}
  d^*\bxi=0, \qquad d\bEta=0.
\end{equation}
We call the system of equations~\eqref{eq:ME1} and~\eqref{eq:ME2} the
\THME[$k$].  Together, \eqref{eq:ME1} and~\eqref{eq:ME2} form an
elliptic system for the pair $(\bxi,\bEta)$.

In our earlier work the emphasis was on applications to scattering
theory. This previous analysis was performed largely in an exterior domain
$\Omega$ where the solution takes the form
\begin{equation}
  (\bxi,\bEta) = (\bxi^{\Sc},\bEta^{\Sc}) +(\bxi^{\In},\bEta^{\In}).
\end{equation}
The components $(\bxi^{\In},\bEta^{\In})$ are called the incoming
field; the scattered field, $(\bxi^{\Sc},\bEta^{\Sc})$, is assumed to
satisfy the outgoing radiation condition in $\Omega$. For the electric
field, this reads:
\begin{equation}
  \label{eq:radcond}
  i_{\hbx}d\bxi-ik\bxi=O\left(\frac{1}{|x|^2}\right),\quad
\bxi=O\left(\frac{1}{|x|}\right),
\end{equation}
where $\hbx=\frac{\bx}{\|\bx\|}$. The same condition is satisfied by
$\star_3\bEta.$ We use the notation $\star_j$ for the Hodge star operator
acting on forms defined on a $j$-dimensional oriented, Riemannian manifold. In
the present context $\star_3$ is the Hodge star operator on $\bbR^3$ with the
Euclidean metric, and standard orientation. It is a classical result that if
$(\bxi,\bEta)$ solves the \THME[$k$] for $k\neq 0$, and one component is
outgoing, then so is the other. When $k=0$ the equations for $\bxi$ and $\bEta$
decouple; the divergence equations,~\eqref{eq:ME2}, are no longer a consequence
of the curl equations, but are nonetheless assumed to hold. 

In this paper our emphasis shifts to an application of the Debye
source representation in bounded domains $D\subset\bbR^3$. We assume
that $\partial D$ is smooth, but allow it to have multiple connected
components.  The Debye source representation uses non-physical scalar
sources along with harmonic 1-forms on $\partial D$ to represent
the space of solutions to~\eqref{eq:ME1} in a manner that is
insensitive to the choice of wave number $k \in \mathbb C^+$.
The Debye sources are used to define a pair of
pseudocurrents $\bj$ and $\bm$ on $\partial D$, which we call {\em Debye
currents}. For an appropriate relationship between these
pseudocurrents, e.g.~\eqref{eqn3.21.001}, we show that the Debye
source representation is injective. This
representation leads to Fredholm equations of the second kind for solutions
to~\eqref{eq:ME1} with specified tangential $\bxi$ or $\bEta$
components along $\partial D$; this implies that the representation
with the restricted source data is also surjective.

We then turn our attention to a very interesting feature of the space
of solutions to the time-harmonic Maxwell system: it has a complex
structure. If $(\bxi,\bEta)$ solves~\eqref{eq:ME1} in a domain $D$,
then so does 
\begin{equation}
  \cJ(\bxi,\bEta)=(-\star_3\bEta,\star_3\bxi).
\end{equation}
  In terms
of the more traditional vector field notation, this is just the
statement that if $(\bE,\bH)$ solves the \THME[$k$], then so does
$(-\bH,\bE)$.

Since $\star_3^2=\Id$, we see that
\begin{equation}
  \cJ^2(\bxi,\bEta)=-(\bxi,\bEta).
\end{equation}
In other words, $\cJ$ defines a complex structure on the vector space,
$\cM_k(D)$, of solutions to \THME[$k$] in $D$. Its eigenvalues are $\pm i,$ and
this immediately
implies that this space of solutions is a direct sum of the two eigenspaces,
one in which $\star_3\bEta=-i\bxi$ and another in which
$\star_3\bEta=i\bxi$.  These subspaces are denoted by
$\cM_k^{(1,0)}(D)$ and $\cM_k^{(0,1)}(D)$, respectively. In these
subspaces the electric field satisfies the equations
\begin{equation}
  \star_3d\bxi= k\bxi\quad \text{ and } \quad \star_3d\bxi=-k\bxi, 
\end{equation}
which are called Beltrami equations.  The operator
$\star_3d$ is the exterior form representation of the vector 
curl operator.  In the classical vector notation, these equations are
\begin{equation}
  \nabla\times\bE=k\bE \quad \text{ and } \quad \nabla\times\bE=-k\bE.
\end{equation}

For applications in fluid mechanics and plasma physics it is
especially interesting to find solutions with vanishing normal
components \cite{bauer,hudson,hudson2,kress2,marsh}. These are often
referred to as \emph{constant-$k$, force-free fields}.  A relationship
of this sort was used in~\cite{PicardCurlED} to study the Beltrami
equation, though without explicitly defining the complex structure on
$\cM_k(D)$.

If $(\bj,\bm)$ are the Debye currents that represent a solution
$(\bxi,\bEta)$, then $(-\bm,\bj)$ are the currents that represent
$\cJ(\bxi,\bEta)$. This shows that the Debye representation provides a
uniformization of this complex structure on $\cM_k(D)$: for any value
of $k$, the structure $\cJ$ is represented by a \emph{fixed} linear
transformation on a \emph{fixed} vector space. This observation leads
to effective numerical methods for solving the Beltrami equation in
either a bounded or unbounded domain.

As in our earlier work on solving the Maxwell system, we obtain Fredholm
equations of second kind for the normal component of a Beltrami field. If
$\partial D$ is not simply connected, then these integral equations need to be
augmented with $p$ algebraic conditions, where $p$ is the genus of $\partial
D$. There are many possible choices for these conditions, which are effectively
parametrized by the Lagrangian subspaces, $\Lambda^1_H(D),$ relative to the
canonical wedge product pairing, of the de Rham cohomology group
$H^1_{\dR}(\partial D)$. Each $\lambda\in\Lambda^1_H(D)$ defines a self-adjoint
boundary value problem for the curl-operator with a real spectrum
$\sigma(\lambda,D).$ 

The intersection of all these spectra,
\begin{equation}
  E_{\cB}=\bigcap_{\lambda\in\Lambda_H(D)}\sigma(\lambda,D) ,
\end{equation}
is an invariant of the embedding of $D$ into $\bbR^3.$ A priori one might
expect this to be a finite set, but for round balls and tori of revolution we
show that $|E_{\cB}|$ is infinite. We also show that, in all cases, $k\in E_{\cB}$ if and
only if there is a 1-form $\bxi$ defined in $D$ with
\begin{equation}
  d^*\bxi=0,\quad \star_3d\bxi=k\bxi,\quad d_{\pa D}\bxi_{t}=0,
\end{equation}
with $[\bxi_t]$ representing the trivial class in $H^1_{\dR}(\pa D).$ Here, and
in the sequel: 
$$\bxi_t\overset{d}{=}\bxi\restrictedto_{T\pa D}.$$

These conditions have been defined and analyzed by several authors, see for
example~\cite{HKTSACurlBD,PicardCurlBD,PicardCurlED}. We provide a new, and
somewhat simpler proof, of the self-adjointness of these unbounded
operators. Unlike in the existing literature, we provide a straightforward way
to reduce the solution of the Beltrami equation to well-conditioned, integral
equations on the boundary. Using an alternative integral representation, Kress
worked out something similar in the case of a torus, see~\cite{kress2}.
Additionally, the strong uniqueness result for the Debye source representation
also allows us to use these boundary equations to find both the spectrum and
the eigenvectors of these self-adjoint extensions of the curl operator. We give
numerical results for the cases of  the unit ball and torus-of-revolution. We close by
briefly considering the analogous self-adjoint boundary conditions for the
time-harmonic Maxwell equations in a bounded domain.

\section{Debye Sources and Potentials}\label{s.potentials}

In this section, we quickly review the representation of solutions to
the \THME[$k$] in terms of both potentials and anti-potentials.  For
the moment, we will assume that we are working in \emph{either} a
bounded \emph{or} an unbounded domain $\Omega$ with smooth boundary, connected
$\Gamma$.  The final assumption is just for ease of exposition; later in the
paper we consider regions whose boundaries have several components.

For $k \in \mathbb C^+$, as in~\cite{EpGr,EpGr2}, we
represent the solution to the \THME[$k$] by setting:
\begin{equation}
\begin{split}
\bxi=ik\balpha-d\phi-d^*\btheta &=ik\balpha-d\phi-\star_3d\star_3\btheta,\\
\bEta=ik\btheta-d^*\Psi+d\balpha&=ik\btheta+\star_3d\psi+d\balpha,
\end{split}
\label{eqn29}
\end{equation}
where $\phi$ is a scalar function, $\balpha$ a 1-form, $\btheta$ a
2-form, and $\Psi=\psi dV$, a 3-form; $\balpha$ is the usual vector
potential and $\phi$ the corresponding scalar potential, while
$\btheta$ is the vector anti-potential and $\psi$ the corresponding
scalar anti-potential.

We assume that all of the potentials solve the Helmholtz equation,
$\Delta\bBeta+k^2\bBeta=0,$ in the correct form degree. Here $\Delta=-(d^*d+dd^*)$ denotes the
(negative) Laplace operator.  In order for
$(\bxi, \bEta)$ to satisfy Maxwell's equations in $\Gamma^c$:
\begin{equation}
  \label{eq:1.29.8.1}
  d\bxi=ik\bEta, \qquad d^*\bEta=-ik\bxi,
\end{equation}
it suffices to check that (in the Lorenz gauge)
\begin{equation}
d^*\balpha=-ik \phi, \qquad d\btheta=-ik\Psi.
\label{pteq3}
\end{equation}

We let $g_k(x,y)$ denote the fundamental solution for the scalar
Helmholtz equation with Helmholtz parameter $k \in \mathbb C^+,$ which
satisfies the outgoing Sommerfeld radiation condition:
\[
g_k(x,y) = \frac{e^{i k |x-y|}}{4 \pi |x-y|}.
\]
All of the potentials can be expressed in terms of a pair of 1-forms $\bj, \bm$
defined on $\Gamma$, which define electric and magnetic currents. As $\Gamma$
is embedded in $\bbR^3$, these 1-forms can be expressed in terms of the ambient
basis from $\bbR^3$, e.g.,
\begin{equation}
\bj=j_1(x)dx_1+j_2(x)dx_2+j_3(x)dx_3.
\end{equation}
We normalize both with the requirement
\begin{equation}
i_{\bn}\bj=i_{\bn}\bm\equiv 0,
\label{1frmnrm}
\end{equation}
which is analogous to the classical requirement that the current be
tangent to the boundary. We call forms satisfying this condition
\emph{tangential 1-forms} on $\Gamma.$

It is well-known \cite{Jackson,Papas} that conditions \eqref{pteq3}
are satisfied if
\begin{equation}
\begin{aligned}
\balpha&=\int\limits_{\Gamma}g_k(x,y)[\bj(y)\cdot d\bx]dA(y), &\qquad
\phi&=\frac{1}{ik}\int\limits_{\Gamma} g_k(x,y) d_{\Gamma}
\star_2\bj(y), \\ \btheta&=\star_3\int\limits_{\Gamma}g_k(x,y)
     [\bm(y)\cdot d\bx]dA(y), &\qquad
     \Psi&=-\frac{dV_x}{ik}\int\limits_{\Gamma} g_k(x,y)
     d_{\Gamma}\star_2\bm(y).
\label{srfint2}
\end{aligned}
\end{equation}

For most applications of the Debye source representation we do
\emph{not} use the currents $\bj$ and $\bm$ as the \emph{fundamental}
parameters, though in the present context we will sometimes find this
to be useful. In~\cite{EpGr}, we introduced the notion of generalized
Debye sources, $r$, $q$, which are scalar functions defined on $\Gamma$
by the differential equations:
\begin{equation}
\frac{1}{ik}d_{\Gamma}\star_2\bj=rdA,\quad
\frac{1}{ik}d_{\Gamma}\star_2\bm=-qdA,
\label{eqn53}
\end{equation}
known as \emph{consistency conditions}.
From this definition, we see that $rdA$ and $qdA$ are exact and hence
their mean values vanish on $\Gamma$,
\begin{equation}\label{eqn3.8.004}
\int\limits_{\Gamma}rdA=\int\limits_{\Gamma}qdA=0.
\end{equation}
This is {\bf necessary} for the conditions in~\eqref{eqn53} to hold,
and thus, for $(\bxi,\bEta)$ to satisfy the Maxwell equations. In
terms of the generalized Debye sources:
\begin{equation}
\begin{split}
\phi&=\int\limits_{\Gamma} g_k(x,y)r(y)dA(y), \\
\Psi&= dV_x \int\limits_{\Gamma} g_k(x,y) q(y)dA(y).
\end{split}
\label{srfint3}
\end{equation}
It should be emphasized that any data $(r,q,\bj,\bm)$ that
satisfy~\eqref{eqn53} and~\eqref{eqn3.8.004} define a solution to the
\THME[k] in $\Gamma^c$. We call the pairs $(r,q)$ scalar \emph{Debye source
  data}. This flexibility allows us to easily construct elements of
$\cM_k(\Omega)$, which thereby leads to an efficient method for
finding Beltrami fields.

\subsection{Boundary equations}
Following the convention in~\cite{EpGr,EpGr2}, we use $\star_2\bxi^{\pm}_t$ and
$\star_2([\star_3\bEta^{\pm}]_t)$, which correspond to $\bn\times\bE^{\pm}$ and
$\bn\times\bH^{\pm}$, respectively, to represent the tangential components. Here $+$
refers to the limit from the unbounded component of $\Gamma^c$ and $-$ the
limit from the bounded component. Note that
$i_{\bn}\bEta=-\star_2[\star_3\bEta]_t$. The scalar functions
$i_{\bn}\bxi^{\pm}$ and $i_{\bn}[\star_3\bEta^{\pm}]$, which equal
$\bn\cdot\bE^{\pm}$, and $\bn\cdot\bH^{\pm}$, represent the normal components. These
limiting values satisfy the integral equations:
\begin{equation}
\begin{split} 
\left(\begin{matrix}\star_2\bxi^{\pm}_t\\
\star_2([\star_3\bEta^{\pm}]_t)\end{matrix}\right) &=  
\frac{1}{2}\left(\begin{matrix}\mp\bm\\\pm\bj\end{matrix}\right)
+\left(\begin{matrix}-K_1& 0 & ikK_{2,t}& -K_4\\
0 & K_1 & K_4 & ik K_{2,t}\end{matrix}\right)
\left(\begin{matrix}r\\q\\\bj\\\bm\end{matrix}\right)\\
&\overset{d}=
  \left(\begin{matrix}\cT^{\pm}_{\bxi}\\
\cT^{\pm}_{\bEta}\end{matrix}\right)(r,q,\bj,\bm),
\label{tngtds0}
\end{split}
\end{equation}
and
\begin{equation}
\begin{split} 
  \left(\begin{matrix}i_{\bn}\bxi^{\pm}\\
i_{\bn}[\star_3\bEta^{\pm}]\end{matrix}\right)&= 
\frac{1}{2}\left(\begin{matrix}\pm r\\\mp q\end{matrix}\right)
+\left(\begin{matrix}-K_0& 0 & ikK_{2,n}& -K_3\\
 0 & K_0 & K_3 & ik K_{2,n}\end{matrix}\right)
\left(\begin{matrix}r\\q\\\bj\\\bm\end{matrix}\right)\\
&\overset{d}= \left(\begin{matrix}\cN^{\pm}_{\bxi}\\
\cN^{\pm}_{\bEta}\end{matrix}\right)(r,q,\bj,\bm).
\label{nrmds0}
\end{split}
\end{equation}
The equations in~\eqref{tngtds0} and~\eqref{nrmds0} correct  sign errors in~\cite{EpGr2}.
The integral operators $K_0, K_1, K_{2,t}, K_{2,n}, K_3$ and $K_4$ are defined
in Appendix~\ref{bdryops}.

For non-zero wave numbers, we can use~\eqref{eqn53} to rewrite these boundary
equations in terms of the currents $\bj$ and $\bm$ alone. For example:
\begin{equation}
\left(\begin{matrix}\star_2\bxi^{\pm}_t\\
\star_2([\star_3\bEta^{\pm}]_t)\end{matrix}\right) = 
\frac{1}{2}\left(\begin{matrix}\mp\bm\\\pm\bj\end{matrix}\right)
+\left(\begin{matrix}-K_1& 0 & ikK_{2,t}& -K_4\\
0 & K_1 & K_4 & ik K_{2,t}\end{matrix}\right)
\left(\begin{matrix}-d_{\Gamma}^*\bj/ik\\d_{\Gamma}^*\bm/ik\\\bj\\\bm\end{matrix}\right)
\label{tngtds0.01}
\end{equation}
where $d_{\Gamma}^*\balpha=-\star_2d_{\Gamma}\star_2\balpha$, on a
1-form. Through this formulation, any pair of 1-forms
$(\bj,\bm)$ defines a solution to the \THME[$k$] in $\Gamma^c.$ 
In general these fields might vanish in one component or the other. By
imposing relations between these 1-forms we can obtain injective, and
therefore surjective representations. This is described in the next
section.

\section{Uniqueness Theorems}

We now turn our attention to Maxwell fields in \emph{interior} domains.  As is
always the case with representations yielding Fredholm equations of index zero,
proving surjectivity is reduced to proving injectivity. In~\cite{EpGr}
and~\cite{EpGr2} this is established for outgoing solutions in an exterior
domain $\Omega$, possibly with multiple boundary components. In those works, we
produced a Fredholm system of second kind for the solution to the \emph{perfect
  electric conductor} problem, wherein $\bxi_t$ is specified on $\Gamma$, the
boundary of $\Omega$. By showing that this system of equations has trivial
null-space it follows from the Fredholm alternative that the representation is
surjective.

There is an analogous system of Fredholm equations of second kind for
the solution to the interior problem for equations~\eqref{eq:ME1} in
$D$ with either $\bxi_t$ or $[\star_3\bEta]_t$ specified on $\partial
D$. The interior problem is different from the exterior in that there
exists a sequence of real frequencies $\{k_j\}$ for which there are
non-trivial solutions to~\eqref{eq:ME1} that have a vanishing tangent
component of the $\bxi$-field. On the other hand we can use the
following lemma to obtain the desired result.
\begin{lemma}\label{lem3.2.002}
Let $k\in \mathbb C^+$, and $D$ be a bounded domain with smooth
connected boundary $\partial D$. If $\bm=-\star_2\bj$, then with
  \begin{equation}\label{eqn3.13.003}
    \bj=ik(d_{\Gamma}R_0r-\star_2 d_{\Gamma}R_0q)+\bj_H,
  \end{equation}
 the map
  \begin{equation}\label{eqn3.2.006}
    (r,q,\bj_H)\mapsto  \left(\begin{matrix}\cT^{-}_{\bxi}\\
\cT^{-}_{\bEta}\end{matrix}\right)(r,q,\bj,-\star_2\bj)
  \end{equation}
is injective. Here $R_0$ is the inverse of the (negative) Laplacian
$\Delta_{\Gamma}$ on scalar functions of mean zero and $\bj_H$
is a tangential harmonic 1-form on $\pa D.$
\end{lemma}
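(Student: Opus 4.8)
The plan is to prove that the kernel of the map is trivial. So I would start by assuming that $(r,q,\bj_H)$ lies in the kernel: with $\bj$ given by~\eqref{eqn3.13.003} and $\bm=-\star_2\bj$, suppose $\cT^-_{\bxi}(r,q,\bj,\bm)=0$ and $\cT^-_{\bEta}(r,q,\bj,\bm)=0$. Let $(\bxi,\bEta)$ be the Debye field determined by this data. By construction it solves the time-harmonic Maxwell equations in $\bbR^3\setminus\Gamma$ and, because the potentials are built from the outgoing kernel $g_k$, it satisfies the outgoing radiation condition in the unbounded component $\Omega$. The first step is to read off the exterior tangential traces. In~\eqref{tngtds0} the block of integral operators is common to both signs, so the entire jump across $\Gamma=\partial D$ is carried by $\tfrac12(\mp\bm,\pm\bj)$; combining this with the vanishing of $\cT^-_{\bxi}$ and $\cT^-_{\bEta}$ gives $\star_2\bxi^+_t=-\bm=\star_2\bj$ and $\star_2([\star_3\bEta^+]_t)=\bj$. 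In the vector dictionary of Section~\ref{s.potentials} these read $\bn\times\bE^+\leftrightarrow\star_2\bj$ and $\bn\times\bH^+\leftrightarrow\bj$.

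The key is that the relation $\bm=-\star_2\bj$ fixes the sign of the boundary energy flux. Rewriting the traces as tangent vectors, $\bE^+_t\leftrightarrow\bj$ and $\bH^+_t\leftrightarrow-\star_2\bj$, that is $\bH^+_t=-\bn\times\bE^+_t$. Since $(\bE^+\times\overline{\bH^+})\cdot\bn=(\bn\times\bE^+)\cdot\overline{\bH^+}$ and only the tangential part of $\overline{\bH^+}$ survives, I obtain
\begin{equation*}
  (\bE^+\times\overline{\bH^+})\cdot\bn=-\,|\bE^+_t|^2\le 0
\end{equation*}
pointwise on $\partial D$, a real and sign-definite quantity.

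Next I would feed this into the outgoing energy identity. With $\bF=\bE\times\overline{\bH}$ one has $\nabla\cdot\bF=ik|\bH|^2-i\bar k|\bE|^2$, whose real part vanishes when $k$ is real and equals $-\Im k\,(|\bH|^2+|\bE|^2)$ when $\Im k>0$. Applying the divergence theorem on $\Omega\cap B_R$: for real $k$ the bulk term is purely imaginary and the flux through $S_R$ tends, by the radiation condition, to the radiated power $\propto\int_{S^2}|\bE_\infty|^2\ge 0$, so $\Re\int_{\partial D}(\bE^+\times\overline{\bH^+})\cdot\bn\ge 0$; for $\Im k>0$ the field is $L^2$, the sphere term drops, and the sign-definite bulk term yields the same inequality. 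Comparing with the previous paragraph forces $\int_{\partial D}|\bE^+_t|^2=0$, hence $\bE^+_t=0$ on $\partial D$ and therefore $\bj=0$. It is worth emphasizing that this step uses only the exterior radiation condition, so it is insensitive to the interior resonances $\{k_j\}$ mentioned above and works for every $k\in\bbC^+$.

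Finally I would recover the scalar data from $\bj=0$. With $\bj=0$, equation~\eqref{eqn3.13.003} becomes $ik\,d_{\Gamma}R_0 r-ik\,\star_2 d_{\Gamma}R_0 q+\bj_H=0$, which is exactly the Hodge decomposition of the zero $1$-form on the closed surface $\partial D$ into its exact, coexact, and harmonic parts. By uniqueness of that decomposition, $\bj_H=0$, $d_{\Gamma}R_0 r=0$, and $d_{\Gamma}R_0 q=0$; since $R_0 r$ and $R_0 q$ have mean zero, the last two give $r=q=0$, so the kernel is trivial and the map is injective. The main obstacle is the energy step: for real $k$ the exterior field does not decay, so one genuinely needs the radiation condition and the nonnegativity of the radiated power, and one must keep the orientation and $\star_2$ conventions exactly straight so that the flux generated by $\bm=-\star_2\bj$ carries the sign that opposes the radiated power.
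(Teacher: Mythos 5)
For $k\neq 0$ your argument is correct and is, in substance, the paper's own proof. The paper likewise reduces to the exterior traces via the jump relations in~\eqref{tngtds0} (after first using Colton--Kress to conclude the interior field vanishes, a step you legitimately bypass since the jump is carried entirely by the $\tfrac12(\mp\bm,\pm\bj)$ terms), and then applies the exterior energy identity --- equation (6.14) of~\cite{EpGr}, which is exactly your Poynting/divergence-theorem computation written in the language of forms --- to conclude that the boundary term, which the relation $\bm=-\star_2\bj$ makes equal to the sign-definite quantity $-2|k|^2\int_{\Gamma}\bj\wedge\star_2\overline{\bj}$, must vanish; hence $\bj=0$, and the Hodge decomposition~\eqref{eqn3.13.003} then yields $r=q=0$ and $\bj_H=0$.

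The genuine gap is the case $k=0$, which is included in the statement because the paper defines $\bbC^+=\{z:\Im z\geq 0\}$. Your final step recovers the scalar sources from $\bj=0$ via uniqueness of the Hodge decomposition applied to $ik\,d_{\Gamma}R_0r-ik\star_2d_{\Gamma}R_0q+\bj_H=0$, but at $k=0$ the scalar sources enter only through the factor $ik$: the identity collapses to $\bj_H=0$ and carries no information about $r$ and $q$, since concluding $d_{\Gamma}R_0r=d_{\Gamma}R_0q=0$ requires dividing by $ik$. (The exterior energy step also degenerates at $k=0$, where the curl equations decouple into static problems and there is no radiated power, though that part can be salvaged by decay estimates.) The paper therefore runs a separate argument at zero frequency: the vanishing of the interior field, fed into the normal-component equations~\eqref{nrmds0}, gives $\tfrac{r}{2}+K_0r=0$ and $\tfrac{q}{2}+K_0q=0$ as in~\eqref{eqn3.19.001}, and classical potential theory (Colton--Kress) forces $r=q=0$ for mean-zero densities, while the harmonic part is killed by a cohomological (Lagrangian subspace) argument. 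Your proof is complete only after either supplying such an argument for $k=0$ or restricting the claim to $k\in\bbC^+\setminus\{0\}$; the remark in your last paragraph that the argument ``works for every $k\in\bbC^+$'' is precisely where this is overlooked.
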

\begin{proof} We first observe that if $(r,q,\bj_H)$ is in the null-space of
  the map in~\eqref{eqn3.2.006}, then the tangent components,
  $(\bxi^{-}_t,\bEta^{-}_t),$ both vanish. Theorem 4.1
  in~\cite{ColtonKress}, or Theorem 5.5.1 in~\cite{Nedelec}  show that
  $(\bxi^{-},\bEta^{-})$ is determined in $D$ by the pair of tangent
  components along $\partial D$, and therefore $(\bxi^{-},\bEta^{-})$ vanishes
  identically in $D.$

We first treat the case $k\neq 0$. Assume that we have data
$(r,q,\bj_H)$ so that, with $\bm=-\star_2\bj$, the solution
$(\bxi^-,\bEta^-)$ in $D$ defined by~\eqref{eqn29} is zero.  The jump
conditions then imply that
\begin{equation}\label{eqn3.14.03}
  \bxi_t^+=\bj\quad \text{ and } \quad i_{\bn}\bEta^+=-\bj.
\end{equation}
In the exterior $D^c$, the outgoing condition and equation (6.14)
from~\cite{EpGr} state that:
\begin{multline}\label{eqn3.15.03}
  \lim_{R\to\infty}\bigg(2\Im(k)\int\limits_{D^c_R}[\|d\bxi^+\|^2+|k|^2\|\bxi\|^2]dV +\int\limits_{S_R}[\|i_{\hbx}d\bxi^+\|^2+|k|^2\|\bxi^+\|^2]dA\bigg)=\\ -2\Im\left[k\int\limits_{\partial
      D}\bxi^+\wedge\star_2i_{\bn}\overline{d\bxi^+}\right].
\end{multline}
Here $D^c_R=D^c \cap B_R$.  Using the PDE and~\eqref{eqn3.14.03} we see
that
\begin{equation}
  -2\Im\left[k\int\limits_{\Gamma}\bxi^+\wedge\star_2i_{\bn}\overline{d\bxi^+}\right]=
-2|k|^2\int\limits_{\Gamma}\bj\wedge\star_2\overline{\bj}.
\end{equation}
As the left hand side of~\eqref{eqn3.15.03} is clearly non-negative,
this shows that $\bj=0$. As $\bj$ satisfies~\eqref{eqn3.13.003}, the
fact that $r$ and $q$ have mean zero then completes the proof of the
lemma.

The $k=0$ case is quite similar to the argument in the proof of
Theorem 5.2 in~\cite{EpGr2}. Suppose that we have data $(r,q,\bj_H)$
so that the vector sources satisfy $\bm=-\star_2\bj$ and the harmonic
fields $(\bxi^-,\bEta^-)=0$. We note that at zero frequency,
$\bj=\bj_H$. The jump conditions therefore show that $\bxi^+_t=\bj_H$
and $[\star_3\bEta^+]_t=\star_2\bj_H$. On the other hand, $\bxi^+$ and
$\star_3\bEta^+$ represent cohomology classes in $H^1_{\dR}(D^c)$, and
therefore $\bxi^+_t$ and $[\star_3\bEta^+]_t$ belong to the
complexification of a real Lagrangian subspace of $H^1_{\dR}(\pa D)$ with respect to the
wedge product pairing, hence
\begin{equation}
  \int\limits_{\partial D}\bj_H\wedge\star_2\overline{\bj_H}=0.
\end{equation}
The fact that $\bxi_-$ and $\bEta_-$ both vanish, along
with~\eqref{nrmds0}, implies that
\begin{equation}\label{eqn3.19.001}
  \frac{r}{2}+K_0r=0\quad \text{ and } \quad \frac{q}{2}+K_0q=0.
\end{equation}
As $r$ and $q$ are assumed to have mean zero, it is classical
(see~\cite{ColtonKress}) that~\eqref{eqn3.19.001} implies that $r=q=0$.
\end{proof}

For later applications the following surjectivity result is very useful.
\begin{proposition}\label{prop3.1} 
  If $D$ is a bounded connected region in $\bbR^3$ with smooth connected
  boundary $\partial D$, then for $k\in \mathbb C^+$, all solutions to
  \THME[$k$] in $D$ are represented by Debye source data $(r,q,\bj_H).$
  Similarly, for an unbounded region, all outgoing solutions are represented by
  such data.
\end{proposition}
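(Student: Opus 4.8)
The plan is to invoke the principle noted at the opening of this section: a representation yielding Fredholm equations of the second kind and of index zero is surjective as soon as it is injective, and injectivity is exactly the content of Lemma~\ref{lem3.2.002}. Write $\Phi$ for the boundary trace map~\eqref{eqn3.2.006}, which sends the Debye source data $(r,q,\bj_H)$ --- a pair of mean-zero scalars together with a tangential harmonic 1-form --- to the pair of tangential components $(\star_2\bxi^-_t,\star_2([\star_3\bEta^-]_t))$, i.e.\ $(\bn\times\bE,\bn\times\bH)$, of the field it represents through~\eqref{eqn29} and~\eqref{srfint2} under the constraint $\bm=-\star_2\bj$ with $\bj$ as in~\eqref{eqn3.13.003}. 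I would regard $\Phi$ as an operator between suitable Sobolev spaces on $\partial D$ and show that the associated boundary system is Fredholm of index zero; together with the triviality of its null space from Lemma~\ref{lem3.2.002}, the Fredholm alternative then makes $\Phi$ surjective onto the space of admissible tangential data.

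First I would check the second-kind structure. In~\eqref{tngtds0} the principal, non-integral part is the algebraic term $\tfrac12(\mp\bm,\pm\bj)$, which under $\bm=-\star_2\bj$ reduces to a constant multiple of $(-\star_2\bj,\,-\bj)$; as $\star_2$ is invertible on 1-forms this part is injective, while relation~\eqref{eqn3.13.003} realizes the Hodge decomposition of $\bj$ into its exact, coexact, and harmonic pieces, so that $(r,q,\bj_H)\mapsto\bj$ is itself invertible for $k\neq0$ (the $k=0$ case being treated separately, exactly as in Lemma~\ref{lem3.2.002}). The remaining contributions are the integral operators $K_0,K_1,K_{2,t},K_3,K_4$ of Appendix~\ref{bdryops} together with the smoothing operator $R_0$ of Lemma~\ref{lem3.2.002}, all of which are compact on the appropriate spaces; hence $\Phi$ is a compact perturbation of an invertible operator and has index zero. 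This verification is where I expect the real work to lie: one must fix the correct scale of Sobolev spaces on $\partial D$ so that every $K_i$ is genuinely compact, and --- more delicately --- confirm that the harmonic degrees of freedom carried by $\bj_H$ are correctly matched against the cohomological ($H^1_{\dR}(\partial D)$) part of the tangential data, so that the harmonic block contributes no spurious index.

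Granting index zero, Lemma~\ref{lem3.2.002} supplies the trivial null space and hence the surjectivity of $\Phi$. To conclude, let $(\bxi,\bEta)$ be an arbitrary solution of~\eqref{eq:ME1} and~\eqref{eq:ME2} in $D$ and form its tangential data $(\bn\times\bE,\bn\times\bH)$. By surjectivity there is Debye source data $(r,q,\bj_H)$ whose represented field has precisely these tangential components. By the interior uniqueness theorem cited in the proof of Lemma~\ref{lem3.2.002} (Theorem~4.1 of~\cite{ColtonKress}, or Theorem~5.5.1 of~\cite{Nedelec}), a solution of the \THME[$k$] in $D$ is determined by its pair of tangential traces; hence the represented field coincides with $(\bxi,\bEta)$, which is thus represented by $(r,q,\bj_H)$.

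The unbounded case runs in parallel. The interior uniqueness theorem is replaced by the uniqueness of outgoing solutions with prescribed tangential data, which already underpins the exterior analysis of~\cite{EpGr,EpGr2}; moreover there are no interior resonances to avoid, so the device $\bm=-\star_2\bj$ is not forced here, though retaining it keeps the representation uniform across $k\in\bbC^+$. The exterior trace operator has the same second-kind, index-zero form, and the corresponding injectivity once more yields surjectivity onto the outgoing solutions, completing the argument.
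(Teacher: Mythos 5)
There is a genuine gap, and it lies exactly where you predicted ``the real work'' would be: the index-zero claim. For $k\neq 0$ the Hodge decomposition~\eqref{eqn3.13.003} identifies the Debye data $(r,q,\bj_H)$ with a \emph{single} tangential 1-form $\bj$ (two scalar functions plus $2p$ constants), while your chosen target --- the full pair $(\star_2\bxi^-_t,\star_2([\star_3\bEta^-]_t))$, i.e.\ $(\bn\times\bE,\bn\times\bH)$ --- consists of \emph{two} tangential 1-forms (four scalar functions). Your principal part, $\bj\mapsto\tfrac12(\pm\star_2\bj,\pm\bj)$, is indeed injective, but it is very far from invertible: its range is the graph of $\pm\star_2$, a closed subspace of infinite codimension in the pair space, and a compact perturbation of such an operator is never Fredholm of index zero onto the full target. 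So ``compact perturbation of an invertible operator'' fails, and the Fredholm alternative cannot be invoked. Retreating to ``the space of admissible tangential data'' does not repair this: that space is precisely the set of Cauchy data of actual interior solutions (the range of the Calder\'on projector), and proving that your map is Fredholm of index zero \emph{into that space} is essentially equivalent to the surjectivity statement you are trying to prove --- the argument becomes circular.

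Because of this mismatch, the paper works with only ``half'' of the boundary data: the hybrid system~\eqref{eqn3.17.001} (the surface divergence of the tangential electric component together with the normal magnetic component), augmented by the $2p$ cohomological conditions~\eqref{eqn3.18.001}, which balances two scalars plus $\bbC^{2p}$ against two scalars plus $\bbC^{2p}$ and genuinely yields $\Id+K$ with $K$ compact (after some careful work replacing the cycle integrals by $L^2$-bounded functionals so the theory runs at low regularity). But once one passes to half-data, a new difficulty appears that your proposal never confronts: at the interior resonances $\{k_l\}$ --- the real frequencies for which~\eqref{eqn20.005} admits nontrivial solutions with $\bxi^-_t=0$ --- the hybrid system has a nontrivial null-space of some dimension $n_l$, and the Fredholm alternative then only says the boundary system is solvable for data satisfying $n_l$ linear conditions; it does \emph{not} by itself show that every interior Maxwell field is represented. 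The paper closes this gap with a counting argument: injectivity of the Debye representation (Lemma~\ref{lem3.2.002}) gives $n_l\leq d_l$, where $d_l$ is the dimension of the Dirichlet eigenspace; the integration-by-parts identity~\eqref{eqn3.13.008} shows that the boundary value problem with data $\balpha_t$ is solvable exactly when $d_l$ independent conditions hold; comparing the two counts forces $n_l=d_l$ and yields surjectivity of the representation even at resonant frequencies. Your proof has no counterpart of this step (your only mention of resonances is to note their absence in the exterior case), and it would also need the paper's separate cohomological argument at $k=0$. These are the two missing ingredients: a correctly balanced (half-data) boundary system, and the resonance-frequency counting argument.
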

\begin{proof} 
  The unbounded case is done in~\cite{EpGr}.  To prove this in the
  bounded case for non-zero frequencies, we use a hybrid system of
  Fredholm equations of second kind analogous to equations (5.5-5.7)
  in~\cite{EpGr2}. Indeed, we use precisely the same set-up with the
  single change that $\bm=-\star_2\bj$ instead of
  $\bm=\star_2\bj$. We write this schematically as
\begin{equation}\label{eqn3.17.001}
  \cQ^-(k)\left(\begin{matrix} r\\q\\\bj_H\end{matrix}\right)\overset{d}{=}
\left(\begin{matrix} G_0 d_{\Gamma}^*\cT^{-}_{\bxi}(k;r,q,\bj_H)\\\cN^{-}_{\bEta}(k;r,q,\bj_H)\end{matrix}\right)=\left(\begin{matrix} f\\g\end{matrix}\right).
\end{equation}
If $\pa D$ has genus $p>0,$ then we append the cohomological
conditions:
\begin{equation}\label{eqn3.18.001}
\begin{split}
  \frac{1}{ik}\int\limits_{A_j}\star_2\cT^{-}_{\bxi}(k)&=
  -a_j ,\\
  \int\limits_{B_j}\star_2\cT^{-}_{\bxi}(k)&=-b_j,
\end{split}
\end{equation}
for $j = 1, \ldots p$. Here
$\{a_1,\dots,a_p;b_1,\dots,b_p\}\in\bbC^{2p}.$ The $A$-cycles $\{
A_j\}$ bound chains, $\{S_j\}$, contained in $D$ and the $B$-cycles
$\{ B_j\}$ bound chains, $\{T_j\}$, contained in $D^c$. Together
$\{A_j,B_j\}$ are a basis for $H_1(\pa D),$ which can
be taken to be formal sums of smooth, simple closed curves on
$\Gamma.$ As these conditions entail restriction to codimension 1
submanifolds, they are given by bounded functionals provided that
$\cT^{-}_{\bxi}(k)\in W^{s,2}(\Gamma)$ for an $s> 1/2.$ 
For bounded solutions to the \THME[$k$], as $k$ tends
to zero
\begin{equation}
\int\limits_{A_j}\star_2\cT^{-}_{\bxi}(k) \approx \mathcal O(k).
\end{equation}
For small $k$, it is instead useful to apply Stokes theorem
and~\eqref{eq:ME1} to replace the integrals over the $A$-cycles with
the $p$ conditions:
\begin{equation}\label{eqn3.22.002}
  \int\limits_{S_j}{\bEta}^-=-a_j.
\end{equation}

In~\cite{EpGr} it is shown that the operator in the center
of~\eqref{eqn3.17.001} is of the form $\Id+K,$ where $K$ is a system of
classical pseudodifferential operators of order $-1.$ The vector space
$\cH^1(\Gamma)\simeq \bbC^{2p}$ and so for any $s> \frac 12,$ the system of
equations defined by~\eqref{eqn3.17.001}--\eqref{eqn3.18.001} is of the form
$\Id+K,$ with $K$ compact, as a map from $W^{s,2}(\Gamma)\times
W^{s,2}(\Gamma)\times\bbC^{2p}$ to itself, and hence Fredholm of index
zero. Let $\cM_{\Gamma,0}$ denote pairs of functions of mean zero on $\Gamma$
and $\cM_{\Gamma,1}$ the $L^2$-closure of
\begin{equation}
  \{(G_0r,q):\: (r,q)\in\cM_{\Gamma,0}\},
\end{equation}
and set $\cM^s_{\Gamma,i}=\cM_{\Gamma,i}\cap W^{s,2}(\Gamma)\times W^{s,2}(\Gamma).$
In~\cite{EpGr} it is also shown that the operator in \eqref{eqn3.17.001} maps 
$\cM^s_{\Gamma,0}$ to $\cM^s_{\Gamma,1},$  for any $s\geq 0.$ From this it
follows easily that, provided $s>1/2,$  the combined system is again Fredholm of index zero as a
map from $\cM^s_{\Gamma,0}\times\bbC^{2p}$ to
$\cM^s_{\Gamma,1}\times\bbC^{2p}.$

The functionals in~\eqref{eqn3.18.001} can be replaced by functionals that
extend as bounded functionals on $L^2$ and are unchanged on closed 1-forms. The
simplest way to do this is to use a basis for $H_1(\Gamma;\bbZ)$ comprised of
smooth embedded, simple closed curves, $\{\gamma_i:\: i=1,\dots,2p\}.$ Each of
the cycles is then homologous to a sum of the form
\begin{equation}
  A_j\sim \sum_{i=1}^{2p}n_{A_j,i}\gamma_i \quad \text{ and }
\quad B_j\sim \sum_{i=1}^{2p}n_{B_j,i}\gamma_i .
\end{equation}
For $j = 1, \ldots, p$, the equations in~\eqref{eqn3.18.001} can then
be replaced with
\begin{equation}\label{eqn3.18.002}
\begin{split}
 \frac{1}{ik}\sum_{i=1}^{2p}n_{A_j,i}
 \int\limits_{\gamma_i}\star_2\cT^{-}_{\bxi}(k)&=
 -a_j\\ \sum_{i=1}^{2p}n_{B_j,i}\int\limits_{\gamma_i}\star_2\cT^{-}_{\bxi}(k)&=-b_j.
\end{split}
\end{equation}

Since $\gamma_i$ is a smoothly embedded simple closed curve in
$\Gamma,$ the tubular neighborhood theorem implies that there is a
smooth family of simple, closed curves $\{\gamma_i^\sigma:\: \sigma\in
[-1,1]\}\subset \Gamma,$ so that:
\begin{enumerate}
\item $\gamma_i=\gamma_i^0.$
\item $\gamma_i^\sigma\cap\gamma_i^{\sigma'}=\emptyset,$ if
  $\sigma\neq \sigma'.$
\item $\cup_{\sigma}\gamma_i^\sigma=U_i$ an open subset of $\Gamma.$
\end{enumerate}
We can replace the integrals over $\gamma_i$ appearing
in~\eqref{eqn3.18.002} with
\begin{equation}
  \frac{1}{2}\int\limits_{-1}^{1}
  \int\limits_{\gamma^{\sigma}_i}\star_2\cT^{-}_{\bxi}(k) \, d\sigma.
\end{equation}
Since the curves $\{\gamma^\sigma_i\}$ are all homologous it is clear
that if $\balpha$ is a closed 1-form then
\begin{equation}
  \frac{1}{2}\int\limits_{-1}^{1}
  \int\limits_{\gamma^{\sigma}_i}\balpha \, d\sigma=
  \int\limits_{\gamma_i}\balpha.
\end{equation}
From the properties of the family of curves it follows that there is a bounded
1-form $\omega_i$ supported in $U_i,$ so that
\begin{equation}
  \frac{1}{2}\int\limits_{-1}^{1}
  \int\limits_{\gamma^{\sigma}_i}\balpha \, d\sigma=
  \int\limits_{\Gamma}\balpha\wedge\star_2\omega_i
\end{equation}
and this therefore is an $L^2$-bounded functional. Replacing the conditions
in~\eqref{eqn3.18.001} with the conditions
\begin{equation}\label{eqn3.18.003}
\begin{split}
 \frac{1}{ik}\sum_{i=1}^{2p}n_{A_j,i}
 \int\limits_{\Gamma}\cT^{-}_{\bxi}(k)\wedge\omega_i&=
 -a_j\\ \sum_{i=1}^{2p}n_{B_j,i}\int\limits_{\Gamma}\cT^{-}_{\bxi}(k)\wedge\omega_i&=-b_j,
\end{split}
\end{equation}
we can extend the analysis of this system to data in $W^{s,2}(\Gamma)$ for
$0\leq s\leq 1/2.$ Thus for $0\leq s,$ the equations \eqref{eqn3.17.001},
\eqref{eqn3.18.003} define a Fredholm map of index zero from
$\cM^s_{\Gamma,0}\times\bbC^{2p}$ to $\cM^s_{\Gamma,1}\times\bbC^{2p}.$

As in our earlier work, if the non-trivial data $(r,q,\bj_H)$ belongs to the
null-space of this system, then it follows that $\bxi^-_t$ is a topologically
trivially harmonic form and therefore $0.$ On the other hand,
Lemma~\ref{lem3.2.002} implies that the solution $(\bxi^-,\bEta^-)$ is
non-trivial. Hence $\bxi^-_t=0$ only arises if $\{k_l\}$ is in the spectrum of
the Maxwell equations in $D$ with the Dirichlet conditions on $\bxi^-$,
i.e. $\bxi^-_t=0$.  The surjectivity for $k\notin\{k_l\}$ follows from the
Fredholm alternative.

Let $\{k_l\}\subset\bbR\setminus\{0\}$ denote the non-zero
exceptional wave numbers for which there exist non-trivial solutions
to
\begin{equation}\label{eqn20.005}
  d\bxi^-=ik_l\bEta^-, \qquad d^*\bEta^-=-ik_l\bxi^-,
\end{equation}
with $\bxi^-_t=0$. Let $d_l$ denote the dimension of the eigenspace
corresponding to $k_l,$ and $n_l$ the dimension of the null-space of
the hybrid system,~\eqref{eqn3.17.001}--~\eqref{eqn3.18.001}.  Clearly
the injectivity of the Debye representation shows that $n_l\leq
d_l$. Suppose that $\{(\bxi_m,\bEta_m):\:1\leq m\leq d_l\}$ is a basis
for this eigenspace. If
\begin{equation}
-id\balpha=k_l\bBeta \quad \text{ and } \quad  id^*\bBeta=k_l\balpha,
\end{equation}
then integrating by parts shows that
\begin{equation}\label{eqn3.13.008}
  \int\limits_{\partial
    D}\balpha\wedge\star_3\overline{\bEta_m}=0\quad \text{ for }1\leq m\leq
  d_l.
\end{equation}
For details, see~\eqref{eqn3.32.002} later on. Since the tangent components
$\bxi_{mt}=0$, Theorem 4.1 in~\cite{ColtonKress} again implies that the 1-forms
$\{[\star_3\bEta_m]_t:\:1\leq m\leq d_l\}$ are linearly independent.
Thus~\eqref{eqn3.13.008} constitutes $d_l$ independent conditions that are
necessary and sufficient for equation~\eqref{eqn20.005}, with the boundary
condition $\bxi^-_t=\balpha_t$, to be solvable. On the other hand, the hybrid
system is solvable if and only if $\balpha_t$ satisfies $n_l$ linear
conditions. This shows that $n_l=d_l$, and that the Debye source representation
is surjective in this case as well.

Suppose that for $k=0$ we have non-trivial data $(r,q,\bj_H)$ in the
null-space of the operator in~\eqref{eqn3.17.001} for which the
integrals over the $B$-cycles in~\eqref{eqn3.18.001}, along with those
in~\eqref{eqn3.22.002}, vanish. In this case $\bxi^-_t$ would be a
topologically trivial harmonic 1-form on $\partial D$, which must
therefore vanish. Hence $\bxi^-=0$ as well. The harmonic 2-form
$\bEta^-$ would vanish on the boundary and represent the trivial class
in $H^2(D,\partial D)$, which implies that it too must vanish. Thus
the field defined by $(r,q,\bj_H)$ would vanish in $D$ and this
contradicts Lemma~\ref{lem3.2.002}.
\end{proof}

These results easily extend to bounded, connected domains $D$
whose boundary $\Gamma = \partial D$ has more than one
component. There is a unique component $\Gamma_0 \subset \Gamma$ that
is also the boundary of the unbounded component of $D^c$. We let
$\{\Gamma_1,\dots,\Gamma_d\}$ denote the other components of $\Gamma$,
each of which is the boundary of a connected component of $D^c$. We
let $E^-_n$ denote the real \emph{exceptional} frequencies for which
there are $k$-Neumann fields for which the restriction $\bxi^-_t$ of
$\bxi^-$ to each component of $\Gamma$ is topologically trivial. By
$k$-Neumann fields, we mean fields that satisfy the \THME[$k$] with
vanishing normal components on the boundary, see~\cite{EpGr}.

To represent solutions to \THME[$k$] in $D$ we use Debye sources
\begin{equation}
  \cS(D)=\{(r_l,q_l,\bj_{Hl}):\: l=0,\dots,d\}\subset
  \prod\limits_{l=0}^{d}\cC^0(\Gamma_l)\oplus
  \cC^0(\Gamma_l)\oplus\cH^1(\Gamma_l)
\end{equation}
on the boundary components. The scalar sources are assumed to have mean
zero on each component of $\Gamma$. As usual, we use these sources
to define 1-forms $\{\bj_l,\bm_l\}$ via equation~\eqref{eqn53}, enforcing
the relations
\begin{equation}\label{eqn3.21.001}
\begin{split}
  \bm_0&=-\star_2\bj_0,\\
  \bm_l&=\star_2\bj_l,
\end{split}
\end{equation}
for $1\leq l\leq d$. Hence, for $l=1,\dots,d$,
\begin{equation}
  \begin{split}\label{eqn3.15.002}
    \bj_0&=ik(d_{\Gamma_0}R_{00}r_0-\star_2d_{\Gamma_0}R_{00}q_0)+\bj_{H0},\\
\bj_l&=ik(d_{\Gamma_l}R_{l0}r_l+\star_2d_{\Gamma_l}R_{l0}q_l)+\bj_{Hl},
  \end{split}
\end{equation}
Using such data it is easy to extend the uniqueness result from
Lemma~\ref{lem3.2.002} to cover the case where $\Gamma$ has
multiple components; surjectivity follows directly as well.
\begin{theorem}\label{prop3.5.01}
If $D$ is a bounded connected region in $\bbR^3$ with smooth boundary
$\Gamma=\Gamma_0\cup\Gamma_1\cup\cdots\cup\Gamma_d$, then, for $k\in
\mathbb C^+$, all solutions to \THME[$k$] in $D$ are uniquely
represented by data in $\cS(D)$.
 \end{theorem}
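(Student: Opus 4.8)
The plan is to run the proofs of Lemma~\ref{lem3.2.002} and Proposition~\ref{prop3.1} componentwise, letting the sign conventions~\eqref{eqn3.21.001} carry the geometric content. As in the single-component case, the combined Debye map is assembled from operators of the form $\Id+K$ with $K$ compact, now acting on the direct sum $\bigoplus_{l=0}^{d}\cM^s_{\Gamma_l,0}$ together with the appended $2p$-dimensional cohomological block from~\eqref{eqn3.18.001} (with $p$ the total genus of $\Gamma$); hence the system is Fredholm of index zero and it suffices to establish injectivity of the representation, surjectivity then following from the Fredholm alternative off the exceptional set $E^-_n$ and from a dimension count on $E^-_n$, exactly as in Proposition~\ref{prop3.1}.

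For injectivity I would suppose that data $\{(r_l,q_l,\bj_{Hl})\}_{l=0}^{d}\in\cS(D)$ represents the zero field in $D$. Then the tangential traces $\bxi^-_t$ and $[\star_3\bEta^-]_t$ vanish on every $\Gamma_l$, and the interior uniqueness theorem (Theorem 4.1 of~\cite{ColtonKress}, or Theorem 5.5.1 of~\cite{Nedelec}) gives $(\bxi^-,\bEta^-)\equiv 0$ in $D$. As in~\eqref{eqn3.14.03} the jump relations then express the $D^c$-side limits on each $\Gamma_l$ in terms of $\bj_l$ alone. The essential point is that $D^c$ now splits into the unbounded component with boundary $\Gamma_0$ together with bounded components $\Omega_1,\dots,\Omega_d$ with boundaries $\Gamma_1,\dots,\Gamma_d$, and these two families of boundaries carry opposite induced orientations.

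On $\Gamma_0$ I would apply Lemma~\ref{lem3.2.002} verbatim: the outgoing identity~\eqref{eqn3.15.03} has a manifestly nonnegative left-hand side, while the choice $\bm_0=-\star_2\bj_0$ turns its right-hand side into $-2|k|^2\int_{\Gamma_0}\bj_0\wedge\star_2\overline{\bj_0}\le 0$, forcing $\bj_0=0$; the mean-zero property of $(r_0,q_0)$ and~\eqref{eqn3.15.002} then give $r_0=q_0=0$ and $\bj_{H0}=0$, and by the radiation condition the whole unbounded-component field vanishes. On each bounded $\Omega_l$ the represented field solves~\eqref{eq:ME1} in a genuine bounded domain, so I would use the same Green identity but with no sphere-at-infinity term; here the outward normal of $\Omega_l$ is the \emph{inward} normal of $D$, and it is precisely this reversal that is balanced by the opposite choice $\bm_l=+\star_2\bj_l$, so that the boundary term again acquires the correct definite sign. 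For $\Im k>0$ the volume term is then strictly positive and forces $\bj_l=0$, whence $r_l=q_l=0$ and $\bj_{Hl}=0$ as before.

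The main obstacle is the real-frequency behaviour on the bounded pieces: when $\Im k=0$ the volume term in the bounded-domain identity vanishes and the remaining boundary term is purely real, so the energy identity degenerates and no longer forces $\bj_l=0$. This degeneracy is exactly the source of the exceptional set $E^-_n$. Off $E^-_n$ there are by definition no nontrivial $k$-Neumann fields whose tangential trace is topologically trivial on each component, so the representation stays injective and surjectivity is immediate from the Fredholm alternative; at $k=0$ one instead argues, as in the second half of Lemma~\ref{lem3.2.002}, that the vanishing topologically trivial traces lie in the complexification of a real Lagrangian subspace of $H^1_{\dR}(\pa D)$ and that~\eqref{nrmds0} reduces, component by component, to second-kind equations which kill mean-zero $r_l,q_l$. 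Finally, for $k\in E^-_n$ surjectivity is recovered by transcribing the dimension count of Proposition~\ref{prop3.1} componentwise: the integration-by-parts identity~\eqref{eqn3.13.008}, the linear independence of the traces $[\star_3\bEta_m]_t$ supplied by Theorem 4.1 of~\cite{ColtonKress}, and the resulting equality $n_l=d_l$ of null-space and eigenspace dimensions all carry over unchanged. The only genuinely new bookkeeping is verifying that the combined operator remains $\Id+(\text{compact})$ on $\bigoplus_{l}\cM^s_{\Gamma_l,0}\times\bbC^{2p}$ and that the $L^2$-bounded replacements~\eqref{eqn3.18.003} of the cohomological functionals go through over each handle of $\Gamma$; neither presents a real difficulty.
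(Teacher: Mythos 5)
Your overall architecture (the Fredholm setup, the $\Gamma_0$ argument via Lemma~\ref{lem3.2.002}, the $k=0$ reduction, and the dimension count at exceptional frequencies) follows the paper, but your treatment of the bounded components of $D^c$ contains a genuine error, and it breaks the proof exactly where the theorem has content. You claim the Green identity on a bounded component $D_l$ forces $\bj_l=0$ only when $\Im k>0$, and that at real $k$ ``the volume term vanishes and the remaining boundary term is purely real, so the energy identity degenerates.'' You have drawn the wrong conclusion from your own observation. The bounded-domain analogue of~\eqref{eqn3.15.03} reads
\begin{equation*}
2\Im(k)\int\limits_{D_l}\bigl[\|d\bxi^l\|^2+|k|^2\|\bxi^l\|^2\bigr]dV
=-2\Im\left[k\int\limits_{\Gamma_l}\bxi^l\wedge\star_2 i_{\bn}\overline{d\bxi^l}\right],
\end{equation*}
and with the jump data $(\bxi^l_t,i_{\bn}\bEta^l)=(\bj_l,\bj_l)$, i.e. $i_{\bn}d\bxi^l=ik\bj_l$, the right-hand side equals $-2|k|^2\int_{\Gamma_l}\bj_l\wedge\star_2\overline{\bj_l}=-2|k|^2\|\bj_l\|^2_{L^2(\Gamma_l)}$, the sign being fixed by the orientation of $\Gamma_l=\partial D_l$ together with the choice $\bm_l=+\star_2\bj_l$ (the cancellation you yourself described). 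The left side is $\geq 0$ for \emph{every} $k\in\bbC^+$ and the right side is $\leq 0$, so $\bj_l=0$ for every $k\in\bbC^+\setminus\{0\}$. Far from being degenerate, the real-$k$ case is the easiest one: the identity collapses to $0=-2|k|^2\|\bj_l\|^2$, which kills $\bj_l$ outright. (The field $(\bxi^l,\bEta^l)$ itself may survive at interior eigenvalues, but that is irrelevant: $\bj_l=0$ already forces $r_l=q_l=0$ and $\bj_{Hl}=0$.) This impedance-type uniqueness, valid uniformly in $k$, is precisely the ``argument of Theorem 7.1 in~\cite{EpGr}'' that the paper invokes for the bounded components.

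Because you abandon injectivity at real frequencies and only recover it ``off $E^-_n$,'' your argument does not prove the theorem as stated: the claim is unique representability for \emph{all} $k\in\bbC^+$, real exceptional frequencies included. The damage propagates to your surjectivity step as well: the dimension count $n_l=d_l$ transcribed from Proposition~\ref{prop3.1} begins with the inequality $n_l\leq d_l$, which \emph{is} injectivity of the Debye representation at those (real) frequencies --- the very thing your argument no longer supplies. You have also conflated two distinct exceptional sets: $E^-_n$ consists of $k$-Neumann resonances (fields with vanishing \emph{normal} components and topologically trivial tangential traces), whereas the null space of the tangential Fredholm system in Proposition~\ref{prop3.1} is governed by the interior Maxwell Dirichlet spectrum $\{k_l\}$ (fields with $\bxi^-_t=0$). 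Neither set has any bearing on injectivity of the representation; the whole point of the paper's strong uniqueness result is that injectivity holds unconditionally, and only the solvability of particular boundary systems is sensitive to exceptional frequencies.
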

\begin{remark} Briefly, for any $k\in
\mathbb C^+$, the Debye representation of $\cM_k(D)$ using the data in
$\cS(D)$ and the relations~\eqref{eqn3.21.001} is one-to-one and onto.
\end{remark}
 \begin{proof} The uniqueness result is all that is really needed as we can
   give Fredholm equations of second kind on $\Gamma$ for both $\bxi_t$ and
   $[\star_3\bEta]_t$ in terms of data in $\cS(D)$, and use the argument from
   the proof of Lemma~\ref{lem3.2.002}. This argument is essentially
   identical to that given to prove Theorem 3.4 in~\cite{EpGr2}. We suppose
   that there is data in $\{(r_l,q_l,\bj_{Hl}):\: l=0,\dots,d\}\in\cS(D)$ so
   that the solution, $(\bxi,\bEta)$, specified in $\Gamma^c$ by this data
   vanishes in $D$. We let
  \begin{equation}
    D^c=D_0 \cup D_1\cup\cdots\cup D_d,
  \end{equation}
with $D_0$ the unbounded component and $\Gamma_l=\partial D_l$ for
$l=1,\dots,d$.

We let $(\bxi^0,\bEta^0)$ denote the solution defined by this data in $D_0$ and
$(\bxi^l,\bEta^l)$ the solution in $D_l$. The tangential boundary data for
these solutions in the components of $D^c$ are determined by jump conditions,
and therefore:
\begin{equation}
(\bxi^0_t,i_{\bn}\bEta^0)=(\bj_0,-\bj_0)\quad \text{ and }\quad
  (\bxi^l_t,i_{\bn}\bEta^l)=(\bj_l,\bj_l),
\end{equation}
for $l=1,\dots,d$.  Using the proof of Lemma~\ref{lem3.2.002} we
deduce that $\bj_0=0$, and therefore $r_0=q_0=\bj_{H0}=0$. Using the
argument used to prove Theorem~7.1 in~\cite{EpGr}, we also conclude
that $\bj_l=0$ for $l=1,\dots,d$. This completes the proof of the
uniqueness statement. Surjectivity is proved using the same argument
as above to establish this property when $\partial D$ is connected.
The $k=0$ case follows by combining the argument at the end of the
proof of Lemma~\ref{lem3.2.002} with the proof of Theorem 5.2
in~\cite{EpGr2}.
\end{proof}

\section{The Complex Structure}\label{sec4}
Let $\Omega$ denote a domain in $\bbR^3$ (bounded or unbounded, for
now) with boundary $\Gamma$, and let $k(x)$ be a function defined
in $\Omega$ taking values in $\bbC^+\setminus \{0\}$. For most
applications we take $k(x)$ to be locally constant in the connected
components of $\Omega$, but the first results in this section do not
require any such assumption.

Let $\alpha$ and $\beta$ be non-zero complex constants and
$\cM_{k}(\Omega)$ denote the set of all solutions, defined in
$\Omega$, to the system of equations
\begin{equation}\label{eqn1.001}
  d\bxi = i\alpha k(x)\bEta,\quad d^*\bEta=-i\beta k(x)\bxi.
\end{equation}
We define a map on $\cM_{k}(\Omega)$ by setting
\begin{equation}
  \cJ(\bxi,\bEta)=\left(-\sqrt{\frac{\alpha}{\beta}}\star_3\bEta,\sqrt{\frac{\beta}{\alpha}}\star_3\bxi\right).
\end{equation}
It is an easy calculation to check that if $(\bxi,\bEta)$
solves~\eqref{eqn1.001}, then $\cJ(\bxi,\bEta)$ does as well. Since
the Hodge star-operator satisfies $\star_3^2=\Id$, a further
calculation shows that
\begin{equation}
  \cJ^2(\bxi,\bEta)=-(\bxi,\bEta).
\end{equation}
We summarize this as follows:
\begin{proposition} The map $\cJ$ defines a complex structure on the complex
  vector space $\cM_{k}(\Omega)$.
\end{proposition}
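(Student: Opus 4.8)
The plan is to verify the three defining properties of a complex structure in turn: that $\cJ$ maps $\cM_{k}(\Omega)$ into itself, that $\cJ$ is linear, and that $\cJ^2=-\Id$. Linearity is immediate, since $\star_3$ is linear and the scalar factors $-\sqrt{\alpha/\beta}$ and $\sqrt{\beta/\alpha}$ are constants; the real content lies in the other two properties.

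The main work is showing that $\cJ$ preserves solutions. The key observation is that, since $\Omega\subset\bbR^3$ carries the Euclidean metric, on $2$-forms the codifferential is $d^*=\star_3 d\star_3$, while $\star_3^2=\Id$ in every degree. First I would recast the system~\eqref{eqn1.001} in a symmetric form adapted to $\cJ$. Applying $\star_3$ to $d^*\bEta=-i\beta k\bxi$ and to $d\bxi=i\alpha k\bEta$ and using these two identities (note that $\star_3\bEta$ is a $1$-form and $\star_3\bxi$ a $2$-form, so the degrees match those of $\bxi,\bEta$), the system becomes
\begin{equation*}
 d\star_3\bEta=-i\beta k\star_3\bxi,\qquad d^*\star_3\bxi=i\alpha k\star_3\bEta.
\end{equation*}
Writing $(\bxi',\bEta')=\cJ(\bxi,\bEta)=(-\sqrt{\alpha/\beta}\,\star_3\bEta,\ \sqrt{\beta/\alpha}\,\star_3\bxi)$, one substitutes directly: $d\bxi'=-\sqrt{\alpha/\beta}\,d\star_3\bEta=i\sqrt{\alpha\beta}\,k\star_3\bxi=i\alpha k\bEta'$, and likewise $d^*\bEta'=\sqrt{\beta/\alpha}\,d^*\star_3\bxi=i\sqrt{\alpha\beta}\,k\star_3\bEta=-i\beta k\bxi'$, so $(\bxi',\bEta')\in\cM_{k}(\Omega)$. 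It is worth emphasizing that this works even when $k(x)$ is non-constant, because the Hodge identities are purely metric and $k$ is never differentiated; it is simply carried along when the PDE is substituted.

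For $\cJ^2=-\Id$ I would apply $\cJ$ twice and collect constants: the first slot of $\cJ^2(\bxi,\bEta)$ is $-\sqrt{\alpha/\beta}\,\star_3(\sqrt{\beta/\alpha}\,\star_3\bxi)=-\star_3^2\bxi=-\bxi$, and the second slot is $\sqrt{\beta/\alpha}\,\star_3(-\sqrt{\alpha/\beta}\,\star_3\bEta)=-\star_3^2\bEta=-\bEta$, using $\star_3^2=\Id$ together with $\sqrt{\alpha/\beta}\cdot\sqrt{\beta/\alpha}=1$. Hence $\cJ^2=-\Id$, and a linear endomorphism squaring to $-\Id$ is by definition a complex structure (equivalently, it splits $\cM_{k}(\Omega)$ into its $\pm i$ eigenspaces, as is exploited later in the paper).

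The only genuine obstacle is bookkeeping: pinning down the correct sign in the codifferential formula $d^*=\pm\star_3 d\star_3$ for each form-degree involved, and tracking the square-root factors $\sqrt{\alpha/\beta}$ and $\sqrt{\beta/\alpha}$ consistently so that they combine to $\sqrt{\alpha\beta}$ in the preservation check and to $1$ in the squaring check. With the conventions fixed earlier in the paper — which already require the case $\alpha=\beta=1$, $\cJ(\bxi,\bEta)=(-\star_3\bEta,\star_3\bxi)$, to map solutions to solutions — the $+$ sign is correct on $2$-forms in $\bbR^3$, and everything closes up.
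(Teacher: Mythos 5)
Your proof is correct and takes the same route as the paper, whose entire argument is the assertion that these are ``easy calculations'': direct verification that $\cJ$ maps solutions of~\eqref{eqn1.001} to solutions, and that $\cJ^2=-\Id$ via $\star_3^2=\Id$. The details you supply --- the sign $d^*=+\star_3 d\star_3$ on $2$-forms in $\bbR^3$, the Hodge-dual rewriting of the system, the cancellation of the factors $\sqrt{\alpha/\beta}$ and $\sqrt{\beta/\alpha}$, and the observation that $k(x)$ is never differentiated so non-constant $k$ causes no trouble --- are exactly the calculations the paper leaves to the reader, and they are all carried out correctly.
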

\begin{remark} This observation appears, in passing,  in Section 4.4
  of~\cite{ColtonKress}.
\end{remark}

It is clear from its definition that the map
$\cJ:\cM_{k}(\Omega)\to\cM_{k}(\Omega)$ is complex linear. In
light of that, it defines two projection operators
\begin{equation}
\begin{split}
\cP^{1,0}(\bxi,\bEta)
&=\frac{1}{2}\left[(\bxi,\bEta)-i\cJ(\bxi,\bEta)\right], \\ 
\cP^{0,1}(\bxi,\bEta)
&=\frac{1}{2}\left[(\bxi,\bEta)+i\cJ(\bxi,\bEta)\right].
\end{split}
\end{equation}
We denote the image of $\cM_{k}(\Omega)$ under these projections by
$\cM_{k}^{1,0}(\Omega)$ and $\cM_{k}^{0,1}(\Omega)$
respectively. Another easy calculation demonstrates the following
proposition.
\begin{proposition} The subspace $\cM_{k}^{1,0}(\Omega)$ of
  $\cM_{k}(\Omega)$ is $+i$ eigenspace of $\cJ$ and
  $\cM_{k}^{0,1}(\Omega)$ is the $-i$ eigenspace of $\cJ$.
\end{proposition}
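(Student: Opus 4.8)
The plan is to derive everything from the single relation $\cJ^2=-\Id$ established above, together with the explicit formulas for $\cP^{1,0}$ and $\cP^{0,1}$; the statement is nothing more than the spectral decomposition of a complex-linear operator whose square is $-\Id$, so no analysis of the Maxwell system itself is needed. First I would compute the action of $\cJ$ on a vector lying in the image of $\cP^{1,0}$. Applying $\cJ$ to the defining formula and invoking $\cJ^2(\bxi,\bEta)=-(\bxi,\bEta)$ gives
\begin{equation*}
  \cJ\cP^{1,0}(\bxi,\bEta)=\frac{1}{2}\left[\cJ(\bxi,\bEta)-i\cJ^2(\bxi,\bEta)\right]
  =\frac{1}{2}\left[\cJ(\bxi,\bEta)+i(\bxi,\bEta)\right]=i\,\cP^{1,0}(\bxi,\bEta),
\end{equation*}
so every element of $\cM_{k}^{1,0}(\Omega)$ is a $+i$ eigenvector of $\cJ$. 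The identical computation, with the sign of $i$ reversed, shows that every element of $\cM_{k}^{0,1}(\Omega)$ is a $-i$ eigenvector.

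For the reverse inclusions I would run the argument backwards. If $(\bxi,\bEta)\in\cM_{k}(\Omega)$ satisfies $\cJ(\bxi,\bEta)=i(\bxi,\bEta)$, then substituting into the formula for $\cP^{1,0}$ yields
\begin{equation*}
  \cP^{1,0}(\bxi,\bEta)=\frac{1}{2}\left[(\bxi,\bEta)-i\cdot i(\bxi,\bEta)\right]=(\bxi,\bEta),
\end{equation*}
which exhibits $(\bxi,\bEta)$ as an element of the image of $\cP^{1,0}$, hence of $\cM_{k}^{1,0}(\Omega)$; the analogous computation handles the $-i$ eigenspace. Combining the two inclusions identifies $\cM_{k}^{1,0}(\Omega)$ with the full $+i$ eigenspace and $\cM_{k}^{0,1}(\Omega)$ with the full $-i$ eigenspace.

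Finally, I would note for completeness that $\cP^{1,0}$ and $\cP^{0,1}$ are genuinely complementary projections: a further use of $\cJ^2=-\Id$ shows that $\left(\cP^{1,0}\right)^2=\cP^{1,0}$, $\left(\cP^{0,1}\right)^2=\cP^{0,1}$, $\cP^{1,0}+\cP^{0,1}=\Id$, and $\cP^{1,0}\cP^{0,1}=0$. This confirms that the two eigenspaces together span $\cM_{k}(\Omega)$ and intersect only in $0$, so the identification of each image with the corresponding eigenspace is unambiguous. There is no real obstacle here, as the whole proposition is elementary linear algebra; the only point requiring care is to state the double inclusion explicitly, rather than merely checking that the images consist of eigenvectors.
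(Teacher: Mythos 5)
Your proof is correct and is precisely the ``easy calculation'' the paper alludes to: the identities $\cJ\cP^{1,0}=i\cP^{1,0}$, $\cJ\cP^{0,1}=-i\cP^{0,1}$, and the reverse inclusions all follow from $\cJ^2=-\Id$ together with the complex linearity of $\cJ$, which the paper establishes just before defining the projections. Writing out the double inclusion and the complementarity of the projections is exactly the intended argument, only made explicit.
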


If $(\bxi,\bEta)\in\cM_{k}^{1,0}(\Omega)$, then
$\bEta=-i\sqrt{\frac{\beta}{\alpha}}\star_3\bxi$, and therefore $\bxi$
satisfies the Beltrami equation:
\begin{equation}\label{eqn2.5.01}
  d\bxi=\sqrt{\alpha\beta}k(x)\star_3\bxi.
\end{equation}
If $(\bxi,\bEta)\in\cM_{k}^{0,1}(\Omega)$, then we have that
\begin{equation}
  d\bxi=-\sqrt{\alpha\beta}k(x)\star_3\bxi,
\end{equation}
hence the space $\cM_{k}(\Omega)$ can be decomposed as a direct sum
of solutions to these two Beltrami equations.

\subsection{The Tangent Map}
The space $\cM_{k}(\Omega)$ is an infinite dimensional vector space
whose dependence on the function $k$ is complicated. It is often the
case that an element of this vector space is uniquely determined by
either $\bxi_t=\bxi\restrictedto_{T\partial\Omega}$ or
$[\star_3\bEta]_t=\star_3\bEta\restrictedto_{T\partial\Omega}$. Moreover,
this data can be freely specified to be any tangent 1-form on
$\partial\Omega$. In this case, the operator $\cJ$ defines a complex
structure on the fixed vector space of $1$-forms on $\Gamma$,
$\cC^0(\Gamma;\Lambda^1)$, which depends on the coefficient function
$k(x)$.  On the tangential boundary data, this operator, $\cJ^b$, is
given by
\begin{equation}
  \cJ^b(\bxi_t)=-\sqrt{\frac{\alpha}{\beta}}[\star_3\bEta]_t.
\end{equation}

This operator is the analogue for the time-harmonic Maxwell equations of the
Dirichlet-to-Neumann map for a scalar elliptic equation. The remaining Cauchy
data consists of the normal components of $\bxi$ and $\bEta$, which in this
formulation are just restrictions of the 2-forms to the boundary:
\begin{equation}
  \star_3\bxi\restrictedto_{\partial\Omega} \quad \text{ and } \quad
  \bEta\restrictedto_{\partial\Omega}.
 \end{equation}
Using equation~\eqref{eqn1.001} these quantities can be determined
from $\bxi_t$ and $[\star_3\bEta]_t$:
  \begin{equation}\label{eqn2.9.03}
     \star_3\bxi\restrictedto_{\partial\Omega}=-\frac{d_{\partial\Omega}[\star_3\bEta]_t}{i\beta
       k(x)} \quad \text{ and } \quad \bEta\restrictedto_{\partial\Omega}=
     \frac{d_{\partial\Omega}\bxi_t}{i\alpha k(x)}.
  \end{equation}
From these relations it is clear why $\cJ^b$ should be understood as the
analogue of the Dirichlet-to-Neumann map.

As the dependence of $\cJ^b$ on $k$ is now quite important we denote
this operator on $\cC^0(\partial\Omega;\Lambda^1)$ by $\cJ^b_{k}$. That
$[\cJ_{k}^b]^2=-\Id$ is far from obvious, though nonetheless true. The
dependence of $\cJ_{k}^b$ on $k$ is quite complicated. As $k$ tends to
zero the family of operators diverges somewhat like the matrices
\begin{equation}
  \left(\begin{matrix} 0& -\frac{1}{k}\\ k & 0\end{matrix}\right)\quad k\in\bbC\setminus\{0\}.
\end{equation}
In another paper we will study the behavior of this family of operators as the
constant function $k$ tends to zero and infinity in the closed upper half
plane. For now we observe that if $\bxi_t$ uniquely determines the solution
to~\eqref{eqn1.001}, then the condition
\begin{equation}
  \cJ_k^b(\bxi_t)=\pm i\bxi_t
\end{equation}
holds if and only if
\begin{equation}
  \bEta=\mp i\sqrt{\frac{\beta}{\alpha}}\star_3\bxi.
\end{equation}
The former condition is an immediate consequence of the latter. On the other
hand if the boundary condition holds, then $(\bxi,\bEta)$ and $\mp
i\cJ(\bxi,\bEta)$ are solutions to~\eqref{eqn1.001} with the same tangential
boundary data. By uniqueness they must agree throughout $\Omega$.

\subsection{$k$-Neumann Fields and Beltrami Fields}\label{ss2.2}

Let us first examine the case in which $\Omega$ is an unbounded
region.  If $\Omega$ is the complement of a bounded region $D$, and
$k(x)$ is constant outside of a compact set, $K$, then $d^*\bxi=0$ in
$K^c\cap\Omega$ and we can use~\eqref{eq:radcond} to define the subset
of outgoing solutions to~\eqref{eqn1.001}. We denote this set by
$\cM^{\Out}_{k}(\Omega)$. From the definition of $\cJ$ it is easy
to see that $\cJ$ maps $\cM^{\Out}_{k}(\Omega)$ to itself.

In applications to fluid mechanics and plasma physics
\cite{bauer,hudson,hudson2,marsh} a particular subspace of solutions
to~\eqref{eqn2.5.01} plays an important role: those solutions with
vanishing normal components. If we let $\bn$ denote the unit outward
normal vector field along $\Gamma$, then in the exterior form
representation this condition is expressed by
\begin{equation}\label{eqn2.7.01}
  i_{\bn}\bxi=0 \quad \text{ and } \quad i_{\bn}\star_3\bEta=0.
\end{equation}
As their normal components vanish, these fields exert no outward force
on the boundary, and are therefore called \emph{force-free} fields.
Once again it follows easily from the definition that $\cJ$ preserves
this condition. In this case, equation~\eqref{eq:ME1} implies that the
tangent components of these fields $(\bxi_t,[\star_3\bEta]_t)$ are
closed forms on the boundary $\partial\Omega$. We say that a
$k$-Neumann field is \emph{topologically trivial} if $\bxi_t$ defines
the trivial class in $H^1_{\dR}(\Omega)$. One could also say that a
$k$-Neumann field is topologically trivial if $[\star_3\bEta]_t$
defines the trivial class in $H^1_{\dR}(\partial\Omega)$, but for the
moment we just consider the $\bxi$-component. In
Section~\ref{subsec7.4} we consider these conditions in a broader
context. In the case $\Omega$ is an unbounded region, and $k$ is a
constant, then it is proved in~\cite{EpGr} that an outgoing
$k$-Neumann field for which either component is topologically trivial
is automatically zero.

If $D$ is a bounded domain, or if we restrict attention to outgoing
solutions in $\Omega$, then it is known that the space of solutions
to~\eqref{eqn1.001} that satisfy the boundary
condition~\eqref{eqn2.7.01} is finite dimensional. In the case that
$k(x)$ is piecewise constant these solutions are called $k$-Neumann
fields \cite{EpGr}, which we denote by $\cN_k(D)$, $\cN_k(\Omega)$,
respectively. Since $\cJ$ preserves these subspaces, it again defines a
complex structure and so these vector spaces split into the eigenspaces
of $\cJ_k:$
\begin{equation}\label{eqn2.14.001}
\begin{split}
  \cN_k(\Omega)&=\cN_k^{1,0}(\Omega)\oplus \cN_k^{0,1}(\Omega),\\
  \cN_k(D)&=\cN_k^{1,0}(D)\oplus \cN_k^{0,1}(D).
\end{split}
\end{equation}
We can represent these vectors spaces as $(\bxi,\mp i\star_3\bxi)$,
from which it is apparent that $\cN_k^{1,0}(\Omega) \simeq
\cN_{-k}^{0,1}(\Omega)$. For real $k$, the equations $d\bxi=\pm
k\star_3\bxi$ are real, so we can take bases of the forms $\{(\bxi,\mp
i\star_3\bxi)\}$, with $\bxi$ real.

Now suppose that $\Omega$ is the complement of a bounded region $D$
with $\cC^1$ boundary $\Gamma$, and $k \in \bbC^+$ is constant.  The
space of $k$-Neumann fields is well understood to be closely related
to the topology of $\partial\Omega$. If the total genus of the
components of $\partial\Omega$ is $p$, then the remarks above imply
that
\begin{equation}\label{eqn2.15.001}
  \dim\cN_k(\Omega)=2p.
\end{equation}
This will allow us to prove the following result:
\begin{proposition} 
If $\Omega$ is the complement of a finite union of smoothly bounded
regions, then
\begin{equation} 
  \dim \cN_k^{1,0}(\Omega)=\dim \cN_k^{0,1}(\Omega)=p
\end{equation}
for all $k\in \mathbb C^+\setminus\{0\}$. 
\end{proposition}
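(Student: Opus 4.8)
The plan is to combine the splitting $\cN_k(\Omega)=\cN_k^{1,0}(\Omega)\oplus\cN_k^{0,1}(\Omega)$ from~\eqref{eqn2.14.001} with the dimension count~\eqref{eqn2.15.001}, so that it suffices to show $\dim\cN_k^{1,0}(\Omega)=\dim\cN_k^{0,1}(\Omega)$. Since the two summands add to $2p$, proving equality of dimensions gives the value $p$ for each. The natural route to equality is to exhibit a conjugate-linear isomorphism between the two eigenspaces, or alternatively to exploit the fact that the eigenvalues $\pm i$ of $\cJ_k$ occur in conjugate pairs.

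First I would recall the explicit description noted just before the statement: every element of $\cN_k^{1,0}(\Omega)$ has the form $(\bxi,-i\star_3\bxi)$ with $\star_3 d\bxi=k\bxi$, and every element of $\cN_k^{0,1}(\Omega)$ has the form $(\bxi,i\star_3\bxi)$ with $\star_3 d\bxi=-k\bxi$ (taking $\alpha=\beta=1$ for the physical Maxwell case). The remark $\cN_k^{1,0}(\Omega)\simeq\cN_{-k}^{0,1}(\Omega)$ is already observed in the text, but this relates different frequencies. To compare the two eigenspaces at the \emph{same} $k$, I would use complex conjugation: if $\bxi$ solves $\star_3 d\bxi=k\bxi$ then $\overline{\bxi}$ solves $\star_3 d\overline{\bxi}=\bar k\,\overline{\bxi}$, since $\star_3$ and $d$ are real operators. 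For real $k$ this directly sends a $\cN_k^{1,0}$ solution to a $\cN_k^{0,1}$ solution, and conjugation also preserves the vanishing normal components~\eqref{eqn2.7.01} and the outgoing radiation condition is mapped to its conjugate. This conjugate-linear map is injective, establishing $\dim\cN_k^{1,0}(\Omega)=\dim\cN_k^{0,1}(\Omega)$ at least for real $k$.

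The main obstacle is the case of genuinely complex $k\in\bbC^+\setminus\{0\}$ with $\Im k>0$, where naive conjugation sends frequency $k$ to $\bar k$ and thus leaves $\bbC^+$. For this range I would instead argue that $\dim\cN_k^{1,0}(\Omega)$ and $\dim\cN_k^{0,1}(\Omega)$ are each constant in $k$ on the connected set $\bbC^+\setminus\{0\}$. The key input is that $\cN_k^{1,0}(\Omega)$ is the space of topologically \emph{nontrivial} outgoing Beltrami fields, which by the uniqueness results (Proposition~\ref{prop3.1} and the discussion of $k$-Neumann fields) is in bijection with the Lagrangian-type cohomological data on $\partial\Omega$; the dimension of this data is purely topological and hence independent of $k$. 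Since $\dim\cN_k(\Omega)=2p$ is constant and the two summand dimensions are each locally constant (by the Fredholm/analytic-family structure of the boundary integral operators) and agree at real $k$ by conjugation, they must agree throughout $\bbC^+\setminus\{0\}$, giving the value $p$ in all cases.

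The step I expect to be delicate is justifying the local constancy of $\dim\cN_k^{1,0}(\Omega)$ in $k$: one must verify that the eigenspaces of $\cJ_k^b$ vary analytically and that no dimension-jumping occurs as $k$ moves within $\bbC^+\setminus\{0\}$. I would handle this by appealing to the analytic Fredholm theory behind the representation~\eqref{eqn3.17.001} --- the operator $\Id+K$ depends analytically on $k$, its kernel dimension is generically constant, and the identification of $\cN_k^{1,0}$ with topologically nontrivial solutions rules out the sporadic jumps that could otherwise occur. Connectedness of $\bbC^+\setminus\{0\}$ then propagates the equality from the real axis to the whole domain.
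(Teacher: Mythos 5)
Your argument has a genuine gap at its base case: complex conjugation does \emph{not} interchange the two eigenspaces at a fixed real $k$. If $\star_3 d\bxi = k\bxi$, then conjugation gives $\star_3 d\overline{\bxi} = \bar k\,\overline{\bxi}$, and for real $k$ this is again $\star_3 d\overline{\bxi} = k\overline{\bxi}$ --- the \emph{same} $(1,0)$ Beltrami equation, not the $(0,1)$ equation $\star_3 d\overline{\bxi} = -k\overline{\bxi}$. On the level of pairs, conjugation sends a solution of the time-harmonic Maxwell system at frequency $k$ to one at frequency $-\bar k$, and it carries the outgoing condition at $k$ to the outgoing condition at $-\bar k$; what it produces is exactly the isomorphism $\cN_k^{1,0}(\Omega)\simeq\cN_{-k}^{0,1}(\Omega)$ already recorded in the paper, which relates \emph{different} frequencies. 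Feeding that into the sum rule only yields $\dim\cN_k^{1,0}(\Omega)+\dim\cN_{-k}^{1,0}(\Omega)=2p$, which is compatible with values such as $p+1$ and $p-1$; it does not give equality of the two summands at a fixed $k$. So the equality at real $k$, on which your whole propagation argument rests, is not established.

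The propagation step is also unjustified: for an analytic family of Fredholm operators, the kernel dimension is only \emph{upper semi-continuous} --- constant off a discrete set, with possible upward jumps at exceptional points --- so ``local constancy on $\bbC^+\setminus\{0\}$'' is not something analytic Fredholm theory provides, and your justification for it (that $\cN_k^{1,0}(\Omega)$ is in bijection with a fixed $p$-dimensional space of topological data) is circular, since that bijection is essentially the proposition being proved. The paper's proof closes exactly this loophole without either of your two steps: writing $\dim\cN_k^{1,0}(\Omega)=\dim\Ker(\Id-\pi_W)\circ N_{A,k}\restrictedto_{V^{1,0}}$ for an analytic family of injective Fredholm operators built from the Debye representation, it combines the generic value $p$ with semicontinuity to get $\dim\cN_{k_0}^{1,0}(\Omega)\geq p$ and $\dim\cN_{k_0}^{0,1}(\Omega)\geq p$ at \emph{every} $k_0\in\bbC^+$, and then the identity $\dim\cN_k^{1,0}(\Omega)+\dim\cN_k^{0,1}(\Omega)=\dim\cN_k(\Omega)=2p$ caps both dimensions at $p$. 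In other words, the sum rule is used to rule out the upward jumps that semicontinuity would otherwise allow, rather than assuming (as you do) that no jumps occur.
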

\begin{proof}
  As shown in~\cite{EpGr}, an outgoing solution to \THME[k], with $k\neq 0,$ is
  uniquely determined by the data $(\bxi_n,[\star\bEta]_n,\ba,\bb)$, where
  $\ba, \bb\in\bbC^p$ is the topological data, defined for $j=1,\dots,p$ by
\begin{equation}\label{eqn4.17.002}
a_j=\int\limits_{A_j}\bxi_t \quad \text{ and } \quad 
b_j=\int\limits_{B_j}\bxi_t.
\end{equation}
Elements of $\cN_k(\Omega)$ correspond to data in the subspace
$W=\{(0,0,\ba,\bb)\}$. Let
\begin{equation}
N_{A,k}:(r,q,\bj_H)\to (\bxi_n,[\star\bEta]_n,\ba,\bb)
\end{equation}
be the map defined by the Debye source representation. As a function of $k,$
this is an analytic family of injective Fredholm operators.

In Section~\ref{sec6.0} it is shown that  elements of $\cM^{1,0}_k(\Omega)$ are specified by
Debye source data of the form $V^{1,0}=\{(r,ir,\bj_H^{1,0})\}$, where
$r$ has mean zero on every component of $\Gamma$ and
$i\bj_H^{1,0}=-\star_2\bj_H^{1,0}$. Similarly, elements of
$\cM^{0,1}_k(\Omega)$ are specified by Debye source data of the form
$V^{0,1}=\{(r,-ir,\bj_H^{0,1})\}$, with
$i\bj_H^{0,1}=\star_2\bj_H^{0,1}$. A moments consideration shows that
\begin{equation}
\dim\cN_k^{1,0}(\Omega)=
\dim\Ker(\Id-\pi_W)\circ N_{A,k}\restrictedto_{V^{1,0}},
\end{equation}
where $\pi_W$ denotes the orthogonal projection onto $W$.  A similar
statement exists for $\dim\cN_k^{0,1}(\Omega)$. For generic $k\in
\mathbb C^+$ we know that $\dim\cN_k^{1,0}(\Omega)=p$. Moreover,
standard results about analytic families of operators show that for
$k_0\in \mathbb C^+$ we have
\begin{equation}
\begin{split}
  \dim\Ker(\Id-\pi_W)\circ N_{A,k_0}\restrictedto_{V^{1,0}}&\geq\limsup_{k\to
    k_0}
\dim\Ker(\Id-\pi_W)\circ N_{A,k_0}\restrictedto_{V^{1,0}} \\
&=p.
\end{split}
\end{equation}
The same statement holds for $\dim\Ker(\Id-\pi_W)\circ
N_{A,k_0}\restrictedto_{V^{0,1}}$. These semi-continuity results,
along with~\eqref{eqn2.14.001} and~\eqref{eqn2.15.001}, shows that for
all $k\in \mathbb C^+$,
\begin{equation}
\dim \cN_k^{1,0}(\Omega)=\dim \cN_k^{0,1}(\Omega)=p.
\end{equation}

\end{proof}

From this it follows immediately that for any $k\in \mathbb C^+$ the
equation
\begin{equation}\label{eqn2.14.01}
  d\bxi = k\star_3\bxi \quad \text{ with } \quad i_{\bn}\bxi=0
\end{equation}
has a $p$-dimensional space of outgoing solutions. If $k=0$, and we
explicitly append the divergence condition, $d^*\bxi=0$, then this
statement remains true. In this case, the solution space is precisely
$\cH^1(\Omega)$.

If $D$ is now a bounded region with boundary $\partial D$ of genus $p$, then
there is a countable collection of real wave numbers $E^-_n=\{k_l\}$ so that
there exist topologically trivial $k_l$-Neumann fields in $D$. See
Section~\ref{subsec7.4}.  We call these frequencies \emph{$k$-Neumannn resonances}.
To see that such resonances must be real, we observe that for any solution
to~\eqref{eq:ME1} in $D$ we have the integration by parts formula
\begin{equation}
 \int\limits_{D}d\bxi^-\wedge\star_3\overline{d\bxi^-}
-  k^2\int\limits_{D}\bxi^-\wedge\star_3\overline{\bxi^-}
=
-ik\int\limits_{\partial D}\star_3\bEta^-\wedge\overline{\bxi^-}.
\end{equation}
If $(\bxi^-,\bEta^-)$ is a $k$-Neumann field and either $\bxi^-_{t}$
or $[\star_3\bEta^-]_t$ is topologically trivial, then the right hand
side is zero. This shows that $k^2$ is a positive real number. 

For $k\in \mathbb C^+\setminus E^-_n$, we have $\dim \cN_k(D)=2p$, as
in the exterior case. Since $E^-_n$ is a discrete set, we can again
show that generically
\begin{equation}
  \dim\cN_k^{1,0}(D)= \dim\cN_k^{0,1}(D)=p.
\end{equation}
It follows exactly as in the unbounded case that this holds for all $k
\notin E^-_n$. Once again for each $k\in \mathbb C^+\setminus E^-_n$
the equations in~\eqref{eqn2.14.01} have a $p$ dimensional solution
space. In Section~\ref{sec6.0} we give a more precise description of
the exceptional set. Kress \cite{kress2} proved a similar result for
the case of a torus.

Thus we see that the problem of finding Beltrami fields can be reduced
to that of finding $k$-Neumann fields, and vice versa. In the next
section we show that for frequencies in $E^-_n$, $\dim\cN_k(D)\geq
2p$. Indeed for the case that $D$ is a round ball, we show that every
Dirichlet eigenvalue for the Laplacian on scalars is also $k$-Neumann
resonance, and the dimensions of these eigenspaces grow without bound
as the eigenvalues tends to infinity.  In the simply connected case,
the topological bound does not pertain at $k$-Neumann
resonances. Corollary~\ref{cor7.10.001} gives a more precise
description of the exceptional set, from which it seems plausible that
it is empty if the genus of $\partial D$ is greater than 1. On the
other hand, we also show that for a torus of revolution the
exceptional set is infinite.

\begin{remark} While the complex structure described here is not explicitly
  defined in~\cite{PicardCurlED}, a connection between solutions to
  the \THME[$k$] satisfying $\star_3\bEta=i\bxi$ and eigenfunctions of
  $\star_3 d$ is utilized. Picard's paper also considers self-adjoint
  extensions of the curl operator in exterior domains.
\end{remark}

\section{Debye Sources and the Complex Structure}

We next state some results concerning the uniformization of the
complex structure. Let $D$ denote a bounded, connected domain in
$\bbR^3$ with smooth boundary $\partial D$.  From~\eqref{tngtds0.01},
the relationship between the Debye representation and the complex
structure on solutions to \THME[$k$] becomes quite evident:
\begin{theorem}\label{thm3.1.001} 
  Let $(\bj,\bm)$ be 1-forms on $\partial D$. If, for $k\neq 0$,
  $(\bxi^{\pm},\bEta^{\pm})$ are the solutions to \THME[$k$] defined by
  $(\bj,\bm)$, i.e., with $(r,q)$ defined by~\eqref{eqn53}, then the fields
  defined by $\cJ^D(\bj,\bm)= (-\bm,\bj)$ are $\cJ_k(\bxi^{\pm},\bEta^{\pm})$.
\end{theorem}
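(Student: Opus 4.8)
The plan is to argue directly from the explicit representation~\eqref{eqn29}, exploiting that every potential depends \emph{linearly} on the current pair $(\bj,\bm)$ and that $\star_3^2=\Id$ in every form degree. First I would name the two building-block operators: the vector single-layer $\mathcal S_k[\bc]=\int_\Gamma g_k(x,y)[\bc(y)\cdot d\bx]\,dA(y)$, which sends a tangential $1$-form to a $1$-form, and the scalar single-layer $\mathcal G_k[f]=\int_\Gamma g_k(x,y)f(y)\,dA(y)$. The content of~\eqref{srfint2}--\eqref{srfint3} is then that $\balpha=\mathcal S_k[\bj]$ and $\phi=\mathcal G_k[r]$ are built from the electric data, while the anti-potentials $\btheta=\star_3\mathcal S_k[\bm]$ and $\psi=\mathcal G_k[q]$ are built by the \emph{same} operators from the magnetic data. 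Substituting into~\eqref{eqn29} and using $d^*\btheta=\star_3 d\star_3\btheta=\star_3 d\,\mathcal S_k[\bm]$ together with $-d^*\Psi=\star_3 d\psi$, the representation reads
\begin{align*}
\bxi&=ik\,\mathcal S_k[\bj]-d\,\mathcal G_k[r]-\star_3 d\,\mathcal S_k[\bm],\\
\bEta&=ik\,\star_3\mathcal S_k[\bm]+\star_3 d\,\mathcal G_k[q]+d\,\mathcal S_k[\bj].
\end{align*}

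Next I would record how the scalar sources transform under $\cJ^D$. Feeding $(\bj,\bm)\mapsto(-\bm,\bj)$ into the consistency conditions~\eqref{eqn53} shows that the associated scalar Debye data transforms as $(r,q)\mapsto(q,-r)$; this is immediate on reading off the two equations in~\eqref{eqn53} for the swapped currents. Consequently the fields $(\tilde\bxi,\tilde\bEta)$ produced by $\cJ^D(\bj,\bm)$ are obtained from the two displayed formulas by the replacements $\bj\mapsto-\bm$, $\bm\mapsto\bj$, $r\mapsto q$, $q\mapsto-r$, each of which passes through by linearity.

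It then remains to compute $\cJ_k(\bxi,\bEta)=(-\star_3\bEta,\star_3\bxi)$ from the same display and match term by term. Applying $\star_3$ and using $\star_3^2=\Id$ gives
\begin{align*}
-\star_3\bEta&=-ik\,\mathcal S_k[\bm]-d\,\mathcal G_k[q]-\star_3 d\,\mathcal S_k[\bj],\\
\star_3\bxi&=ik\,\star_3\mathcal S_k[\bj]-\star_3 d\,\mathcal G_k[r]-d\,\mathcal S_k[\bm],
\end{align*}
and one checks directly that the right-hand sides coincide with $\tilde\bxi$ and $\tilde\bEta$, respectively. Since the whole argument manipulates only the global representation formulas, it applies verbatim to the interior limit ($-$) and the exterior limit ($+$) at once, so it yields the claim for $(\bxi^{\pm},\bEta^{\pm})$ simultaneously.

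I expect no conceptual obstacle here; the statement is essentially a bookkeeping identity. The only real care needed is tracking the Hodge stars — in particular remembering that $\btheta$ is the $\star_3$-dual of a vector potential of exactly the same shape as $\balpha$, so that $\star_3$ interchanges the $\bj$-built and $\bm$-built terms — together with the sign in the induced map $(r,q)\mapsto(q,-r)$. It is precisely this sign that makes the two off-diagonal terms line up and produce $-\star_3\bEta$ rather than $+\star_3\bEta$ in the first slot, consistent with $\cJ^2=-\Id$.
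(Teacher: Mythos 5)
Your proposal is correct and is exactly the argument the paper intends: the paper's proof consists of the single remark that the claim is ``a simple algebraic consequence of the Debye source representation formula~\eqref{eqn29} and the definitions of the potentials~\eqref{srfint2},'' and your computation—tracking the induced map $(r,q)\mapsto(q,-r)$ through~\eqref{eqn53} and matching terms after applying $\star_3$—is precisely that algebra written out. No gaps; the sign bookkeeping and the observation that the identity holds on all of $\Gamma^c$ (hence for both $\pm$ limits) are right.
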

\begin{proof} 
  This is a simple algebraic consequence of the Debye source
  representation formula~\eqref{eqn29} and the definitions of the
  potentials~\eqref{srfint2}.
\end{proof}
Briefly, using the Debye source representation, the complex structure on
solutions to \THME[$k$] is represented on a \emph{fixed} vector space by an
operator that is independent of the frequency! This is therefore a
uniformization theorem for the complex vector spaces $\cM_k(\Omega)$. The
theorem is correct even for frequencies where the map to tangent electric (or
magnetic) component is not injective. In this generality, the converse statement is
not correct, as there is non-zero Debye source data $(r,q,\bj,\bm)$
satisfying~\eqref{eqn53} that defines the zero-field $(0,0)$ in $D$. To obtain
the converse statement we need to posit relations between $\bj$ and $\bm$ like
those in~\eqref{eqn3.21.001}. As a corollary of Theorem~\ref{thm3.1.001} and
Theorem~\ref{prop3.5.01} we have:
\begin{corollary}\label{cor5.2.002}
  Let $D$ be a bounded connected region in $\bbR^3$ satisfying the
  hypotheses of Theorem~\ref{prop3.5.01}. If $k\in \mathbb C^+$ and
  $(\bxi^-,\bEta^-)$ is a time-harmonic Maxwell field in $D$ defined
  by Debye source data
  \begin{equation*}
  [(r_0,q_0,\bj_{H0}),(r_1,q_1,\bj_{H1}),\dots,(r_d,q_d,\bj_{Hd})],
  \end{equation*}
  with $\bj$ defined by~\eqref{eqn3.15.002} and $\bm$
  by~\eqref{eqn3.21.001}, then $\cJ_k(\bxi^-,\bEta^-)$ is uniquely
  defined by the Debye source data
  \begin{equation*}
  [(q_0,-r_0,\star_2\bj_{H0}),
    (q_1,-r_1,-\star_2\bj_{H1}),\dots,(q_d,-r_d,-\star_2\bj_{Hd})].
  \end{equation*}
\end{corollary}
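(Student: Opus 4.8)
The plan is to reduce everything to Theorem~\ref{thm3.1.001}, which already describes the action of $\cJ_k$ at the level of the raw Debye currents, and then to translate that action into the canonical source data $(r_l,q_l,\bj_{Hl})$ attached to each boundary component. Concretely, Theorem~\ref{thm3.1.001} gives that if $(\bxi^-,\bEta^-)$ is represented by currents $(\bj,\bm)$, then $\cJ_k(\bxi^-,\bEta^-)$ is represented by $\cJ^D(\bj,\bm)=(-\bm,\bj)$. Inserting the relations~\eqref{eqn3.21.001}, on the outer component this produces the transformed current $\bj_0'=-\bm_0=\star_2\bj_0$ with $\bm_0'=\bj_0$, and on each inner component $\bj_l'=-\bm_l=-\star_2\bj_l$ with $\bm_l'=\bj_l$. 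First I would check that these transformed currents are again admissible, i.e.\ that they satisfy~\eqref{eqn3.21.001}: using $\star_2^2=-\Id$ on tangential $1$-forms one finds $-\star_2\bj_0'=\bj_0=\bm_0'$ and $\star_2\bj_l'=\bj_l=\bm_l'$, so the data lies in $\cS(D)$ and the Debye source data of $\cJ_k(\bxi^-,\bEta^-)$ is well defined.

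The core of the argument is then coefficient matching. I would substitute the canonical expansions~\eqref{eqn3.15.002} into $\bj_0'=\star_2\bj_0$ and $\bj_l'=-\star_2\bj_l$, using $\star_2^2=-\Id$ to interchange the exact piece $d_{\Gamma}R r$ and the co-exact piece $\star_2 d_{\Gamma}R q$. For $\Gamma_0$ this yields
\begin{equation*}
  \bj_0'=ik\bigl(d_{\Gamma_0}R_{00}q_0-\star_2 d_{\Gamma_0}R_{00}(-r_0)\bigr)+\star_2\bj_{H0},
\end{equation*}
which is precisely the canonical form~\eqref{eqn3.15.002} for the source data $(q_0,-r_0,\star_2\bj_{H0})$. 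For $l\geq 1$ the extra sign in $\bj_l'=-\star_2\bj_l$ together with the opposite sign of the $q$-term in~\eqref{eqn3.15.002} combine to give the canonical form for $(q_l,-r_l,-\star_2\bj_{Hl})$. Here I would also record that $\star_2$ preserves the space $\cH^1(\Gamma_l)$ of tangential harmonic $1$-forms, so that $\pm\star_2\bj_{Hl}$ is again a legitimate harmonic datum, and that the scalar sources of the transformed currents, computed through~\eqref{eqn53}, agree with $(q_l,-r_l)$, consistently with the expansion.

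Finally I would invoke the uniqueness half of Theorem~\ref{prop3.5.01}: since admissible Debye source data in $\cS(D)$ represents each Maxwell field uniquely, the data $(q_l,-r_l,\pm\star_2\bj_{Hl})$ read off from the currents of $\cJ_k(\bxi^-,\bEta^-)$ does represent that field, which is the stated formula. The only genuine subtlety is bookkeeping rather than analysis, since the parent theorem is purely algebraic: one must keep the identity $\star_2^2=-\Id$ and the two distinct relations in~\eqref{eqn3.21.001} straight, so that the exact/co-exact interchange yields the swap $(r,q)\mapsto(q,-r)$ with the correct component-dependent sign on the harmonic part. I expect this sign-tracking, and nothing conceptual, to be the main point to get right.
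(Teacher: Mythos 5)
Your proposal is correct and is essentially the paper's own (implicit) argument: the corollary is deduced exactly by combining Theorem~\ref{thm3.1.001} (the currents transform as $(\bj,\bm)\mapsto(-\bm,\bj)$), the relations~\eqref{eqn3.21.001}, and the uniqueness statement of Theorem~\ref{prop3.5.01}, with the $\star_2^2=-\Id$ coefficient matching in~\eqref{eqn3.15.002} supplying the swap $(r_l,q_l)\mapsto(q_l,-r_l)$ and the component-dependent signs $\star_2\bj_{H0}$, $-\star_2\bj_{Hl}$. Your sign bookkeeping on both the outer and inner components, the admissibility check that the transformed currents again satisfy~\eqref{eqn3.21.001}, and the consistency check of the scalar sources via~\eqref{eqn53} are all accurate.
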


\begin{remark} 
We let $\cJ^{\cS(D)}(r,q,\bj_H)$ denote Debye source representation
for the complex structure, which is again frequency independent:
\begin{multline}
 \cJ^{\cS(D)}:\: [(r_0,q_0,\bj_{H0}),(r_1,q_1,\bj_{H1}),\dots,(r_d,q_d,\bj_{Hd})]\mapsto\\
[(q_0,-r_0,\star_2\bj_{H0}),(q_1,-r_1,-\star_2\bj_{H1}),\dots,(q_d,-r_d,-\star_2\bj_{Hd})].
\end{multline}
\end{remark}

This corollary  leads directly to a result giving the dimension of the
space of $k$-Neumann fields on $D$ when $\partial D$ is not
connected.
\begin{corollary} 
Let $p_l$, $l=0,\dots,d$, denote the genera of the boundary components
of $D$, and $p=p_0+\cdots+p_d$. For $k\in \mathbb C^+\setminus E^-_n$,
the space of $k$-Neumann fields defined in $D$, $\cN_k(D)$, is $2p$
dimensional.  For $k\in E^-_n$ this dimension is at least $2p$.
\end{corollary}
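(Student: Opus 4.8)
The plan is to reduce everything to the frequency-independent picture supplied by the uniformization of $\cJ_k$ and then run the Fredholm/semicontinuity argument exactly as in the exterior Proposition. First I would record the splitting $\cN_k(D)=\cN_k^{1,0}(D)\oplus\cN_k^{0,1}(D)$ coming from the complex structure $\cJ_k$, and invoke Corollary~\ref{cor5.2.002} together with the displayed formula for $\cJ^{\cS(D)}$: under the Debye representation the eigenspaces are carried to the fixed source data $V^{1,0}=\{(r,ir,\bj_H^{1,0})\}$ and $V^{0,1}=\{(r,-ir,\bj_H^{0,1})\}$, where $r$ has mean zero on each $\Gamma_l$ and $i\bj_H^{1,0}=-\star_2\bj_H^{1,0}$, $i\bj_H^{0,1}=\star_2\bj_H^{0,1}$. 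For several boundary components I would note that $\cH^1(\Gamma)=\bigoplus_{l=0}^d\cH^1(\Gamma_l)$ has complex dimension $2p$ with $p=\sum_l p_l$, that $H^1_{\dR}(\partial D)\cong\bbC^{2p}$, and that the surface star $\star_2$ splits $\cH^1(\Gamma)$ into its $\pm i$-eigenspaces, each of dimension $p$; this is what makes $V^{1,0}$ and $V^{0,1}$ carry $p$-dimensional harmonic parts.

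Next I would introduce, as in the exterior Proposition, the analytic family $N_{A,k}\colon (r,q,\bj_H)\mapsto(\bxi_n^-,[\star_3\bEta^-]_n,\ba,\bb)$ attached to the interior Debye system, which by the results of Section~\ref{s.potentials} is an analytic family of Fredholm operators of index zero that intertwines $\cJ^{\cS(D)}$ with the induced action of $\cJ_k$ on the target. The key observation is that $N_{A,k}$ is injective precisely for $k\notin E^-_n$: an element of its kernel corresponds, by the strong uniqueness Theorem~\ref{prop3.5.01}, to a $k$-Neumann field whose periods vanish, i.e. a topologically trivial $k$-Neumann field, and such nonzero fields exist exactly at the frequencies of $E^-_n$. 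Hence for $k\notin E^-_n$, injectivity plus index zero makes $N_{A,k}$ an isomorphism. Since $\cN_k(D)=N_{A,k}^{-1}(W)$ with $W=\{(0,0,\ba,\bb)\}\cong\bbC^{2p}$, this already gives $\dim\cN_k(D)=2p$; and because $N_{A,k}$ respects the eigenspace decomposition and $W=W^{1,0}\oplus W^{0,1}$ with $\dim W^{1,0}=\dim W^{0,1}=p$ (the complex structure rotates the normal-component pair, fixing $W$, and acts as $\star_2$ on the periods), each eigenspace $\cN_k^{1,0}(D),\cN_k^{0,1}(D)$ has dimension exactly $p$ off $E^-_n$.

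Finally, for $k\in E^-_n$ I would appeal to upper semicontinuity of the kernel dimension of an analytic Fredholm family: restricting $(\Id-\pi_W)\circ N_{A,k}$ to $V^{1,0}$ and to $V^{0,1}$, whose kernels compute $\cN_k^{1,0}(D)$ and $\cN_k^{0,1}(D)$, the dimension can only jump up at the discrete set $E^-_n$, so $\dim\cN_k^{1,0}(D)\geq\limsup_{k'\to k}\dim\cN_{k'}^{1,0}(D)=p$ and likewise for the $(0,1)$ part. Summing gives $\dim\cN_k(D)\geq 2p$, which is the claimed lower bound.

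I expect the main obstacle to be the careful justification that $N_{A,k}$ is genuinely Fredholm of index zero on the relevant Sobolev spaces (including the $\bbC^{2p}$ of topological data) and that its kernel is controlled entirely by $E^-_n$ via Theorem~\ref{prop3.5.01}; this is where the strong uniqueness result does the real work, and the analytic-Fredholm semicontinuity must be applied on the eigenspace-adapted spaces $V^{1,0},V^{0,1}$ rather than on the full data space. A secondary point requiring separate verification is the decoupled case $k=0$, where the divergence condition must be appended and $\cN_0(D)$ is computed directly as the sum of the $p$-dimensional spaces of tangential harmonic $1$-forms and of harmonic $2$-forms with vanishing normal part, again giving $2p$.
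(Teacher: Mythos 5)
Your first step is sound and is essentially the paper's own argument for the first assertion: by Theorem~\ref{prop3.5.01} the kernel of the augmented normal map $N_{A,k}$ consists exactly of data representing topologically trivial $k$-Neumann fields, so $N_{A,k}$ is injective precisely for $k\notin E^-_n$; index zero then makes it an isomorphism, and $\cN_k(D)\cong N_{A,k}^{-1}(W)$ gives $\dim\cN_k(D)=2p$. The gap is in your lower bound at $k\in E^-_n$. Your semicontinuity argument needs the generic input $\dim\cN_{k'}^{1,0}(D)=\dim\cN_{k'}^{0,1}(D)=p$ for $k'$ near $k$, and you justify it by asserting that $N_{A,k}$ intertwines $\cJ^{\cS(D)}$ with a \emph{fixed} action of $\cJ_k$ on the target under which $W=W^{1,0}\oplus W^{0,1}$ with each summand $p$-dimensional (``acts as $\star_2$ on the periods''). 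That assertion is false: the target of $N_{A,k}$ records only the periods $(\ba,\bb)$ of $\bxi_t$ from~\eqref{eqn4.17.002}, while the electric component of $\cJ_k(\bxi,\bEta)$ is $-\star_3\bEta$, whose periods are those of $-[\star_3\bEta]_t$; these are \emph{not} a frequency-independent linear function of $(\bxi_n,[\star_3\bEta]_n,\ba,\bb)$. Consequently $N_{A,k}(V^{1,0})\cap W$ is a $k$-dependent $p$-dimensional subspace of $W$ (on a torus, the linear relation between the $A$- and $B$-periods of a $(1,0)$ $k$-Neumann field varies with $k$), and there is no fixed splitting of $W$ from which to read off $\dim\cN_k^{1,0}(D)=p$. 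Note also that in the paper the equality $\dim\cN_k^{1,0}(D)=p$ off $E^-_n$ is only established in Section~\ref{sec6.0}, by a $k=0$ degeneration and analyticity, and that argument \emph{uses} the present corollary; so deriving the corollary from the eigenspace dimensions, as you propose, risks circularity unless you supply an independent proof of the generic eigenspace count.

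The fix does not require the complex structure at all. Either follow the paper: at $k\in E^-_n$ the augmented normal system has an $n$-dimensional null-space, which by injectivity of the Debye representation (Theorem~\ref{prop3.5.01}) yields an $n$-dimensional space of (topologically trivial) $k$-Neumann fields; by the Fredholm alternative the system is solvable for target data in $W\cong\bbC^{2p}$ satisfying $n$ linear conditions, producing a further space of $k$-Neumann fields with prescribed, nonzero periods of dimension at least $2p-n$, independent of the first; summing gives $\dim\cN_k(D)\geq 2p$. Alternatively, your semicontinuity idea works if applied to the \emph{full} system rather than the eigenspace restrictions: $\Ker\bigl[(\Id-\pi_W)\circ N_{A,k}\bigr]$ on the full data space is isomorphic to $\cN_k(D)$ for every $k\in\bbC^+$ (again by Theorem~\ref{prop3.5.01}), its dimension is $2p$ off the discrete set $E^-_n$ by your first step, and upper semicontinuity of kernel dimension for an analytic Fredholm family then gives $\dim\cN_k(D)\geq 2p$ at $k\in E^-_n$. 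Either route closes the gap; both avoid the unproved splitting $W=W^{1,0}\oplus W^{0,1}$.
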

\begin{proof}
  A small modification of the argument used to prove Theorem 4.1
  in~\cite{EpGr2} along with Theorem~\ref{prop3.5.01} show that for all
  but the countably many real frequencies in $E^-_n$, the normal
  equations~\eqref{nrmds0} augmented with the algebraic conditions
  in~\eqref{eqn4.17.002} define a one-to-one and onto map,
  \begin{equation}
    \{(r_l,q_l,\bj_H)\} \mapsto [\bxi^-_n,\bEta^-_n,(\ba,\bb)],
  \end{equation}
  where $\ba,\bb\in\bbC^p$. The representation in terms of the data in
  $\cS(D)$ is injective. This shows that for $k\notin E^-_n$, the
  dimension of the space of solutions to \THME[$k$] in $D$ with given
  normal data, $(\bxi^-_n,\bEta^-_n)$, equals $2p$:
  \begin{equation}
    \sum_{l=0}^d\dim\cH^1(\Gamma_l)=2p.
  \end{equation}
The space of $k$-Neumann fields is just the special case of data of
the form $\{[0,0,(\ba,\bb)]:\: \ba,\bb\in \bbC^p\}$.

For frequencies $k\in E^-_n$, where the augmented system of normal equations
has a non-trivial null-space, there is at least a $2p$ dimensional space of
$k$-Neumann fields. If the dimension of the null-space is $n$, then this
already constitutes an $n$-dimensional subspace of $\cN_k(D)$. By the Fredholm
alternative, the augmented normal system is solvable for data satisfying $n$
linear conditions, which means that we get an additional subspace of $\cN_k(D)$
with dimension at least $2p-n$.
\end{proof}

\begin{remark}
   Above we explain why the exceptional frequencies are real. This
   suggests that there should be a self-adjoint boundary value problem
   whose spectrum contains these frequencies.  There is, in fact, a
   family of such self-adjoint problems among whose spectra the
   frequencies in $E^-_n$ occur.  This indicates that the problem of
   characterizing the frequencies for which there is a topologically
   trivial solution, and determining the dimensions of the space of
   $k$-Neumann fields for these frequencies, may well be quite
   difficult. These boundary conditions are explained in
   Section~\ref{sec6}.
\end{remark}

\section{Finding Beltrami Fields in Bounded Domains}\label{sec6.0}

Theorem~\ref{thm3.1.001} shows that the boundary data for the space
$\cM_k^{1,0}(D)$ consists of Debye source data for which
$\cJ^D(\bj,\bm)=i(\bj,\bm)$, that is 
\begin{equation}
\bm=-i\bj,
\end{equation} and $\cM_k^{0,1}(D)$
consists of Debye source data for which
$\cJ^D(\bj,\bm)=-i(\bj,\bm)$. In this section we consider the case of
a bounded region $D$ with boundary $\partial
D= \Gamma = \Gamma_0\cup\Gamma_1\cup\cdots\cup\Gamma_d$.

If we represent $\bj$ and $\bm$ in terms of scalar Debye sources and harmonic
1-forms, then we see that if 
\begin{equation}\label{eqn3.28}
  \bm=\pm i\bj,
\end{equation}
 then
\begin{equation}\label{eqn3.28a}
  q=\mp ir,
\end{equation}
so that for $l=1,\dots,d$
\begin{equation}\label{eqn3.29}
\begin{split}
\bj_0&=ik(d_{\Gamma_0}R_{00}r_0\pm
i\star_2d_{\Gamma_0}R_{00}r_0)+\bj_{H0},\\ \bj_l&=ik(d_{\Gamma_l}R_{l0}r_l\mp
i\star_2d_{\Gamma_l}R_{l0}r_l)+\bj_{Hl}.
\end{split}
\end{equation}
We also have relations satisfied by the harmonic components:
\begin{equation}\label{eqn3.30}
  \star_2\bj_{H0}=\mp i\bj_{H0} \quad \text{ and } \quad
\star_2\bj_{Hl}=\pm i\bj_{Hl},
\end{equation}
for $l=1,\dots,d$.
The  conditions in~\eqref{eqn3.30} are the requirements that harmonic 1-forms
$\{\bj_{Hl}\}$ be either holomorphic or anti-holomorphic $(1,0)$-forms, or $(0,1)$-forms.

Corollary~\eqref{cor5.2.002} has the following useful corollary.
\begin{corollary} 
If $D$ is bounded, then all solutions to the \THME[$k$] in
  $D$ belonging to $\cM_k^{1,0}(D)$ are given by data of the form
  $(\bj,-i\bj)$, and solutions in $\cM_k^{0,1}(D)$ are given by data of the
  form $(\bj,i\bj)$, where these vector sources satisfy the relations
  in~\eqref{eqn3.29} and~\eqref{eqn3.30}.
\end{corollary}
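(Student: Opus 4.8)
The plan is to read off both assertions from the uniformization of the complex structure already in hand, with essentially no new analysis. The inputs are three. Theorem~\ref{prop3.5.01} says that the Debye representation by data in $\cS(D)$ subject to~\eqref{eqn3.21.001} is a complex-linear bijection onto $\cM_k(D)$. Corollary~\ref{cor5.2.002}, together with the explicit formula in the remark that follows it, shows that this bijection intertwines $\cJ_k$ with the frequency-independent map
\begin{equation*}
 \cJ^{\cS(D)}:\ (r_l,q_l,\bj_{Hl})\longmapsto (q_l,-r_l,\epsilon_l\star_2\bj_{Hl}),
\end{equation*}
where $\epsilon_0=+1$ and $\epsilon_l=-1$ for $1\le l\le d$. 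Finally, $\cM_k^{1,0}(D)$ and $\cM_k^{0,1}(D)$ are by definition the $+i$ and $-i$ eigenspaces of $\cJ_k$. If $v\in\cS(D)$ represents $(\bxi,\bEta)\in\cM_k^{1,0}(D)$, then $\cJ^{\cS(D)}v$ and $iv$ both represent $\cJ_k(\bxi,\bEta)=i(\bxi,\bEta)$, so injectivity forces $\cJ^{\cS(D)}v=iv$; conversely each eigenvector of $\cJ^{\cS(D)}$ represents an eigenfield. Hence $\cM_k^{1,0}(D)$ corresponds exactly to the $+i$ eigenspace of $\cJ^{\cS(D)}$, and $\cM_k^{0,1}(D)$ to the $-i$ eigenspace, reducing the claim to a purely algebraic eigenvalue computation on the Debye data, done component by component over the $\Gamma_l$.

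First I would solve $\cJ^{\cS(D)}(r_l,q_l,\bj_{Hl})=i(r_l,q_l,\bj_{Hl})$. Matching scalar components gives $q_l=ir_l$ together with $-r_l=iq_l$; these coincide, since $i(ir_l)=-r_l$, so the system is not overdetermined and is exactly~\eqref{eqn3.28a} for the sign corresponding to $\bm=-i\bj$. Matching harmonic components gives $\star_2\bj_{H0}=i\bj_{H0}$ and $\star_2\bj_{Hl}=-i\bj_{Hl}$ for $1\le l\le d$, which are precisely~\eqref{eqn3.30}. Substituting $q_l=ir_l$ into the defining relation~\eqref{eqn3.15.002} produces~\eqref{eqn3.29}, and pushing $q_l=ir_l$ back through the consistency conditions~\eqref{eqn53} yields $\bm=-i\bj$, so the data has the stated form $(\bj,-i\bj)$. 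The $\cM_k^{0,1}(D)$ case is the identical computation with $i$ replaced by $-i$: it gives $q_l=-ir_l$, $\bm=i\bj$, the opposite harmonic signs, and the complementary sign choices in~\eqref{eqn3.29}--\eqref{eqn3.30}.

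Since the genuine content---existence, uniqueness, and the intertwining property---is already provided by Theorem~\ref{prop3.5.01} and Corollary~\ref{cor5.2.002}, no analytic difficulty remains. The step requiring the most care is sign bookkeeping: keeping straight the factor $\epsilon_l$ that separates the outer component $\Gamma_0$, where $\bm_0=-\star_2\bj_0$, from the inner components $\Gamma_l$, where $\bm_l=\star_2\bj_l$, and correctly carrying the sign through the dictionary between the currents $(\bj,\bm)$ and the scalar sources $(r,q)$ in~\eqref{eqn53}. I expect this sign-tracking, rather than any conceptual point, to be the only place an error might enter.
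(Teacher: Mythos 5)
Your proof is correct and is essentially the paper's own argument: the corollary is read off from the uniformization of the complex structure (Theorem~\ref{thm3.1.001} and Corollary~\ref{cor5.2.002}) together with the bijectivity of the $\cS(D)$-representation (Theorem~\ref{prop3.5.01}), followed by the sign algebra of~\eqref{eqn3.28}--\eqref{eqn3.30}; the only cosmetic difference is that the paper diagonalizes $\cJ^{D}$ on the currents $(\bj,\bm)$ and then derives the scalar relations, while you diagonalize $\cJ^{\cS(D)}$ on the data $(r_l,q_l,\bj_{Hl})$ and then recover $\bm=-i\bj$. One step to phrase more carefully: $\bm=-i\bj$ does not follow from~\eqref{eqn53} alone (that only gives $d_{\Gamma}\star_2(\bm+i\bj)=0$); it follows from the defining relations~\eqref{eqn3.21.001} together with $\star_2\star_2=-\Id$ on 1-forms, since~\eqref{eqn3.29} and~\eqref{eqn3.30} then give $\star_2\bj_0=i\bj_0$ and $\star_2\bj_l=-i\bj_l$, hence $\bm_0=-\star_2\bj_0=-i\bj_0$ and $\bm_l=\star_2\bj_l=-i\bj_l$.
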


We can use this representation to find Beltrami fields. For data
satisfying~\eqref{eqn3.28} the equation for the normal component of the
electric field reduces to:
\begin{equation}\label{eqn5.23.002}
  \cN_{\bxi}(r,\bj_H)=-\frac{r}{2}-K_0r+ikK_{2,n}\bj\mp iK_3\bj=f.
\end{equation}
If the total genus of $\partial D$ is $p$, then the space of harmonic
1-forms satisfying the conditions in~\eqref{eqn3.30} is $p$
dimensional and so this equation must be augmented with $p$ algebraic
conditions. For $k=0$, it is easy to see which additional conditions
lead to an invertible Fredholm system of second kind. The same
additional conditions therefore suffice for $k$ in the complement of a
discrete set. In fact there are many possible choices as we explain in
Sections~\ref{sec7.1}--\ref{sec7.2}.

We let $\{A_j:\:j=1,\dots,p\}\cup \{B_j:\:j=1,\dots,p\}$ denote a
basis for $H_1(\partial D)$, where for each $A_j$, there is a chain
$S_j\subset D$ with $\partial S_j=A_j$. For $k\notin E^-_n$, a
$k$-Neumann field $(\bxi,\bEta)$ is determined by the additional data
\begin{equation}\label{eqn3.33}
      a_j=\frac{1}{ik}\int\limits_{A_j}\bxi,
\end{equation}
\begin{equation}\label{eqn3.33.001}
 b_j=\int\limits_{B_j}\bxi
    \end{equation}
for $j=1,\dots, p$.
Using the equation $d\bxi=ik\bEta$, the integrals over the $A$-cycles
can be replaced with
\begin{equation}\label{eqn5.26.002}
 a_j= \int\limits_{S_j}\bEta.
\end{equation}
Indeed, as shown in~\cite{EpGr2}, these integrals can be specified
arbitrarily as $k\to 0$.

If we now restrict attention to $(\bxi,\bEta)\in \cN^{1,0}_k(D)$, then
$\bEta=-i\star_3\bxi$ and the harmonic 1-forms
satisfy~\eqref{eqn3.30}, hence there are only $p$ additional
parameters. The area integrals in~\eqref{eqn5.26.002} can be
re-expressed as:
\begin{equation}\label{eqn3.35}
  a_j=-i\int\limits_{S_j}\star_3\bxi, 
\end{equation}
for $j=1,\dots, p$. We therefore have the following Fredholm system of
second kind for solutions to the Beltrami equation
$d\bxi=k\star_3\bxi$
\begin{equation}\label{eqn6.10.004}
  \begin{split}
      \cN_{\bxi}(r,\bj_H)&=f\\
\int\limits_{S_j}\star_3\bxi &=a_j,
  \end{split}
\end{equation}
for $j=1,\dots, p.$ 
Here $f$ is any function with mean zero on every component of
$\partial D$ and $(a_1,\dots,a_p)\in\bbC^p$.

As $k\to 0$ the elements of $ \cN^{1,0}_k(D)$ converge to harmonic
representatives of $H^1_{\dR}(D)$, which is a $p$-dimensional vector
space. Poincar\'e duality implies that if $\bxi$ is such a harmonic
1-form, then $\star_3\bxi$ is a harmonic representative of an element
of $H^2_{\dR}(D,\partial D)$. The integrals over the $B$-cycles
in~\eqref{eqn3.33.001} uniquely determine elements of $H^1_{\dR}(D)$,
while the integrals in~\eqref{eqn3.35} uniquely determine elements of
$H^2_{\dR}(D,\partial D)$. Thus Theorem~\ref{prop3.5.01} shows that at
$k=0$ either set of data suffices to uniquely determine an element of
$\cN^{1,0}_0(D)$. The same line of reasoning applies to
$\cN^{0,1}_0(D)$. This analysis therefore shows that
\begin{equation}\label{eq6.9.002}
  \dim\cN^{1,0}_0(D)=\dim\cN^{0,1}_0(D)=p.
\end{equation}

As the equations in~\eqref{eqn6.10.004} depend analytically on $k$,
and are injective at $k=0$, it is clear that for all but a countable
collection of wave numbers, those in $E^{-(1,0)}_n$ and
$E^{-(0,1)}_n$, the integrals in either~\eqref{eqn5.26.002}
or~\eqref{eqn3.33.001} determine elements of $\cN^{1,0}_k(D)$ and
$\cN^{0,1}_k(D)$, respectively.  Since for all $k$
\begin{equation}
  \dim\cN_k(D)=\dim\cN^{1,0}_k(D)+\dim\cN^{0,1}_k(D),
\end{equation}
it must be the case that for sufficiently small $k$
\begin{equation}\label{eqn5.28.002}
  \dim\cN^{1,0}_k(D)=\dim\cN^{0,1}_k(D)=p.
\end{equation}
If $k\in E^{-(1,0)}_n$, then the standard Fredholm alternative
argument applied to~\eqref{eqn6.10.004}, along with
Theorem~\ref{prop3.5.01}, show that
\begin{equation}
  \dim\cN^{1,0}_k(D)\geq p,
\end{equation}
with an analogous result for $k\in E^{-(0,1)}_n$ and
$\dim\cN^{0,1}_k(D)$.  This implies that the equalities
in~\eqref{eqn5.28.002} hold for $k\in \mathbb C^+\setminus
E^-_n$. Hence, it is only for frequencies in $E^-_{n}$ that there may
be a surplus of solutions to the Beltrami equations with vanishing
normal data.

For frequencies $k\in E^{-(1,0)}_n$ there are solutions $\bxi^-$ to the
equation
\begin{equation}
  d\bxi^-=k\star_3\bxi^-
\end{equation}
with $i_{\bn}\bxi^-=0$ and vanishing fluxes
\begin{equation}
  \int\limits_{S_j}\star_3\bxi^-=\frac{1}{k}\int\limits_{A_j}\bxi^-=0.
\end{equation}
In the next section we show that there is always an infinite sequence
of real frequencies for which such solutions exist, and place this
problem in the larger context of a family of self-adjoint boundary
value problems for the operator $\star_3d$ acting on divergence-free
1-forms in a bounded domain. This family of operators is parametrized by Lagrangian
subspaces of $H^1_{\dR}(\partial D)$ with respect to the wedge product
pairing.

\section{Non-local Self-Adjoint Boundary Conditions}\label{sec6}

The operator $\cB$ acting on divergence-free 1-forms ($d^*\bxi=0$),
\begin{equation}
\cB\bxi=\star_3d\bxi,
\end{equation}
and the operator $\cL$ acting on the space of divergence-free pairs
$(\bxi,\bEta)$, i.e. those satisfying $d^*\bxi=0$ and $d\bEta=0$,
\begin{equation}
\cL(\bxi,\bEta)=(id^*\bEta,-id\bxi),
\end{equation}
are formally self-adjoint, or symmetric with respect to the standard
$L^2$-structures:
\begin{equation}
  \|\bxi\|_{L^2}^2=\int\bxi\wedge\star\overline\bxi\text{ and }
\|(\bxi,\bEta)\|_{L^2}^2=\int\bxi\wedge\star\overline{\bxi}+
\int\bEta\wedge\star\overline{\bEta}.
\end{equation}
In this section we consider
certain non-local boundary conditions that define self-adjoint
operators with these formal expressions. This question has been
considered by several authors. A result equivalent to
Theorem~\ref{thm7.2.001} below appears in~\cite{HKTSACurlBD}, with an
earlier related result
in~\cite{PicardCurlBD}. The article~\cite{HKTSACurlBD} has an extensive
bibliography of papers that consider self-adjoint boundary conditions
for the curl operator.

The analysis of the Beltrami operator is greatly facilitated by some standards
estimates that arise in the study of the Laplacian on forms. We use
$W^{s,2}(D),$ for $s\in\bbR,$ to denote the $L^2$-Sobolev spaces of
distributions on $D.$ The results we use are the estimates (9.16) and (9.32)
from Chapter 5 of~\cite{TaylorI}. If $D$ is a smoothly bounded domain in
$\bbR^n,$ then there is a constant $C$ so that $k$-forms satisfy the following
estimate:
\begin{equation}\label{eqn7.8.007}
  \|\bxi\|^{2}_{W^{1,2}(D)}\leq
  C\left[\|d\bxi\|^{2}_{L^2(D)}+\|d^*\bxi\|^{2}_{L^2(D)}+
\|i_{\bn}\bxi\|^{2}_{W^{1/2,2}(D)}+\|\bxi\|^{2}_{L^2(D)}\right]
\end{equation}
This holds for $\bxi\in W^{1,2}(D),$ but, as follows from the proof
of~\eqref{eqn7.8.007} via potential theory, it should be understood that the
finiteness of the right hand side implies that $\bxi\in W^{1,2}(D).$ To prove
higher norm estimates we use the inequality, again for $k$-forms defined in
$W^{j+1,2}(D):$
\begin{equation}\label{eqn7.9.007}
  \|\bxi\|^{2}_{W^{j+1,2}(D)}\leq
  C\left[\|\Delta\bxi\|^{2}_{W^{j-1,2}(D)}+
\|i_{\bn}\bxi\|^{2}_{W^{j+1/2,2}(\pa D)}+
\|i_{\bn}d\bxi\|^{2}_{W^{j-1/2,2}(\pa D)}+\|\bxi\|^{2}_{L^2(D)}\right].
\end{equation}
As before, the finiteness of the right hand side implies that $\bxi\in W^{j+1,2}(D)$

The following classical lemma on the existence of distributional boundary values
is very useful in the sequel.
\begin{lemma}\label{lem7.0.002} 
Let $D$ be a smoothly bounded domain in $\bbR^n$, and $m \leq n$. 
Let $\bxi$ be an
$L^2(D)$ $m$-form defined in $D$.
\begin{enumerate} 
\item If $d\bxi$ is also in $L^2(D)$, then $\bxi_t$ is well defined as
  an element of $W^{-1/2,2}(\partial D)$.
\item If $d^*\bxi$ is also in $L^2(D)$, then $i_{\bn}\bxi$ is a well defined
  element of $W^{-1/2,2}(\partial D)$.
\end{enumerate}
These results also hold if $L^2(D)$ is replaced by $W^{s,2}(D)$, with
$W^{-1/2,2}(\partial D)$ replaced by $W^{s-1/2,2}(\partial D)$.
\end{lemma}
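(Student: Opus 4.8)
The plan is to define the tangential trace by duality, using Stokes' theorem as a template, and to read off membership in the negative Sobolev space from a bounded extension operator. For a smooth $m$-form $\bxi$ and a smooth $(n-1-m)$-form $\eta$ on $\overline{D}$, Stokes' theorem applied to the $(n-1)$-form $\bxi\wedge\eta$ yields
\begin{equation}\label{eqn.stokes.prop}
  \int\limits_{\pa D}\bxi_t\wedge\eta_t=\int\limits_{D}d\bxi\wedge\eta+(-1)^m\int\limits_{D}\bxi\wedge d\eta,
\end{equation}
whose right-hand side involves only $\bxi$, $d\bxi$, and the smooth form $\eta$. This identity is the whole point: the boundary pairing of $\bxi_t$ against $\eta_t$ is expressed purely through interior quantities that remain meaningful once $\bxi$ and $d\bxi$ are merely $L^2$.

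First I would fix a bounded extension operator $E\colon W^{1/2,2}(\pa D;\Lambda^{n-1-m})\to W^{1,2}(D;\Lambda^{n-1-m})$, which exists by the standard trace theory for a smooth domain. For $\bxi\in L^2(D)$ with $d\bxi\in L^2(D)$, I define $\bxi_t$ as the functional
\begin{equation}\label{eqn.trace.def.prop}
  \phi\mapsto \int\limits_{D}d\bxi\wedge E\phi+(-1)^m\int\limits_{D}\bxi\wedge d(E\phi),\qquad \phi\in W^{1/2,2}(\pa D;\Lambda^{n-1-m}).
\end{equation}
By~\eqref{eqn.stokes.prop} this agrees with the classical restriction when $\bxi$ is smooth. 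The two integrals in~\eqref{eqn.trace.def.prop} are bounded by $(\|d\bxi\|_{L^2}+\|\bxi\|_{L^2})\|E\phi\|_{W^{1,2}}\leq C(\|d\bxi\|_{L^2}+\|\bxi\|_{L^2})\|\phi\|_{W^{1/2,2}(\pa D)}$, so the functional lies in the dual space; identifying that dual with $W^{-1/2,2}(\pa D)$ through the wedge pairing on the $(n-1)$-dimensional boundary gives $\bxi_t\in W^{-1/2,2}(\pa D)$ with the asserted bound.

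The one thing that genuinely needs checking is that~\eqref{eqn.trace.def.prop} is independent of the extension. If $E\phi$ and $E'\phi$ share the trace $\phi$, their difference $\eta$ has vanishing tangential trace and hence lies in the $W^{1,2}$-closure of $\CIc(D)$; approximating $\eta$ by $\eta_j\in\CIc(D)$ and invoking the very definition of the distributional derivative $d\bxi$ gives $\int_D d\bxi\wedge\eta_j+(-1)^m\int_D\bxi\wedge d\eta_j=0$ for each $j$, so the two functionals coincide in the limit. This settles part (1). Part (2) then follows by Hodge duality: the Hodge star $\star_n$ on $D$ converts the hypothesis $d^*\bxi\in L^2$ into $d(\star_n\bxi)\in L^2$, and under $\star_n$ the normal contraction $i_{\bn}\bxi$ is carried, up to sign and a boundary Hodge star, to the tangential trace $(\star_n\bxi)_t$; applying part (1) to $\star_n\bxi$ yields $i_{\bn}\bxi\in W^{-1/2,2}(\pa D)$. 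For the $W^{s,2}$ statement I would instead use a bounded extension $W^{1/2-s,2}(\pa D)\to W^{1-s,2}(D)$ and pair the $W^{s,2}$ forms $\bxi, d\bxi$ against $d(E\phi),E\phi\in W^{-s,2}(D)$; the identical computation places $\bxi_t$ in the dual of $W^{1/2-s,2}(\pa D)$, namely $W^{s-1/2,2}(\pa D)$.

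The main obstacle is not analytic depth but bookkeeping: keeping the form degrees, the Hodge-star identifications relating $i_{\bn}$ to tangential traces on $\pa D$, and the signs in~\eqref{eqn.stokes.prop} mutually consistent, and verifying that the extension operator maps the precise boundary Sobolev space into the interior space whose duality pairing with $\bxi$ and $d\bxi$ is finite. All of this is routine once the duality scheme above is in place.
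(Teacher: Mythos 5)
The paper offers no argument of its own for this lemma---it simply cites Taylor's book---so your proposal has to stand on its own; its duality scheme is indeed the standard route (it is how the normal-trace theorem for the space $\{u\in L^2:\nabla\cdot u\in L^2\}$ is usually proved), and the Stokes identity, the boundedness estimate, and the Hodge-star reduction of part (2) to part (1) are all sound. The genuine gap is in the well-definedness step. You assert that if $\eta=E\phi-E'\phi\in W^{1,2}(D;\Lambda^{n-1-m})$ has vanishing tangential trace, then $\eta$ lies in the $W^{1,2}$-closure of $\CIc(D)$. That is false whenever the test-form degree $n-1-m$ is at least $1$, which is exactly the case the paper needs: for $1$-forms $\bxi$ in $\bbR^3$ (the traces used to define $\cD(\lambda)$ in Theorem~\ref{thm7.2.001}) the test forms are again $1$-forms. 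The $W^{1,2}$-closure of $\CIc(D;\Lambda^k)$ consists of forms whose \emph{full} componentwise trace vanishes, while vanishing of the pullback $\eta_t$ says nothing about the normal component $i_{\bn}\eta$. Concretely, on the unit ball take $\eta=r\,dr=\tfrac12 d(r^2)$: its pullback to the sphere is zero, but its components restrict to the components of the unit normal, so $\eta\notin W^{1,2}_0(D;\Lambda^1)$ and your approximation argument cannot be run for it. Since two extensions of the same boundary form can certainly differ by such an $\eta$ (replace $E\phi$ by $E\phi+\eta$), independence of the extension is not established, and with it the definition of $\bxi_t$ is left ambiguous. (Only when $m=n-1$, so that the test objects are scalars, do tangential and full trace coincide and your argument close.)

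The gap is reparable, but it needs an ingredient your proposal does not contain. Either (a) restrict to extensions with vanishing normal component: in collar coordinates extend the tangential components of $\phi$ by a bounded scalar extension operator and set the $dr$-component to zero; then differences of admissible extensions have vanishing full trace and your $W^{1,2}_0$ argument applies verbatim. Or (b) prove, or quote, the density of forms smooth up to $\pa D$ in the graph space $\{\bxi\in L^2(D):d\bxi\in L^2(D)\}$ with its graph norm; then the defining identity holds for smooth $\bxi$ by Stokes' theorem for \emph{any} extension, and passes to the limit. Route (b) is the standard one, and it is also what makes the trace canonical: without it, different bounded extension operators could a priori yield different bounded functionals, each agreeing with the classical trace on smooth forms. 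A smaller point: in the $W^{s,2}$ version, pairing $W^{s,2}(D)$ against $W^{-s,2}(D)$ on a domain with boundary is delicate (for $s\geq 1/2$ the dual of $W^{s,2}(D)$ is not $W^{-s,2}(D)$ but the space of distributions supported in $\overline{D}$), so that step is also rougher than routine bookkeeping.
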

\noindent
The lemma is proved in~\cite{TaylorI}

It is a consequence of the lemma that if $d\bxi\in L^2(D)$, then the equation
\begin{equation}\label{eqn7.1.005}
  d_{\partial D}\bxi_t=0,
\end{equation}
makes distributional sense. The Hodge decomposition of 1-forms defined on $\partial D$,
given by
\begin{equation}\label{eqn7.2.005}
  \theta=d_{\partial D}f+d_{\partial D}^*\chi+\omega_H,\quad \text{
    where }\omega_H\in\cH^1(\partial D),
\end{equation}
extends to 1-forms with distributional coefficients. As the harmonic
forms $\cH^1(\partial D)$ provide smooth representatives for elements
of $H^1_{\dR}(\partial D)$ it makes sense to discuss the cohomology
classes of distributional solutions to~\eqref{eqn7.1.005}. If
$\{\theta_j:\:j=1,\dots,2p\}$ is an orthonormal basis for
$\cH^1(\partial D)$, then $\balpha$, a closed distributional 1-form,
is cohomologous to
\begin{equation}
  \theta=\sum\limits_{j=1}^{2p}\alpha_j\theta_j
\end{equation}
if and only if for $j=1,\dots,2p$,
\begin{equation}
  \alpha_j=\int\limits_{\partial D}\balpha\wedge\star_2\theta_j.
\end{equation}
We let $[\balpha]=[\theta]\in H^1_{\dR}(\partial D)$ denote the cohomology class of
$\balpha$.  For smooth closed 1-forms $\balpha_1,\balpha_2,$ the value of the wedge
product pairing
\begin{equation}
  \int\limits_{\partial D}\balpha_1\wedge\balpha_2
\end{equation}
only depends on the cohomology classes $[\balpha_1],[\balpha_2],$ which
provides an extension of this pairings  to
distributionally closed forms. If $\balpha_1, \balpha_2$ are distributional
solutions to~\eqref{eqn7.1.005}, then
\begin{equation}
  \int\limits_{\partial D}\balpha_1\wedge\balpha_2\overset{d}{=}
\int\limits_{\partial D}\tbalpha_1\wedge\tbalpha_2,
\end{equation}
where $\tbalpha_1, \tbalpha_2$ denote smooth representatives of these
cohomology classes.

\subsection{The Beltrami Operator}\label{sec7.1}
We begin with a study of Beltrami fields, considering
the operator
\begin{equation}
  \cB\bxi=\star_3d\bxi,
\end{equation}
acting on the space of divergence-free 1-forms on a bounded
connected domain $D$.  We let $\cE_1(D)$ be the $L^2(D)$-closure of the
divergence-free 1-forms on $D$, with the inner product defined by:
\begin{equation}
  \langle\bxi_1,\bxi_2\rangle=\int\limits_{D}\bxi_1\wedge\star_3\bxi_2.
\end{equation}
It is clear that, distributionally, $d^*\cB\bxi=0$. A simple integration by
parts shows that
\begin{equation}
  \langle
  \cB\bxi_1,\bxi_2\rangle=\langle\bxi_1,\cB\bxi_2\rangle-\int\limits_{\partial D}\bxi_1\wedge\bxi_2.
\end{equation}
We could also use complex 1-forms and the Hermitian
inner product:
\begin{equation}
  \langle\bxi_1,\bxi_2\rangle=\int\limits_{D}\bxi_1\wedge\star_3\overline{\bxi_2}.
\end{equation}
As the operator $\cB$ is real and the boundary conditions are
self-adjoint, it suffices to consider real 1-forms and therefore we
restrict our attention to the real inner product.

Let $\Lambda_H^1(\partial D)$ denote the collection of Lagrangian
subspaces of $H^1_{\dR}(\partial D)$ with respect to the standard
symplectic pairing on $\partial D$. If the genus of $\partial D$ is
$p$, then an element $\lambda\in\Lambda_H^1(\partial D)$ is a
$p$-dimensional subspace of $H^1_{\dR}(\partial D)$ such that if
$\omega_1,\omega_2$ represent cohomology classes in $\lambda$, then
\begin{equation}
  \int\limits_{\partial D}\omega_1\wedge \omega_2=0.
\end{equation}
If $W$ is a $2p$-dimensional symplectic vector space, then the set of
Lagrangian subspaces of $W$ is identified with a homogeneous space
of dimension $p(p+1)/2$.

If we let $\lambda$ be a Lagrangian subspace of $H^1_{\dR}(\partial D)$, then
the operator $\cB$ acting on divergence-free 1-forms $\bxi$ with $d_{\pa
  D}\bxi_t=0$ and $[\bxi_t]\in \lambda$ is formally symmetric.  Each of these
operators has an infinite dimensional null-space: the set of  1-forms $\{du\}$, with $u\in
W^{1,2}(D)$ and $u$ harmonic in $D$ is contained in the null-space and is of finite
codimension in the null-space.

In this section we give a careful proof that these operators are not merely
symmetric, but actually self-adjoint, and have a compact resolvent on the
orthocomplement of the null-space. For $\lambda\in \Lambda_H^1(\partial D),$ let
$\cD(\lambda)\subset\cE_1(D)$ denote the set of 1-forms such that
\begin{equation}
d\bxi\in L^2(D,\Lambda^2), \quad d_{\partial D}\bxi_t=0, \quad \text{
  and } [\bxi_t]\in \lambda.
\end{equation}
 These properties imply that 
$\bxi_t\in W^{-1/2,2}(\partial D;\Lambda^1)$ is well defined. For later applications we prove:
\begin{lemma}\label{lem7.2.008} For each $\lambda\in\Lambda_H^1(\partial D)$ the subspace $\cD(\lambda)$
is a dense subspace of $\cE_1(D).$ 
\end{lemma}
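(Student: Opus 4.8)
The plan is to prove density by showing that the orthogonal complement of $\cD(\lambda)$ in $\cE_1(D)$ is trivial. Suppose $\bomega\in\cE_1(D)$ satisfies $\langle\bomega,\bxi\rangle=0$ for every $\bxi\in\cD(\lambda)$. Being an $L^2$-limit of divergence-free forms, $\bomega$ is co-closed ($d^*\bomega=0$) distributionally, so Lemma~\ref{lem7.0.002} supplies a well-defined trace $i_{\bn}\bomega\in W^{-1/2,2}(\pa D)$. I would then exploit three easily described families of test fields that all lie in $\cD(\lambda)$.

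First, for any $u\in W^{1,2}(D)$ with $\Delta u=0$, the gradient $du$ is divergence-free, closed, and has exact (hence closed and cohomologically trivial) tangential trace, so $du\in\cD(\lambda)$. The Green identity $\langle\bomega,du\rangle=\int_{\pa D}u\,i_{\bn}\bomega$, combined with the fact that Dirichlet traces $u\restrictedto_{\pa D}$ exhaust $W^{1/2,2}(\pa D)$, forces $i_{\bn}\bomega=0$. Second, for any $\psi\in\CIc(D;\Lambda^1)$ the field $\star_3 d\psi$ is divergence-free, has $d(\star_3 d\psi)\in L^2$ and vanishing tangential trace, so it too lies in $\cD(\lambda)$; Stokes' theorem converts $\langle\bomega,\star_3 d\psi\rangle=0$ into $\int_D d\bomega\wedge\psi=0$ for all such $\psi$, whence $d\bomega=0$. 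Now $\bomega$ is a harmonic field with $i_{\bn}\bomega=0$, and since the right-hand side of the estimate~\eqref{eqn7.8.007} is finite, $\bomega\in W^{1,2}(D)$ and, by elliptic regularity, is smooth; in particular $\bomega_t$ is a genuine smooth closed $1$-form on $\pa D$ with a well-defined class $[\bomega_t]\in H^1_{\dR}(\pa D)$.

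The remaining step isolates this class. For a prescribed closed $1$-form $\theta$ on $\pa D$, I would construct a smooth $\psi$ on $D$ with $\psi\restrictedto_{T\pa D}=\theta$ and $i_{\bn}d\psi=0$ along $\pa D$ (extending $\theta$ through a collar and correcting in the normal direction). Then $\star_3 d\psi$ is divergence-free, $d(\star_3 d\psi)\in L^2$, and $(\star_3 d\psi)_t=\star_2 i_{\bn}d\psi=0$, so $\star_3 d\psi\in\cD(\lambda)$. Applying the integration-by-parts identity for $\cB$ with $\cB\bomega=\star_3 d\bomega=0$ gives $\langle\bomega,\star_3 d\psi\rangle=\int_{\pa D}\bomega_t\wedge\theta$, so orthogonality yields $\int_{\pa D}\bomega_t\wedge\theta=0$ for every closed $\theta$. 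Non-degeneracy of the wedge pairing on $H^1_{\dR}(\pa D)$ then forces $[\bomega_t]=0$. Consequently $\bomega$ itself satisfies every defining condition of $\cD(\lambda)$ (divergence-free, $d\bomega=0\in L^2$, $d_{\pa D}\bomega_t=0$, and $[\bomega_t]=0\in\lambda$), so $0=\langle\bomega,\bomega\rangle=\|\bomega\|_{L^2}^2$ and $\bomega=0$.

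I expect the main obstacle to be the construction in the last step: producing a single $\psi$ that realizes an arbitrary tangential class through $\psi\restrictedto_{T\pa D}$ while forcing $i_{\bn}d\psi=0$ so that $\star_3 d\psi$ re-enters $\cD(\lambda)$, together with justifying the boundary pairings at the merely distributional regularity available before~\eqref{eqn7.8.007} is invoked. It is worth noting that the Lagrangian hypothesis on $\lambda$ plays no role here: every test field above has trivial tangential cohomology class, so the argument in fact shows that $\cD(\lambda)$ is dense for any subspace $\lambda\subset H^1_{\dR}(\pa D)$, the Lagrangian condition being needed only afterward for the symmetry of $\cB$.
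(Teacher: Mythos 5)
Your proof is correct, but it takes a genuinely different route from the paper's. The paper argues constructively: starting from a smooth divergence-free $\bxi$ (such forms are dense in $\cE_1(D)$ by definition), it builds an explicit divergence-free corrector $\balpha$ supported in a thin collar $S_\delta$ about $\pa D$ with $\balpha_t=-\bxi_t$, by pulling back $-\bxi_t$ along the normal foliation and solving a linear ODE in the normal variable to restore $d^*\balpha=0$; since $\|\balpha\|_{L^2(D)}\lesssim |S_\delta|^{1/2}\|\bxi_t\|_{\cC^1(\pa D)}$, the form $\bxi+\balpha$ lies in $\cD(\lambda)$ (its tangential trace vanishes identically) and is $\epsilon$-close to $\bxi$. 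You instead prove triviality of the orthogonal complement, testing against harmonic gradients (to kill $i_{\bn}\bomega$), compactly supported curls (to kill $d\bomega$), and collar extensions of closed boundary forms (to kill $[\bomega_t]$), and then observing that $\bomega\in\cD(\lambda)$, hence $\bomega\perp\bomega$. Your route is softer but leans on more machinery: the trace Lemma~\ref{lem7.0.002}, the estimate~\eqref{eqn7.8.007} plus a bootstrap via~\eqref{eqn7.9.007} for smoothness of $\bomega$, and Poincar\'e duality on $\pa D$ --- all available in the paper, and indeed your first step is essentially reproduced inside the paper's proof of Theorem~\ref{thm7.2.001} when it characterizes $[\Ker(\cB,\cD(\lambda))]^{\bot}$; the paper's proof of the density lemma itself is more elementary (a cutoff and an ODE) and quantitative. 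Two small repairs to your write-up: to place $du$ in $\cE_1(D)$, take $u$ harmonic and smooth up to $\pa D$ (the Dirichlet traces of such $u$ are still dense in $W^{1/2,2}(\pa D)$, which is all you need to conclude $i_{\bn}\bomega=0$); and the construction you flag as the main obstacle is easier than you fear --- for closed $\theta$, setting $\psi=\chi(r)\Pi^*\theta$ with $\chi(0)=1$ and $\chi'(0)=0$ gives $d\psi=\chi'(r)\,dr\wedge\Pi^*\theta$, hence $i_{\bn}d\psi=0$ along $\pa D$. Your closing observation is also correct and applies equally to the paper's argument: both proofs in fact show that the forms with cohomologically trivial (for the paper, identically vanishing) tangential trace are already dense, so the Lagrangian hypothesis plays no role in density and is needed only for symmetry of $\cB$.
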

\begin{proof}
  The Hilbert space, $\cE_1(D),$ is defined as the $L^2$-closure of divergence free $\CI$
  1-forms defined in $D.$ We can  assume that $\bxi$ is smooth, and therefore
  $d\bxi\in L^2(D).$ To prove the lemma, we show that,
  given $\epsilon>0,$ there is a divergence 1-form $\balpha$ so that
  $(\bxi_t+\balpha_t)=0,$ and
  \begin{equation}
    \|\balpha\|_{L^2(D)}\leq \epsilon.
  \end{equation}
It is clear that $\bxi+\balpha\in\cD(\lambda),$ demonstrating the $\cD(\lambda)$
is dense in $\cE_1(D).$ To begin we set $\alpha'=-\bxi_t.$
 
Let $r$ denote the signed distance from $\pa D,$ normalized to be negative in
$D.$ This is known to be a smooth function in a neighborhood of $\pa D.$ For
$\delta>0$ set
  \begin{equation}
    S_{\delta}=\{-\delta\leq r\leq 0\}.
  \end{equation}
There exists $0<\delta_0$ such that each
point in $S_{\delta_0}$ has a unique point closest to $\pa D;$ we let 
\begin{equation}
  \Pi:S_{\delta_0}\longrightarrow \pa D,
\end{equation}
be the map to that closest point. Reducing $\delta_0$ if necessary, we can
assume that the lines $\{\Pi^{-1}(p):p\in \pa D\}$ smoothly foliate
$S_{\delta_0}.$ To complete the proof we show that, for any
$0<\delta<\delta_0,$ there is a divergence free 1-form, $\balpha$ supported in
$S_{\delta}$ extending $\alpha',$ for which $\|\balpha\|_{\cC^0(D)}$ is bounded
by a constant, independent of $\delta,$ times $\|\alpha'\|_{\cC^1(\pa D)}.$

For $0<\delta<\delta_0,$ choose a function $\psi\in \CI_c((-\delta,0])$ such
that $\psi(0)=1.$ In $S_{\delta}$ we set
\begin{equation}
  \balpha=\psi(r)\Pi^*\alpha'+\alpha_3 dr,
\end{equation}
which clearly extends $\alpha'.$ We need to choose $\alpha_3$ so
that
\begin{equation}
  d^*\balpha=0 \quad \text{
  and } \quad \alpha_3\restrictedto_{\{r=-\delta\}}=0.
\end{equation}
As $r$ is the geodesic distance to $\pa D$ it follows that
\begin{equation}
  d^*[\psi(r)\Pi^*\alpha']=\psi(r)d^*[\Pi^*\alpha']=\psi(r)W_1,
\end{equation}
where $W_1$ is a smooth function, independent of $\delta,$ with
\begin{equation}
  \|W_1\|_{\cC^0(S_{\delta_0})}\leq C\|\alpha'\|_{\cC^1(\pa D)}.
\end{equation}
Moreover
\begin{equation}
  d^*[\alpha_3 dr]=-N\alpha_3 +W_2\alpha_3,
\end{equation}
with $W_2$ is a smooth function independent of $\delta,$ and $N$ is the vector
field normal to the level sets of $r,$ for which $dr(N)=1,$ i.e., $N=\pa_r.$

For each $p\in \pa D,$ the linear ODE
\begin{equation}
  N\alpha_3 -W_2\alpha_3=\psi(r)W_1,
\end{equation}
with the initial condition $\alpha_3(-\delta,p)=0$ is easily solved on the
interval $[-\delta,0].$  It is clear that there is a constant, $C,$ independent of
$\delta$ so that
\begin{equation}
 \|\alpha_{3}\|_{\cC^0(S_{\delta})}\leq C\|\alpha'\|_{\cC^1(\pa D)}.
    \end{equation}
If we simply extend $\balpha$ to $D\setminus S_{\delta}$ by zero, then it is
elementary to see that the extended form is smooth and divergence free. A
simple calculation shows that if $|S_{\delta}|$ is the volume of $S_{\delta},$ then
\begin{equation}
  \|\balpha\|_{L^2(D)}\leq
  |S_{\delta}|^{\frac 12}[\|\alpha_{3}\|_{\cC^0(S_{\delta})}+\|\alpha'\|_{\cC^0(\pa D)}]
\leq (1+C)|S_{\delta}|^{\frac 12}\|\alpha'\|_{\cC^1(\pa D)}.
\end{equation}
By choosing $0<\delta$  sufficiently small this can be made smaller than $\epsilon.$

\end{proof}

\noindent
We now turn to the self-adjointness of the unbounded operators $(\cB,\cD(\lambda)).$
\begin{theorem}\label{thm7.2.001} 
For each $\lambda\in \Lambda_H^1(\partial D)$, the unbounded operator
$\cB$ with dense domain $\cD(\lambda)\subset\cE_1(D)$ is self-adjoint. It
has an infinite dimensional null-space. Its restriction to
$[\Ker(\cB,\cD(\lambda))]^{\bot}$ has a compact resolvent. For each
$\lambda\in\Lambda_H^1(\partial D)$ this restriction has a discrete
spectrum $\{k_j(\lambda)\}$ tending to $\pm\infty$. The eigenforms
$\{\bxi_j(\lambda)\}$ corresponding to non-zero eigenvalues are smooth
in the closure of $D$ and satisfy the boundary condition
  \begin{equation}
    i_{\bn}\bxi_j(\lambda)=0.
  \end{equation}
\end{theorem}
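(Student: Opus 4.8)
The plan is to establish the four assertions in order: symmetry, self-adjointness, discreteness of the nonzero spectrum, and smoothness of the eigenforms. I would begin with symmetry, which the integration-by-parts identity recorded above,
\begin{equation*}
  \langle\cB\bxi_1,\bxi_2\rangle=\langle\bxi_1,\cB\bxi_2\rangle-\int\limits_{\partial D}\bxi_1\wedge\bxi_2,
\end{equation*}
reduces to the vanishing of the boundary term. Since the wedge of two $1$-forms on $\pa D$ only sees their tangential parts, this term equals $\int_{\partial D}(\bxi_1)_t\wedge(\bxi_2)_t$; for $\bxi_1,\bxi_2\in\cD(\lambda)$ both traces are closed (as $d_{\pa D}\bxi_t=0$) with cohomology classes lying in $\lambda$, so the integral vanishes because $\lambda$ is Lagrangian for the wedge pairing. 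This shows $(\cB,\cD(\lambda))$ is symmetric, and because the conditions $d_{\pa D}\bxi_t=0$ and $[\bxi_t]\in\lambda$ are closed in the graph norm $\|\bxi\|_{L^2}+\|d\bxi\|_{L^2}$ (here Lemma~\ref{lem7.0.002} makes $\bxi_t$ depend continuously on the graph data), the operator $\cB$ is closed.

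The crux, and the step I expect to demand the most care, is identifying the adjoint domain. The adjoint $\cB^*$ is a restriction of the maximal operator $\bxi\mapsto\star_3 d\bxi$ on $\{\bxi\in\cE_1(D):d\bxi\in L^2\}$, cut out by boundary conditions: a form $\eta$ lies in $\cD(\cB^*)$ precisely when $\int_{\partial D}\bxi\wedge\eta=0$ for every $\bxi\in\cD(\lambda)$. Testing against the exact tangential traces $d_{\pa D}f$ attainable in $\cD(\lambda)$ and integrating by parts on the closed surface $\pa D$ forces $d_{\pa D}\eta_t=0$; testing then against harmonic representatives ranging over $\lambda$ forces $[\eta_t]$ to annihilate $\lambda$ under the wedge pairing, hence $[\eta_t]\in\lambda$ by maximal isotropy. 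Thus $\cD(\cB^*)=\cD(\lambda)$ and $\cB$ is self-adjoint. The delicate points are that this boundary pairing must be read through the distributional traces of Lemma~\ref{lem7.0.002}, and that it suffices to test against the smooth elements of $\cD(\lambda)$, which are dense in the graph norm by the construction in the proof of Lemma~\ref{lem7.2.008}.

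Next I would treat the null-space and the resolvent together. The gradients $du$ of functions $u\in W^{1,2}(D)$ harmonic in $D$ lie in $\cD(\lambda)$ for every $\lambda$ and are killed by $\cB$, so $\Ker(\cB,\cD(\lambda))$ is infinite dimensional. If $\bxi\in\cD(\lambda)$ is orthogonal to this kernel, then using $d^*\bxi=0$ and Green's formula one finds $\int_{\partial D}u\,(i_{\bn}\bxi)\,dA=0$ for all such $u$; since boundary values of harmonic functions are dense, this gives $i_{\bn}\bxi=0$. Feeding $d^*\bxi=0$ and $i_{\bn}\bxi=0$ into~\eqref{eqn7.8.007} yields $\|\bxi\|_{W^{1,2}(D)}\le C(\|\cB\bxi\|_{L^2}+\|\bxi\|_{L^2})$, so the domain of the restriction $\cB_0$ to $[\Ker(\cB,\cD(\lambda))]^{\bot}$ embeds continuously into $W^{1,2}(D)$; Rellich's theorem then makes the inclusion into $\cE_1(D)$ compact, which is exactly compactness of the resolvent of $\cB_0$. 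Self-adjointness forces the spectrum to be real, and a compact resolvent forces it to consist of isolated eigenvalues $\{k_j(\lambda)\}$ of finite multiplicity accumulating only at $\pm\infty$.

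Finally, for regularity, an eigenform $\bxi$ with $k\neq 0$ is orthogonal to $\Ker(\cB,\cD(\lambda))$, so the argument just given yields $i_{\bn}\bxi=0$. From $\star_3 d\bxi=k\bxi$ together with $d^*\bxi=0$ one computes $\Delta\bxi=-k^2\bxi$, and the conormal datum $i_{\bn}d\bxi$ is expressible through $\bxi_t$ and hence controlled by the trace of $\bxi$. I would then bootstrap with~\eqref{eqn7.9.007}: starting from $\bxi\in W^{1,2}(D)$, each application upgrades $\bxi\in W^{j,2}(D)$ to $\bxi\in W^{j+1,2}(D)$, since the right-hand side involves only $\Delta\bxi=-k^2\bxi$, the vanishing normal trace $i_{\bn}\bxi=0$, and $i_{\bn}d\bxi$ one Sobolev order below the target. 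Iterating and invoking Sobolev embedding gives $\bxi\in\CI(\overline D)$, and the identity $i_{\bn}\bxi=0$ is the asserted boundary condition.
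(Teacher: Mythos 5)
Your symmetry computation, the identification of the kernel, the deduction that forms orthogonal to the kernel have $i_{\bn}\bxi=0$, the Rellich argument via~\eqref{eqn7.8.007}, and the bootstrap via~\eqref{eqn7.9.007} are all sound and parallel the paper's proof. The gap is in the one step that carries the real content of self-adjointness: your opening assertion that ``the adjoint $\cB^*$ is a restriction of the maximal operator $\bxi\mapsto\star_3d\bxi$ on $\{\bxi\in\cE_1(D):d\bxi\in L^2\}$.'' This cannot be taken for granted, because the test forms available in $\cD(\lambda)$ are constrained to be divergence free. If $\eta\in\cD(\cB^*)$ with $\cB^*\eta=\zeta$, then pairing against smooth, compactly supported, divergence-free $\bxi$ and using de Rham's description of the annihilator of solenoidal fields yields only
\begin{equation*}
\star_3 d\eta=\zeta+dp \quad\text{distributionally, with } p \text{ harmonic in } D,
\end{equation*}
and nothing in your argument forces $dp\in L^2(D)$. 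This is not a vacuous worry. On the unit ball set $p_n=r^nY_n$ with $\|Y_n\|_{L^2(S^2)}=1$ and $\eta_n=-\tfrac{1}{n+1}\,\br\times\nabla p_n$; then $\nabla\cdot\eta_n=0$, $\nabla\times\eta_n=\nabla p_n$, $\|\nabla p_n\|_{L^2}^2=n$, while $\|\eta_n\|_{L^2}\le\sqrt{n}/(n+1)$. Taking $c_n=(\sqrt{n}\,\log n)^{-1}$ and $\eta=\sum_n c_n\eta_n$ (the sum converges in $L^2$ by orthogonality of distinct spherical-harmonic degrees) produces a divergence-free $L^2$ form with $\star_3d\eta=dp\notin L^2(D)$, for which the functional $\bxi\mapsto\langle\cB\bxi,\eta\rangle$ nevertheless vanishes on every smooth, compactly supported, divergence-free $\bxi$. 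Your proof never engages with such $\eta$: since $d\eta\notin L^2$, Lemma~\ref{lem7.0.002} provides no trace $\eta_t$, so the boundary pairing $\int_{\pa D}\bxi\wedge\eta$ you test with is not even defined, and the steps forcing $d_{\pa D}\eta_t=0$ and $[\eta_t]\in\lambda$ cannot get started. Excluding all such $\eta$ from $\cD(\cB^*)$ is exactly the difference between symmetry and self-adjointness, and it is the part of the theorem your proposal assumes rather than proves.

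This is precisely the difficulty the paper's proof is engineered to avoid: it never computes $\cD(\cB^*)$ at all. Instead it verifies the hypotheses of Lemma~\ref{lem7.5} by explicitly constructing, via Hodge theory (the partial inverse $R_{2t}$ of the Dirichlet Laplacian on 2-forms, the forms $\mu_{j0}$, and a finite-rank correction placing the boundary cohomology class in $\lambda$), a right inverse $Q$ of $\cB$ on $[\Ker(\cB,\cD(\lambda))]^{\bot}$ taking values in $W^{1,2}(D;\Lambda^1)$ with norm bounds. A Neumann series then shows $\cB-\theta$ is surjective for all small real $\theta\neq 0$, and symmetry plus surjectivity gives self-adjointness by Theorem 13.11(d) of~\cite{RudinFA}; the $W^{1,2}$ bound on $Q$ delivers the compact resolvent simultaneously. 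If you wish to keep your route of identifying the adjoint domain, you must first prove that $\eta\in\cD(\cB^*)$ implies $d\eta\in L^2(D)$ --- and the natural way to do that is to produce enough solutions of $\star_3 d\bxi=(\text{given data})$ inside $\cD(\lambda)$ to test against, at which point you will have essentially reconstructed the paper's surjectivity argument.
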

\begin{remark} Here and in the sequel we use the notation $(\cB,\cD(\lambda))$
  to denote the operator $\cB$ acting, distributionally, on elements of the
  domain $\cD(\lambda).$
\end{remark}
This theorem, with a rather different proof, appears
in~\cite{HKTSACurlBD}. We give a simple, self-contained proof of this
theorem that relies only on standard Hodge theory and
Lemma~\ref{lem7.0.002}. Later in this section we indicate how to
generalize this result to the full Maxwell system.

There is a distinguished element $\lambda_D$ of $\Lambda_H^1(\partial
D)$ consisting of the image of $H^1_{\dR}(D)\hookrightarrow
H^1_{\dR}(\partial D)$. For this choice of Lagrangian
subspace let $\{k_j(\lambda_D),\bxi_j(\lambda_D)\}$ be
an enumeration of the eigenpairs. As above we let $\{A_j:\: j=1,\dots,p\}$ be a basis for the
cycles on $\partial D$ that bound chains, $\{S_j\}$, in $D$, i.e.,
$\partial S_j=A_j$. A closed 1-form $\balpha$ on $\partial D$ belongs to
$\lambda_D$ if and only if
\begin{equation}
  \int\limits_{A_j}\balpha=0 \quad \text{ for }j=1,\dots,p.
\end{equation}
\begin{theorem}\label{thm7.3.002} 
Let $\bxi$ be an eigenform of $(\cB,\cD(\lambda_D))$ with
  eigenvalue $k\neq 0$, then $\bxi_n=0,$ and for $1\leq j\leq p$ we have that 
  \begin{equation}\label{eqn7.20.001}
    \int\limits_{S_j}\star_3\bxi=0.
  \end{equation}
Moreover, eigenforms of $\cB$ with $\bxi_n=0$ that
satisfy~\eqref{eqn7.20.001} belong to $\cD(\lambda_D)$.
\end{theorem}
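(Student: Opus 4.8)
The plan is to treat the two assertions separately, noting first that the vanishing of the normal component, $\bxi_n=i_{\bn}\bxi=0$, is already supplied by Theorem~\ref{thm7.2.001}, so the real content is the flux identity~\eqref{eqn7.20.001} together with its converse. The single computational engine behind both directions is the Beltrami relation: rewriting the eigenvalue equation $\cB\bxi=\star_3 d\bxi=k\bxi$ and applying $\star_3$ (using $\star_3^2=\Id$ on $2$-forms in $\bbR^3$) gives
\begin{equation}
  d\bxi=k\star_3\bxi.
\end{equation}
Thus $k\star_3\bxi$ is exact, and one connects the flux of $\star_3\bxi$ through $S_j$ to a period of $\bxi_t$ on $\pa D$ by Stokes' theorem.

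For the forward direction I would argue as follows. By Theorem~\ref{thm7.2.001} the eigenform $\bxi$ is smooth on $\overline{D}$, so Stokes' theorem applies classically on the chain $S_j$ with $\pa S_j=A_j$:
\begin{equation}
  k\int\limits_{S_j}\star_3\bxi=\int\limits_{S_j}d\bxi=\int\limits_{A_j}\bxi=\int\limits_{A_j}\bxi_t,
\end{equation}
the last equality because integration over the curve $A_j\subset\pa D$ sees only the tangential part $\bxi_t$. Since $\bxi\in\cD(\lambda_D)$ the form $\bxi_t$ is closed and $[\bxi_t]\in\lambda_D$, so by the characterization of $\lambda_D$ recalled just above the theorem its $A$-periods vanish, $\int_{A_j}\bxi_t=0$. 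Dividing by $k\neq 0$ yields~\eqref{eqn7.20.001}.

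For the converse I would verify the conditions defining $\cD(\lambda_D)$ for a form $\bxi$ satisfying $\star_3 d\bxi=k\bxi$, $\bxi_n=0$, and $\int_{S_j}\star_3\bxi=0$. That $\bxi$ is divergence free follows from the remark that $d^*\cB\bxi=0$ distributionally, whence $d^*\bxi=0$ for $k\neq 0$ and $\bxi\in\cE_1(D)$; moreover $d\bxi=k\star_3\bxi\in L^2(D)$, so by Lemma~\ref{lem7.0.002} the trace $\bxi_t\in W^{-1/2,2}(\pa D)$ is well defined. Pulling the Beltrami relation back to $\pa D$ gives $d_{\pa D}\bxi_t=(d\bxi)_t=k(\star_3\bxi)_t$; the identity $(\star_3\bxi)_t=(i_{\bn}\bxi)\,dA=\bxi_n\,dA$ then forces $d_{\pa D}\bxi_t=0$, so $\bxi_t$ is distributionally closed. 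Running the Stokes computation of the previous paragraph in reverse, $\int_{A_j}\bxi_t=k\int_{S_j}\star_3\bxi=0$, so $[\bxi_t]\in\lambda_D$ and therefore $\bxi\in\cD(\lambda_D)$.

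The step I expect to be the main obstacle is making the flux/period identity rigorous in the converse direction, where a priori $\bxi$ is only an $L^2$ eigenform and classical Stokes is unavailable. I would handle this by observing that $\star_3\bxi$ is a closed $L^2$ $2$-form (since $d\star_3\bxi=-\star_3 d^*\bxi=0$), so its relative periods over $S_j$ --- equivalently the distributional $A$-periods of the closed trace $\bxi_t\in W^{-1/2,2}(\pa D)$ --- are well defined via the distributional-pairing machinery established at the end of Section~\ref{sec6} together with Lemma~\ref{lem7.0.002}. The only convention-dependent ingredient is the boundary identity $(\star_3\bxi)_t=(i_{\bn}\bxi)\,dA$, which I would check once against the paper's orientation conventions. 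Once membership in $\cD(\lambda_D)$ is secured, Theorem~\ref{thm7.2.001} retroactively promotes $\bxi$ to a smooth eigenform, which legitimates the classical form of the argument.
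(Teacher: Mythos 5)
Your proof is correct and follows essentially the same route as the paper's: both directions rest on the Stokes identity $\int_{S_j}\star_3\bxi=\tfrac{1}{k}\int_{A_j}\bxi_t$ together with the characterization of $\lambda_D$ by vanishing $A$-periods, with $\bxi_n=0$ coming from the restriction of $d\bxi=k\star_3\bxi$ to $\pa D$. The only differences are cosmetic --- you cite Theorem~\ref{thm7.2.001} for $\bxi_n=0$ where the paper rederives it, and you are somewhat more careful than the paper about distributional traces in the converse direction.
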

\begin{remark} 
These eigenforms might be called ``zero-flux, constant-$k,$ force-free Beltrami
fields''. This result generalizes Kress's Theorem 2.5, in~\cite{kress2}, which assumes that $D$
has genus 1. 
\end{remark}
\begin{proof}[Proof of Theorem~\ref{thm7.3.002}] 
The eigenform satisfies the equation
  \begin{equation}
    d\bxi=k\star_3\bxi
  \end{equation}
and therefore, as $k\neq 0$,
\begin{equation}\label{eqn7.22.001}
  \int\limits_{S_j}\star_3\bxi=\frac{1}{k}\int\limits_{A_j}\bxi_t.
\end{equation}
By assumption $\bxi_t$ is a closed form; the eigenvalue equation then
implies that $\bxi_n=0$. As $\bxi_t=df+\omega_H$, where
$\omega_H\in\lambda_D$, there exists a closed 1-form $\balpha$ defined
in $D$ for which
\begin{equation}
  \bxi_t=\balpha_t.
\end{equation}
Therefore
\begin{equation}
  \int\limits_{A_j}\bxi_t=\int\limits_{A_j}\balpha_t=\int\limits_{S_j}d\balpha=0.
\end{equation}

It is not hard to see that the converse statement holds as well. If $\bxi$ is a
divergence-free 1-form with $\bxi_n=0$ satisfying $\cB\bxi=k\star_3\bxi$, and
the vanishing flux conditions in~\eqref{eqn7.20.001}, then $\bxi$ belongs to
$\cD(\lambda_D)$.  From the equation we conclude that $\bxi_t$ is closed and
therefore
\begin{equation}
  \bxi_t=df+\omega_H.
\end{equation}
The integration by parts argument giving~\eqref{eqn7.22.001} shows that
for each $j = 1, \ldots, p$, 
\begin{equation}
  \int\limits_{A_j}\omega_H=0.
\end{equation}
The assertion follows from the fact that these $p$ linear conditions define the
image of $H^1_{\dR}(D)$ in $H^1_{\dR}(\partial D)$.
\end{proof}

Now we turn to the proof of Theorem~\ref{thm7.2.001} for which we use the following
lemma.
\begin{lemma}\label{lem7.5} 
Let $(X,\|\cdot\|_X)$ and $(Y,\|\cdot\|_Y)$ be Hilbert spaces with
$Y\subset X$. For each $y\in Y$, suppose that we have the estimate
\begin{equation}\label{eqn7.27.001}
   \|y\|_X\leq\|y\|_Y.
\end{equation}
 Let $L$ be a closed, symmetric operator defined on a dense domain $\mathcal D(L)$,
\[
Y\subset \mathcal D(L)\subset X.
\]
Let $\pi_0$ be the $X$-orthogonal projection onto
 $\Ker L$, and $\pi_1=\Id-\pi_0$. Suppose that there is an operator $Q$,
\[
Q:[\Ker L]^{\bot}\to Y\cap [\Ker L]^{\bot}
\]
with $LQ\pi_1=\pi_1$. Suppose further that $Q$ satisfies an estimate
of the form:
\[
\|Q\pi_1 x\|_Y\leq C\|x\|_X.
\]
There is an $\epsilon>0$ so that the operators $(L-\mu)$ are
invertible for $0<|\mu|<\epsilon$, and therefore $(L,\mathcal D(L))$ is
self-adjoint.
\end{lemma}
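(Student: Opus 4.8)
The plan is to establish invertibility of $L-\mu$ by a Neumann-series (parametrix) argument and then deduce self-adjointness from the standard criterion for closed symmetric operators: a closed symmetric $L$ is self-adjoint as soon as $\operatorname{Ran}(L-\mu)=X$ for some $\mu$ with $\operatorname{Im}\mu>0$ together with $\operatorname{Ran}(L-\bar\mu)=X$. Thus it suffices to show that $L-\mu$ is surjective (in fact boundedly invertible) for all sufficiently small $\mu\neq0$, after which specializing to $\mu=\pm it$ finishes the self-adjointness claim. The one structural fact I would record at the outset is that symmetry forces $\operatorname{Ran}L\subseteq[\Ker L]^{\bot}$: for $u\in\mathcal D(L)$ and $v\in\Ker L\subseteq\mathcal D(L)$ one has $\langle Lu,v\rangle=\langle u,Lv\rangle=0$. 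Hence $\pi_0 Lu=0$ for every $u\in\mathcal D(L)$, and the projections $\pi_0,\pi_1$ block-diagonalize the problem.

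For the construction I would first observe that $Q$ is bounded on $X$: the hypothesis $\|Q\pi_1 x\|_Y\le C\|x\|_X$ combined with the embedding inequality $\|y\|_X\le\|y\|_Y$ gives $\|Q\pi_1 x\|_X\le C\|x\|_X$, while $Q$ maps $[\Ker L]^{\bot}$ into itself. Given $f\in X$, I would solve $(L-\mu)u=f$ by splitting $u=-\tfrac1\mu\pi_0 f+u_1$ with $u_1\in[\Ker L]^{\bot}$: applying $\pi_0$ uses $\pi_0 Lu=0$ to force $\pi_0 u=-\tfrac1\mu\pi_0 f$, and applying $\pi_1$ reduces the problem to $(L-\mu)u_1=\pi_1 f$ on the orthocomplement. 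Here I invoke the right-inverse identity $LQ\pi_1=\pi_1$: setting $u_1=Qv$ turns the equation into $(\Id-\mu Q)v=\pi_1 f$, which is solvable by the Neumann series $\sum_{n\ge0}\mu^n Q^n$ whenever $|\mu|<1/C$. Since $Qv\in Y\cap[\Ker L]^{\bot}\subset\mathcal D(L)$, the assembled $u$ lies in $\mathcal D(L)$ and satisfies $(L-\mu)u=f$, producing a bounded right inverse $R_\mu$ for $0<|\mu|<\epsilon:=1/C$.

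Injectivity is the step I expect to be the crux. Suppose $(L-\mu)u=0$; the $\pi_0$-equation gives $\pi_0 u=0$, so $u\in[\Ker L]^{\bot}$ and $Lu=\mu u$. Now I would apply the right-inverse identity to $g:=\mu u\in[\Ker L]^{\bot}$: this yields $L(\mu Qu)=\mu u=Lu$, so $L(\mu Qu-u)=0$, i.e. $\mu Qu-u\in\Ker L$; but $\mu Qu-u\in[\Ker L]^{\bot}$ as well, whence $u=\mu Qu$. The $X$-bound on $Q$ then gives $\|u\|_X\le|\mu|\,C\,\|u\|_X$, forcing $u=0$ once $|\mu|<\epsilon$. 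Thus $L-\mu$ is bijective with bounded inverse for $0<|\mu|<\epsilon$; taking $\mu=\pm it$ with $0<t<\epsilon$ gives $\operatorname{Ran}(L\mp it)=X$, and the basic criterion for closed symmetric operators delivers self-adjointness. The main obstacles I anticipate are bookkeeping — verifying that each constructed element genuinely lies in $\mathcal D(L)$ (resting on $\operatorname{range}Q\subseteq Y$) and that the splitting respects $L$ (resting on $\operatorname{Ran}L\subseteq[\Ker L]^{\bot}$) — together with the one genuinely clever point, the identity $u=\mu Qu$, which converts the eigenvalue relation into a contraction estimate.
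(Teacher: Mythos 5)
Your proof is correct and follows essentially the same route as the paper: both build the inverse of $L-\mu$ from the identical Neumann series $R_\mu x = Q\bigl[\sum_{j\ge 0}(\mu Q)^j\pi_1 x\bigr] - \mu^{-1}\pi_0 x$, using the iterated estimate $\|Q^j\pi_1 x\|_Y\le C^j\|x\|_X$ together with $Y\subset\mathcal{D}(L)$ to check that $R_\mu x\in\mathcal{D}(L)$ and that $(L-\mu)R_\mu=\Id$. The only differences are cosmetic endgame choices: you prove injectivity directly via the contraction identity $u=\mu Qu$ (valid for complex $\mu$) and conclude with the deficiency-index criterion at $\mu=\pm it$, whereas the paper deduces injectivity from symmetry plus surjectivity at small real $\mu$ and cites Rudin's Theorem 13.11(d) that a surjective symmetric operator is self-adjoint.
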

\begin{proof} 
Formally it is easy to see that $(L-\mu)^{-1}$ is given by
  \begin{equation}\label{7.28.001}
    R_{\mu} x=Q\left[\sum_{j=0}^{\infty}(\mu Q)^j\pi_1
      x\right]-\frac{\pi_0 x}{\mu}.
  \end{equation}
We need to show that this series converges (in an appropriate sense) and
takes values in $\mathcal D(L)$. The term $\frac{\pi_0 x}{\mu}$ obviously
belongs to $\mathcal D(L)$.  By assumption, there is a constant $C$ so that
\begin{equation}
  \|Q\pi_1x\|_Y\leq C\|x\|_X.
\end{equation}
As $\pi_0Q=0$, using~\eqref{eqn7.27.001} we can iterate this estimate
to conclude that
\begin{equation}
  \|Q^j\pi_1 x\|_Y\leq C^j\|x\|_X.
\end{equation}
This allows us to conclude that for $\mu<C^{-1}$ the series
in~\eqref{7.28.001} converges in $Y$, which implies that this term is
also in $\mathcal D(L)$. Applying $L-\mu$ to this sum we get
\begin{equation}\label{eqn7.40.008}
\begin{split}
  (L-\mu)R_{\mu}x&=\sum_{j=0}^{\infty}(\mu Q)^j\pi_1 x-\mu
  Q\sum_{j=0}^{\infty}(\mu Q)^j\pi_1 x+\pi_0x\\
&=\pi_1x+\pi_0x=x.
\end{split}
\end{equation}
If $0<|\mu|<\|Q\|^{-1}$, then $L-\mu$ is surjective. Since $L-\mu$ is symmetric
for real $\mu,$ equation~\eqref{eqn7.40.008} implies that $(L-\mu)$ is
injective.  Theorem 13.11(d) in~\cite{RudinFA} shows that $(L,\mathcal D(L))$
is a self-adjoint operator, thereby completing the proof of the lemma.
\end{proof}

\begin{proof}[Proof of Theorem~\ref{thm7.2.001}]
To apply this to the operator $(\cB,\cD(\lambda))$ we need to identify
$Y$ and the operator $Q$.  The space $Y$ is the subspace of
$W^{1,2}(D;\Lambda^1)$ consisting of forms that satisfy the conditions
\begin{equation}
  d^*\bxi=0 \quad \text{and} \quad d_{\partial D}\bxi_t=0
\end{equation}
with $[\bxi_t]\in \lambda$.  For the norm, we use the
$W^{1,2}(D;\Lambda^1)$-norm which obviously satisfies
\begin{equation}
  \|\bxi\|_{L^2}\leq \|\bxi\|_{W^{1,2}}.
\end{equation}
It is  clear that $Y\subset\cD(\lambda)$. 

Let $\{\psi_1,\dots,\psi_p\}$ denote an orthonormal basis of
$\cH^1(D)$. These forms satisfy
\begin{equation}
\begin{split}
  d\psi_j=d^*\psi_j &=0, \\
  i_{\bn}\psi_j&=0 \qquad \text{on } \partial D,
\end{split}
\end{equation}
and
\begin{equation}
  \int\limits_{D}\psi_i\wedge\star_3\psi_j=\delta_{ij}.
\end{equation}
We now need to characterize $\Ker(\cB,\cD(\lambda))$. These are
$L^2$ 1-forms, $\bxi$, that satisfy the equations $d^*\bxi=d\bxi=0$ and
the boundary condition $[\bxi_t]\in\lambda$. Each such form has a
decomposition
\begin{equation}
  \bxi=du+\sum_{j=1}^p\alpha_j\psi_j.
\end{equation}
Here $u$ is a harmonic function determined by the condition
\begin{equation}
  i_{\bn}\bxi=\frac{\pa u}{\pa\bn}.
\end{equation}
Observe that  as $d^*\bxi=0$, we see that
\begin{equation}
  \int\limits_{\partial D}i_{\bn}\bxi dA=\int\limits_{\partial
    D}\star_3\bxi=\int\limits_{D}d\star_3\bxi=0,
\end{equation}
and therefore $u$ exists. Furthermore, Lemma~\ref{lem7.0.002} and the closed
graph theorem show that there are constants $C$ and $C'$ so that
\begin{equation}
  \|u\|_{W^{1,2}(D)}\leq C\|i_{\bn}\bxi\|_{H^{-\frac 12}(\partial
    D)}\leq C'\|\bxi\|_{L^2(D)}.
\end{equation}
The coefficients $\{\alpha_j\}$ are given by
\begin{equation}
  \alpha_j=\langle\bxi-du,\psi_j\rangle.
\end{equation}

The boundary condition defining $\cD(\lambda)$ requires that 
\begin{equation}
  \left[\sum_{j=1}^p\alpha_j[\psi_{jt}]\right]\in\lambda_D\cap\lambda,
\end{equation}
where as before $\lambda_D$ is the Lagrangian subspace determined by
$H^1_{\dR}(D)\hookrightarrow H^1_{\dR}(\partial D)$. Generically, this
intersection is just $\{0\}$. Letting $q=\dim \lambda_D\cap\lambda$,
we can  normalize the basis of harmonic 1-forms on $D$ so that
$\{\psi_1,\dots,\psi_q\}$ spans
$\Ker(\cB,\cD(\lambda))\cap\cH^1(D)$. This means that
$\{[\psi_{1t}],\dots,[\psi_{qt}]\}$ span $\lambda_D\cap\lambda$, and
\begin{equation}
  \int\limits_{D}\psi_i\wedge\star_3\psi_j=0
\end{equation}
for $1\leq i\leq q$ and $q+1\leq j\leq p$.  With these normalizations,
we have
\begin{equation}
  \Ker(\cB,\cD(\lambda))= \left\{ du+\sum_{j=1}^q\gamma_j\psi_j : u
  \in W^{1,2}(D), \, u \text{ harmonic}, \, \gamma_j\in\bbR \right\}.
\end{equation}

If a 1-form $\bEta\in\cE_1(D)$ is in $[\Ker(\cB,\cD(\lambda))]^{\bot}$, then
for every harmonic function $u\in W^{1,2}(D),$ we have:
\begin{equation}
\begin{split}
  0=\langle du,\bEta\rangle_D&=\langle u,d^*\bEta\rangle_D+\langle
  u,i_{\bn}\bEta\rangle_{\partial D}\\
&=\langle u,i_{\bn}\bEta\rangle_{\partial D}.
\end{split}
\end{equation}
Lemma~\ref{lem7.0.002} implies that this calculation makes sense as
$u\restrictedto_{\pa D}\in W^{1/2,2}(\pa D)$ and $i_{\bn}\bEta\in
W^{-1/2,2}(\pa D).$ As $u\restrictedto_{\pa D}$ is an arbitrary element of
$W^{1/2,2}(\pa D)$ we see that $i_{\bn}\bEta=0$. We also must require that
\begin{equation}
  \langle \psi_j,\bEta\rangle_{D}=0\quad \text{ for }j=1,\dots,q.
\end{equation}

We now construct the solution operator required by the lemma for
$\bEta$ satisfying these conditions. Such a 1-form can be written as
\begin{equation}\label{eqn7.43.003}
  \bEta=\bEta_0+\sum_{j=q+1}^{p}\alpha_j\psi_j,
\end{equation}
where $i_{\bn}\bEta_0=0$ and $\bEta_0$ is orthogonal to $\cH^1(D)$. We would
like to solve
\begin{equation}
  d\omega=\star_3\bEta_0+\sum_{j=q+1}^{p}\alpha_j\star_3\psi_j,
\end{equation}
with $\omega\in Y$.  We begin with some existence results that are
independent of the choice of $\lambda$.
\begin{lemma}
  If $\eta_0$ is a divergence-free 1-form in $D$ orthogonal to
  $\cH^1(D)$ for which $i_{\bn}\eta_0=0$, then there exists a
  divergence-free 1-form $\omega_0$ orthogonal to $\cH^1(D)$
  satisfying:
  \begin{equation}\label{eqn7.45.0004}
    \begin{split}
      \star_3d\omega_0&=\eta_0,\\ 
      d_{\partial D}\omega_{0t}&=0,\\
      i_{\bn}\omega_0&=0.
    \end{split}
  \end{equation}
Moreover, there is a constant $C$ such that
  \begin{equation}\label{eqn7.50.007}
      \|\omega_0\|_{W^{1,2}(D)}\leq C\|\bEta_0\|_{L^2(D)}.
  \end{equation}
\end{lemma}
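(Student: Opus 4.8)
The plan is to read the three desired identities as the problem of producing, for the closed $2$-form $\beta:=\star_3\eta_0$, a vector potential in Coulomb gauge with vanishing normal component. First I would record two elementary facts. Since $d^*=-\star_3 d\star_3$ on $1$-forms, the hypothesis $d^*\eta_0=0$ is exactly $d\beta=d\star_3\eta_0=0$, so $\beta$ is (distributionally) closed; and since $i_{\bn}\eta_0=0$ its pullback to the boundary is $\beta_t=(i_{\bn}\eta_0)\,dA=0$, which is meaningful by Lemma~\ref{lem7.0.002}. Thus $\beta$ represents a relative de Rham class $[\beta]\in H^2_{\dR}(D,\pa D)$. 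The crucial observation is that the remaining hypothesis $\eta_0\perp\cH^1(D)$ is precisely the vanishing of this class: for each basis form $\psi_j$ of $\cH^1(D)$ one has $\beta\wedge\psi_j=\star_3\eta_0\wedge\psi_j=\langle\eta_0,\psi_j\rangle\,dV$, so $\int_D\beta\wedge\psi_j=\langle\eta_0,\psi_j\rangle=0$; as the Lefschetz pairing $H^2_{\dR}(D,\pa D)\otimes H^1_{\dR}(D)\to\bbR$ is nondegenerate and $\{[\psi_j]\}$ is a basis of $H^1_{\dR}(D)$, this forces $[\beta]=0$.

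With the topological obstruction removed I would build $\omega_0$ in two stages. Relative Hodge theory supplies a $1$-form $\tilde\omega\in W^{1,2}(D)$ with $d\tilde\omega=\beta$ and $\tilde\omega_t=0$, together with $\|\tilde\omega\|_{W^{1,2}}\le C\|\beta\|_{L^2}$; this is a potential, but in the wrong gauge and with the Dirichlet rather than the Neumann boundary condition. I would correct it by a scalar gauge transformation $\omega_0=\tilde\omega+dh$, choosing $h$ to solve the Neumann problem $\Delta h=d^*\tilde\omega$ in $D$ with $\tfrac{\pa h}{\pa\bn}=-i_{\bn}\tilde\omega$ on $\pa D$. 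Its compatibility condition is met because the divergence theorem gives $\int_D d^*\tilde\omega\,dV=-\int_{\pa D}i_{\bn}\tilde\omega\,dA$. Since $d^*dh=-\Delta h=-d^*\tilde\omega$ and $i_{\bn}dh=\tfrac{\pa h}{\pa\bn}$, the corrected form satisfies $d^*\omega_0=0$ and $i_{\bn}\omega_0=0$, while $d\omega_0=d\tilde\omega=\beta$, that is $\star_3 d\omega_0=\eta_0$, is unchanged because $d(dh)=0$. Finally, subtracting the $\cH^1(D)$-component leaves $d\omega_0$, $d^*\omega_0$ and $i_{\bn}\omega_0$ untouched, since harmonic fields in $\cH^1(D)$ are closed, coclosed and normal, so I may assume $\omega_0\perp\cH^1(D)$. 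The last boundary condition is then automatic: $d_{\pa D}\omega_{0t}=(d\omega_0)_t=\beta_t=0$.

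For the norm bound I would invoke the $k$-form estimate~\eqref{eqn7.8.007}. With $d\omega_0=\star_3\eta_0$, $d^*\omega_0=0$ and $i_{\bn}\omega_0=0$ it yields $\|\omega_0\|^2_{W^{1,2}(D)}\le C[\|\eta_0\|^2_{L^2(D)}+\|\omega_0\|^2_{L^2(D)}]$, and it only remains to absorb the lower-order term. This I would do by the standard compactness argument: if the clean estimate failed there would be a sequence with $\|\omega_0^{(n)}\|_{W^{1,2}}=1$ and $\|\eta_0^{(n)}\|_{L^2}\to0$; by the compact embedding $W^{1,2}(D)\hookrightarrow L^2(D)$ and \eqref{eqn7.8.007} applied to differences, a subsequence converges in $W^{1,2}$ to a nonzero $\omega_\infty$ with $d\omega_\infty=d^*\omega_\infty=0$, $i_{\bn}\omega_\infty=0$ and $\omega_\infty\perp\cH^1(D)$; but such a form lies in $\cH^1(D)$, hence is $0$, a contradiction. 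Since in the application $\eta_0=\bEta_0$, this is exactly the asserted bound $\|\omega_0\|_{W^{1,2}(D)}\le C\|\bEta_0\|_{L^2(D)}$.

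The step I expect to be the main obstacle is the middle construction: producing a single $1$-form that simultaneously satisfies the Coulomb gauge $d^*\omega_0=0$ and the normal condition $i_{\bn}\omega_0=0$ while keeping $d\omega_0=\beta$ fixed, and confirming that the only solvability requirement is the orthogonality already assumed. The delicate point is that the potential furnished by relative Hodge theory carries the Dirichlet condition $\tilde\omega_t=0$, not the Neumann condition we need, so one must verify that the scalar correction $dh$ repairs the boundary condition and the gauge at once, that its Neumann problem is compatible, and that each correction step genuinely preserves $d\omega_0=\beta$ without reintroducing a cohomological constraint.
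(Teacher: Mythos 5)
Your proposal is correct, and its skeleton matches the paper's: produce a primitive of $\star_3\bEta_0$ with vanishing tangential trace, repair the remaining boundary/gauge conditions by adding an exact form, and finally project off $\cH^1(D)$. The differences are in execution. First, where you express the hypothesis $\bEta_0\perp\cH^1(D)$ as the vanishing of the relative class $[\star_3\bEta_0]\in H^2_{\dR}(D,\pa D)$ via the Poincar\'e--Lefschetz pairing, the paper expresses it as orthogonality to $\star_3\cH^1(D)$, the null-space of the Dirichlet Laplacian on 2-forms; the statements are equivalent, but the paper's version feeds directly into its construction, which sets $\tomega_0=d^*R_{2t}\star_3\bEta_0$ with $R_{2t}$ the partial inverse of that Laplacian. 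In effect, the paper proves the ``relative Hodge theory'' fact you invoke as a black box. Second, because the paper's primitive is $d^*$ of something, it is automatically in Coulomb gauge, so the only defect to repair is the normal component, which is removed by a harmonic function $v_0$ with $\pa_{\bn}v_0=i_{\bn}\tomega_0$ (compatibility $\int_{\pa D}i_{\bn}\tomega_0=0$ checked by Stokes); your generic primitive need not be coclosed, so your correction must solve the inhomogeneous Neumann problem $\Delta h=d^*\tilde\omega$, $\pa_{\bn}h=-i_{\bn}\tilde\omega$, and your verification of its solvability by the divergence theorem is exactly the extra step that requires. Third, the estimate \eqref{eqn7.50.007}: the paper tracks constants constructively through $R_{2t}$ and the elliptic estimate for $v_0$, whereas you deduce it from \eqref{eqn7.8.007} by a compactness/contradiction argument. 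Your route is non-constructive, but it buys a slightly stronger statement: since any two forms satisfying \eqref{eqn7.45.0004} and orthogonal to $\cH^1(D)$ differ by an element of $\cH^1(D)$ and hence coincide, your Poincar\'e-type inequality bounds the unique solution however it is produced, independently of the construction.
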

\begin{proof}
  Observe that the conditions in~\eqref{eqn7.45.0004} along
  with~\eqref{eqn7.8.007} imply the estimate in~\eqref{eqn7.50.007}.
The 2-form $\star_3\bEta_0$ is closed and has a vanishing restriction
to $\partial D;$ it is also orthogonal to $\star_3\cH^1(D)$, which is
the null-space of the Dirichlet Laplace operator on 2-forms. If
$R_{2t}$ denotes the partial inverse of this Laplace operator taking
values in the orthogonal complement of the null-space, then we set
\begin{equation}
  \tomega_0=d^*R_{2t}\star_3\bEta_0.
\end{equation}
The properties of this operator imply that 
\begin{equation}\label{eqn7.48.0004}
 d\tomega_0=\star_3\bEta_0 \quad \text{ and } \quad \tomega_{0t}=0.
\end{equation}
There is, moreover, a constant $C$ such that
\begin{equation}\label{eqn7.49.0004}
  \|\tomega_0\|_{W^{1,2}(D)}\leq C\|\bEta_0\|_{L^2(D)}\leq C\|\bEta\|_{L^2(D)}.
\end{equation}

As $\star_3\tomega_0=d\star_3R_{2t}\star_3\bEta_0$, Stokes' theorem implies that
\begin{equation}
  \int\limits_{\partial D}i_{\bn}\tomega_0=\int\limits_{\partial
    D}\star_3\tomega_0=0.
\end{equation}
Thus we find a harmonic function $v_0$ so that
\begin{equation}
  i_{\bn}\tomega_0=\pa_{\bn}v_0.
\end{equation}
Standard estimates imply that there are constants $C$ and $C'$ so that
\begin{equation}\label{eqn7.52.0004}
  \|v_0\|_{H^2(D)}\leq C\|i_{\bn}\tomega_0\|_{H^{\frac 12}(\partial
    D)}\leq C'\|\tomega_0\|_{H^{1}(D)}.
\end{equation}
The 1-form $\omega'_0=\tomega_0-dv_0$ satisfies the conditions
in~\eqref{eqn7.45.0004}. To obtain $\omega_0$, we add an element of
$\cH^1(D)$ to $\omega'_0$ to get a solution to~\eqref{eqn7.45.0004}
which is orthogonal to $\cH^1(D)$. The fact that $d_{\partial
  D}\omega_{0t}=0$ follows from~\eqref{eqn7.45.0004} and the fact that
$i_{\bn}\eta_0=0$, or from~\eqref{eqn7.48.0004}. The estimate for
$\|\omega_0\|_{W^{1,2}}$ follows from~\eqref{eqn7.49.0004}
and~\eqref{eqn7.52.0004}.
\end{proof}

For the next step in the construction we use the following lemma:
\begin{lemma} 
For $1\leq j\leq p$, there are $\CI$ 1-forms
$\{\mu_{j0}\}$ orthogonal to $\cH^1(D)$ so that
\begin{equation}
\begin{split}
  d\mu_{j0}&=\star_3\psi_j,\\
  d^*\mu_{j0}&=0,\\
  i_{\bn}\mu_{j0}&=0.
\end{split}
\end{equation}

\end{lemma}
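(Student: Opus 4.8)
The plan is to reduce the entire construction to a single observation: the closed $2$-form $\star_3\psi_j$ is in fact \emph{exact} on $D$, after which $\mu_{j0}$ is produced by a primitive-plus-gauge-fixing argument. First I would record two facts about $\star_3\psi_j$. Since $\psi_j$ is co-closed and $\star_3$ intertwines $d$ on $2$-forms with $d^*$ on $1$-forms (up to sign), $\star_3\psi_j$ is closed. Moreover, because $i_{\bn}\psi_j=0$, the local computation in an orthonormal frame $e^1,e^2,e^3=\bn^\flat$ shows that every term of $\star_3\psi_j$ contains the factor $e^3$, so its pullback to the boundary vanishes: $(\star_3\psi_j)_t=0$. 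The class of $\star_3\psi_j$ in $H^2_{\dR}(D)$ is detected by its periods $\int_{\Gamma_i}\star_3\psi_j$ over the boundary components, which span $H_2(D;\bbR)$; each such period equals $\int_{\Gamma_i}(\star_3\psi_j)_t=0$. Hence $[\star_3\psi_j]=0$ in $H^2_{\dR}(D)$ and $\star_3\psi_j$ is exact.

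Granting exactness, I would choose a $1$-form $\mu'$ with $d\mu'=\star_3\psi_j$ and then modify it by a closed $1$-form, which leaves $d\mu'$ unchanged. Solve the Neumann problem for a scalar $f$, namely $d^*df=-d^*\mu'$ in $D$ with $\pa_{\bn}f=-i_{\bn}\mu'$ on $\pa D$; its Fredholm solvability condition is precisely the divergence theorem applied to $\mu'$, hence automatically satisfied. Setting $\mu''=\mu'+df$ yields $d\mu''=\star_3\psi_j$, together with $d^*\mu''=d^*\mu'+d^*df=0$ and $i_{\bn}\mu''=i_{\bn}\mu'+\pa_{\bn}f=0$. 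Finally, letting $\Pi_H$ denote the $L^2$-orthogonal projection onto $\cH^1(D)$, I put $\mu_{j0}=\mu''-\Pi_H\mu''$. Because every element of $\cH^1(D)$ is closed, co-closed, and satisfies $i_{\bn}=0$, subtracting $\Pi_H\mu''$ preserves all three equations while arranging $\mu_{j0}\perp\cH^1(D)$.

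Smoothness then follows from ellipticity rather than from any regularity of $\mu'$: the form $\mu_{j0}$ satisfies $d\mu_{j0}=\star_3\psi_j$ and $d^*\mu_{j0}=0$ with smooth right-hand sides and the smooth boundary condition $i_{\bn}\mu_{j0}=0$, so the estimate~\eqref{eqn7.8.007}, bootstrapped via~\eqref{eqn7.9.007}, forces $\mu_{j0}\in\CI(\overline{D})$. Consequently it suffices to obtain $\mu'$, and hence $\mu_{j0}$, merely in $W^{1,2}(D)$ at the outset.

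The main obstacle, and the reason this does not follow verbatim from the preceding lemma, is that $\star_3\psi_j$ lies in the null-space $\star_3\cH^1(D)$ of the Dirichlet Laplacian on $2$-forms, so the operator $R_{2t}$ used there is unavailable and must be replaced by the exactness argument above. I would therefore obtain the initial primitive either by invoking that a smooth exact form admits a smooth primitive on $\overline{D}$, or by extracting a $W^{1,2}$ primitive from the Hodge decomposition and relying on the final elliptic-regularity step for smoothness. The two points to verify carefully are the sign conventions in the Neumann solve (so that the compatibility condition genuinely reduces to the divergence theorem) and the standard claim that the boundary components $\Gamma_i$ generate $H_2(D;\bbR)$.
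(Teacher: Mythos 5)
Your proposal is correct, but it takes a genuinely different route from the paper's. The paper never argues exactness topologically: it constructs a primitive analytically, first killing the normal component of $\star_3\psi_j$ with an explicit cutoff form $\chi_j=r\varphi(r)\balpha_j$ built from a defining function, then applying $d^*R_{2n}$, where $R_{2n}$ is the inverse of the Neumann Laplacian on $2$-forms (invertible because $H^2_{\dR}(D)=0$), then repairing the divergence introduced by $\chi_j$ with a separate scalar Neumann solve, and finally adding an element of $\cH^1(D)$ to achieve orthogonality. You instead prove $[\star_3\psi_j]=0$ in $H^2_{\dR}(D)$ by a period computation (using that $i_{\bn}\psi_j=0$ forces $(\star_3\psi_j)_t=0$, and that the boundary components span $H_2(D;\bbR)$, a standard consequence of Alexander duality for domains in $\bbR^3$), take any primitive, and then fix \emph{both} $d^*$ and $i_{\bn}$ at once with a single scalar Neumann solve whose compatibility condition is exactly the divergence theorem; orthogonality and smoothness then follow by projection off $\cH^1(D)$ and the elliptic estimates~\eqref{eqn7.8.007}--\eqref{eqn7.9.007}. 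Your route is more elementary---it uses no Hodge theory for $2$-forms, only the scalar Neumann problem and the de Rham theorem---and it is also more robust: it works verbatim when $\partial D$ is disconnected, in which case $H^2_{\dR}(D)\neq 0$ and the paper's appeal to invertibility of the Neumann $2$-form Laplacian would need to be patched. What the paper's construction buys is that every object is manifestly smooth at each stage, so no final regularity bootstrap is required, in keeping with its explicitly constructive, integral-equation orientation. Finally, your diagnosis of why the preceding lemma cannot simply be reused---$\star_3\psi_j$ lies in (indeed spans) the null-space of the Dirichlet $2$-form Laplacian, so $R_{2t}$ is unavailable---is exactly the point, and is precisely why the paper switches from $R_{2t}$ to $R_{2n}$ in this lemma.
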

\begin{proof}
Let $r$ be a defining function for $\partial D$; this is a function
defined in a neighborhood, $U$, of $\partial D$ so that $D\cap
U=\{r<0\}$. The defining function $r$ is normalized so that
$i_{\bn}dr=1$ along $\partial D$. We also choose $\epsilon>0$, and a
function $\varphi\in\CI_c((-\epsilon,\epsilon))$ which satisfies
$\varphi(0)=1$.

Let $\balpha'_j=i_{\bn}\star_3\psi_j;$ this is a smooth 1-form on
$\partial D$. Let $\balpha_j$ be a smooth extension of $\balpha'_j$ to
a neighborhood of $\partial D$.  If $\epsilon>0$ is chosen small
enough, then
  \begin{equation}
    \chi_j=r\varphi(r)\balpha_j
  \end{equation}
is a smooth 1-form in $D$ that vanishes on $\partial D$ and satisfies
\begin{equation}
  i_{\bn}d\chi_j=\balpha'_j.
\end{equation}
Thus $\star_3\psi_j-d\chi_j$ is a smooth 2-form with vanishing normal
component. As $H^2_{\dR}(D)=0$, the Laplacian on 2-forms satisfying the Neumann
boundary condition $i_{\bn}\theta=0$ is invertible. Let $R_{2n}$ denote its
inverse and set
\begin{equation}
  \tmu_j=d^*R_{2n}[\star_3\psi_j-d\chi_j].
\end{equation}
This gives a smooth solution to the equation $d(\tmu_j+\chi_j)=\star_3\psi_j;$
it is also  clear that $i_{\bn}\tmu_j=0$ and $d^*\tmu_j=0$.

We need to modify $\chi_j$ to make it divergence-free; this requires a smooth function
$u_j$ that vanishes on $\partial D$ and satisfies
\begin{equation}
  d^*du_j=d^*\chi_j\quad \text{ and } \quad \pa_{\bn}u_j=0.
\end{equation}
The range of the Neumann Laplacian on scalar functions consists of functions of mean
zero. We observe that
\begin{equation}
  \int\limits_{D}\star_3d^*\chi_j=\int\limits_{D}d\star\chi_j=0.
\end{equation}
The last equality follows from Stokes' theorem and the fact that $\chi_j$
vanishes on $\partial D$.
 Thus $\mu'_{j}=\tmu_j+\chi_j-du_j$ is a divergence-free solution to 
\begin{equation}
  d\mu'_{j}=\star_3\psi_j\quad \text{ with } \quad i_{\bn}\mu'_{j}=0.
\end{equation}
As $[\star_3\psi_j]_t=0$ we see that $d_{\partial D}\mu'_{jt}=0$. We are free to add an
element $\varphi_j$ of $\cH^1(D)$ to $\mu'_{j}$ so that
$\mu_{j0}=\mu'_{j}+\varphi_j$ is orthogonal to $\cH^1(D)$. 
\end{proof}

We have found a divergence-free solution $Q_0\eta$,
\begin{equation}
  Q_0\eta=\omega_0+\sum_{j=q+1}^p\alpha_j\mu_{j0},
\end{equation}
to the equation $\star_3dQ_0\eta=\eta$ which is orthogonal to
$\cH^1(D)$ with $i_{\bn}Q_0\eta=0$. This easily implies that $Q_0\eta$
is orthogonal to the range of $d$, and therefore to
$\Ker(\cB,\cD(\lambda))$. As before the fact that $i_{\bn}\eta=0$
shows that $d_{\partial D}[Q_0\eta]_t=0$. All that remains is to
modify the solution so that the cohomology class of the boundary data
lies in $\lambda$.

Our final solution takes the form
\begin{equation}
  Q\eta=Q_0\eta+\sum_{k=q+1}^p\cM_{k}(\lambda)\psi_k,
\end{equation}
where the coefficients $\{\cM_{k}(\lambda)\}$ are chosen so that
$[(Q\eta)_t]\in\lambda$. We can choose independent linear functionals
$\{\ell_1,\dots,\ell_p\}$ on $H^1_{\dR}(\partial D)$ so that
\begin{equation}
  \lambda=\bigcap\limits_{i=1}^p\Ker \ell_j,
\end{equation}
and $\lambda_D\subset \Ker \ell_i$ for $i=1,\dots,q$. Since
$\{[\psi_{1t}],\dots,[\psi_{qt}]\}$ is a basis for $\lambda\cap\lambda_D$, and
both $\lambda$ and $\lambda_D$ are Lagrangian subspaces, it is clear that 
for $1\leq i\leq q$, the functionals
\begin{equation}
  \ell_i(\omega)=\int\limits_{\partial D}\omega\wedge \psi_{i}
\end{equation}
span $\lambda^{\bot}\cap\lambda_D^{\bot}$. Moreover, the matrix
\begin{equation}
  \big[\ell_i([\psi_{kt}])\big]_{q+1\leq i,k\leq p}
\end{equation}
has rank $p-q$.

For $1\leq i\leq q$, Stokes' theorem and the fact that
$\star_3dQ_0\eta=\eta$ show that
\begin{equation}
\begin{split}
  \ell_i((Q_0\eta)_t)=&\int\limits_{\partial D}Q_0\eta\wedge \psi_{i}\\
=&\int\limits_{D}\star_3\eta\wedge \psi_{i}\\
=&0.
\end{split}
\end{equation}
In order for $[(Q\eta)_t]\in\lambda$, we require coefficients
$\{\cM_{k}(\lambda)\}$ so that 
\begin{equation}
  \ell_i([(Q_0\eta)_t])=-\sum_{n=q+1}^p\cM_{k}(\lambda)\ell_i(\psi_k)\quad
  \text{ for } q+1 \leq i\leq p.
\end{equation}
As the matrix $[\ell_i(\psi_k)]_{q+1\leq i,k\leq p}$ has maximal rank, this
system of equations is uniquely solvable. 

Using these results and Lemma~\ref{lem7.5} we complete the proof of
Theorem~\ref{thm7.2.001}.  We have verified that $(\cB,\cD(\lambda))$
has an infinite dimensional null-space. Since this operator is
symmetric and densely defined, in order to prove its
self-adjointness it follows from Theorem 13.11(d) in~\cite{RudinFA}
that it suffices to prove that for some real $\theta$ the operator
$$(\cB-\theta\Id):\cD(\lambda)\to \cE_1(D)$$ 
is invertible. Given our construction of $Q$, Lemma~\ref{lem7.5} shows that
this holds for all sufficiently small, non-zero complex numbers $\theta$.

The fact that $\cB\restrictedto_{[\Ker(\cB,\cD(\lambda))]^{\bot}}$ has a
compact resolvent follows immediately from the fact that 
\begin{equation}
  Q:[\Ker(\cB,\cD(\lambda))]^{\bot}\to W^{1,2}(D;\Lambda^1).
\end{equation} 
In the course of constructing $Q$ we showed that $i_{\bn}\bEta=0$, for
$\bEta\in\cD(\lambda)\cap [\Ker(\bB,\cD(\lambda)]^{\bot}$, so this certainly
holds for the eigenforms $\{\bxi_j(\lambda)\}$. The fact that these forms are
smooth on $\overline{D}$ now follows from a standard bootstrap argument
using~\eqref{eqn7.9.007}, and that facts that, for eigenfunctions orthogonal to
the null-space we have:
\begin{equation}
\Delta\bxi=\cB^2\bxi,\, d\bxi=\mu\star_3\bxi\quad \text{ and } \quad
i_{\bn}\bxi=0.
\end{equation}
\end{proof}

\begin{remark} The partial inverse $Q_{\lambda}$ has a very simple form:
  \begin{equation}
    Q_{\lambda}=\left(\Id+\sum_{j,k=q+1}^p\cN_{jk}(\lambda)\psi_j(x)\otimes
      \ell_{k}\circ\rho_t\right)\circ Q_0,
  \end{equation}
where $\rho_t(\eta)=\eta_t$. Here $\{\ell_1,\dots,\ell_p\}$ are the linear
functionals defining $\lambda$. Note that if $\lambda_1$ and $\lambda_2$ are
two Lagrangian subspaces with trivial  intersection with $\lambda_D$, then the
difference of the partial inverses is the finite rank operator:
\begin{equation}\label{eqn7.74.007}
  Q_{\lambda_1}-Q_{\lambda_2}
  =\sum_{j,k=1}^p\left[\cN_{jk}(\lambda_1)\psi_j(x)\otimes
    \ell_{1k}-\cN_{jk}(\lambda_2)\psi_j(x)\otimes
    \ell_{2k}\right]\circ\rho_t\circ Q_0.
\end{equation}
It is also notable that, in this case, the Hilbert space
$H_0=[\Ker(\cB,\cD(\lambda))]^{\bot}$ does not depend on $\lambda$. We
let $\cB^0_{\lambda}$ denote the restriction of $(\cB,\cD(\lambda))$
to $H_0;$ of course $Q_{\lambda}$ is its inverse. The resolvents
satisfy the identity
\begin{equation}
  \mu (\cB^0_{\lambda}-\mu)^{-1}=Q_{\lambda}\left(\frac{\Id}{\mu}-
  Q_{\lambda}\right)^{-1}.
\end{equation}
\end{remark}

\subsection{Finding Force-Free Beltrami Eigenfields}\label{sec7.2}
In the previous section we demonstrated that for each $\lambda\in\Lambda_H^1(D)$
there is a countable sequence $\{k_j(\lambda)\}$ and 1-forms
$\{\bxi_j(\lambda)\}$ that satisfy the conditions
\begin{equation}
\begin{aligned}
  d\bxi_j(\lambda)&=k_j(\lambda)\star_3 \bxi_j(\lambda), & \quad
  d^*\bxi_j(\lambda)&=0, \\ d_{\partial D}[\bxi_j(\lambda)]_t&=0, &
  \quad [\bxi_j(\lambda)_t]&\in\lambda.
\end{aligned}
\end{equation}
As noted, the equations imply that $i_{\bn}\bxi_j(\lambda)=0$ as well. The
question then arises how to find these 1-forms. The Debye source representation
provides a complete solution to this problem, and reduces it to a question of
finding frequencies for which a system of Fredholm equations of second kind on
$\pa D$ has a non-trivial null-space and then finding the null-vectors.

In Section~\ref{sec6.0}, we showed that solutions of the \THME[$k$] that also
satisfy $\star_3\bxi=i\bEta$ are specified by Debye source data satisfying the
conditions
\begin{equation}
\begin{split}
  q&=-ir, \\ 
  \star_2\bj_{H0}&=-i\bj_{H0},\\
  \star_2\bj_{Hl}&=i\bj_{Hl},
\end{split}
\end{equation}
for $l=1,\dots,d$.
We let $\cN_{\bxi}(r,\bj_H)$ denote the Debye source
operator for the normal component of the electric field acting on data
satisfying these relations.  For each $\lambda\in\Lambda_H^1(\partial D)$ there
is a collection of 1-cycles
$$\{C_j(\lambda):\: j=1,\dots,p\}\subset H_1(\partial D;\bbR)$$ 
such that a closed form $[\balpha]\in\lambda$ if and only if
\begin{equation}
  \int\limits_{C_j(\lambda)}\balpha=0.
\end{equation}
This follows from the fact that $[H^1_{\dR}(\partial D)]'\simeq
H_1(\partial D;\bbR)$.

We recall that the Debye source representation is injective. A
frequency $k_0$ is therefore a non-zero eigenvalue of
$(\bB,\cD(\lambda))$ if and only $k_0$ is a frequency for which the
system of equations
\begin{equation}\label{eqn7.72.003}
  \begin{split}
    \cN_{\bxi}(r,\bj_H)&=f,\\
\int\limits_{C_j(\lambda)}\cT_{\bxi}(r,\bj_H)&=a_j, \qquad j=1,\ldots,p,
  \end{split}
\end{equation}
has a non-trivial null-space.  Recall that $f$ must
have mean zero on every component of $\partial D$. The dimension of
this null-space is exactly equal to the multiplicity of $k_0$ as an
eigenvalue of $(\bB,\cD(\lambda))$.

As noted above, in the case of $\lambda=\lambda_D$, the cycles
$\{C_j(\lambda_D)\}$ can be taken to be the cycles $\{A_j\}$ that bound chains
$\{S_j\}\subset D$. Hence, to find the classical  constant-$k$, zero flux,
force-free Beltrami fields in a domain $D$, we need to find the frequencies
$\{k_l\}$ for which~\eqref{eqn7.72.003} with $\{C_j(\lambda)\}=\{A_j\}$
has a non-trivial null-space. If $\bxi$ is such a field, then
\begin{equation}
d\bxi=k\star_3\bxi, \qquad \bxi_ndA_{\partial D}=d\bxi_t=0,
\end{equation}
and
\begin{equation}
  \int\limits_{A_j}\bxi=k\int\limits_{S_j}\star_3\bxi = 0.
\end{equation}
Thus the eigenfields of $(\cB,\cD(\lambda_D))$ are constant-$k$, force-free
Beltrami fields with vanishing flux.

Theorems~\ref{prop3.5.01} and~\ref{thm7.2.001}, and the argument above have
an interesting corollary. We let $\sigma(\cB,\lambda)$ denote the non-zero
spectrum of the operator $(\cB,\cD(\lambda))$, and define the set
\begin{equation}
  E_{\cB}=\bigcap\limits_{\lambda\in \Lambda_H^1(\partial D)}\sigma(\cB,\lambda).
\end{equation}
\begin{corollary}\label{cor7.7.003} 
For $k\in \mathbb C^+\setminus E_{\cB}$ the space of solutions to the
Beltrami equation
  \begin{equation}\label{eqn7.76.003}
    \cB\bxi=k\bxi\quad \text{ with }\quad i_{\bn}\bxi=0
  \end{equation}
has dimension $p$, where $p$ is the total genus of $\partial D$.
\end{corollary}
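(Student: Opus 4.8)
The plan is to realize the solution space of~\eqref{eqn7.76.003} as the full space $V_k$ of force-free Beltrami fields at frequency $k$, to map it into $H^1_{\dR}(\partial D)$ via its tangential boundary data, and to trap its dimension between the lower bound $p$ coming from Section~\ref{sec6.0} and an upper bound $p$ coming from a Lagrangian (isotropy) estimate. Fix $k\in\bbC^+\setminus E_{\cB}$ and first treat $k\neq 0$. Let $V_k=\{\bxi:\cB\bxi=k\bxi,\ i_{\bn}\bxi=0\}$. Applying $d$ to $\star_3 d\bxi=k\bxi$ gives $d\star_3\bxi=0$, hence $d^*\bxi=0$, so these forms lie in $\cE_1(D)$; the elliptic bootstrap using~\eqref{eqn7.9.007} and Theorem~\ref{thm7.2.001} shows they are smooth up to $\partial D$. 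Since $i_{\bn}\bxi=0$ forces the restriction of the $2$-form $\star_3\bxi$ to $\partial D$ to vanish, the equation yields $d_{\partial D}\bxi_t=(d\bxi)_t=k(\star_3\bxi)_t=0$, so $\bxi_t$ is closed and $\Phi(\bxi):=[\bxi_t]$ defines a linear map $\Phi:V_k\to H^1_{\dR}(\partial D)$ into a $2p$-dimensional symplectic vector space. Identifying $V_k$ with $\cN_k^{1,0}(D)$ through $\bxi\mapsto(\bxi,-i\star_3\bxi)$, the analysis of Section~\ref{sec6.0} supplies the lower bound $\dim V_k\geq p$ for every $k\in\bbC^+$.

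Next I would show that $\Phi(V_k)$ is isotropic. For $\bxi_1,\bxi_2\in V_k$, the integration-by-parts identity for $\cB$ from Section~\ref{sec7.1}, namely $\langle\cB\bxi_1,\bxi_2\rangle=\langle\bxi_1,\cB\bxi_2\rangle-\int_{\partial D}\bxi_1\wedge\bxi_2$ with the symmetric bilinear pairing $\langle\bxi_1,\bxi_2\rangle=\int_D\bxi_1\wedge\star_3\bxi_2$, together with $\cB\bxi_i=k\bxi_i$, gives $\int_{\partial D}\bxi_1\wedge\bxi_2=0$ (the two interior terms cancel, even for complex $k$, since the pairing carries no conjugation). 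As the pullback of $\bxi_1\wedge\bxi_2$ to $\partial D$ is $\bxi_{1t}\wedge\bxi_{2t}$ and both $\bxi_{it}$ are closed, this is exactly the symplectic pairing of $[\bxi_{1t}]$ and $[\bxi_{2t}]$. Hence $\Phi(V_k)$ is an isotropic subspace of $H^1_{\dR}(\partial D)$, so $\dim\Phi(V_k)\leq p$.

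The final step is to use $k\notin E_{\cB}$ to kill the kernel of $\Phi$. The argument preceding the corollary identifies, for $k\neq 0$, the nonzero $k$-eigenspace of $(\cB,\cD(\lambda))$ with $\{\bxi\in V_k:[\bxi_t]\in\lambda\}=V_k\cap\Phi^{-1}(\lambda)$: one inclusion is Theorem~\ref{thm7.2.001} (an eigenform with nonzero eigenvalue has $i_{\bn}\bxi=0$), and the converse holds because any $\bxi\in V_k$ with $[\bxi_t]\in\lambda$ is divergence-free, has closed tangential part, and therefore lies in $\cD(\lambda)$. Thus $k\in\sigma(\cB,\lambda)$ if and only if $V_k\cap\Phi^{-1}(\lambda)\neq 0$. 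Because $k\notin E_{\cB}$, there is a Lagrangian $\lambda_0\in\Lambda_H^1(\partial D)$ with $k\notin\sigma(\cB,\lambda_0)$, i.e. $V_k\cap\Phi^{-1}(\lambda_0)=0$. Since $\Ker\Phi=\Phi^{-1}(0)\subseteq\Phi^{-1}(\lambda_0)$, this forces $\Ker\Phi=0$, so $\dim V_k=\dim\Phi(V_k)\leq p$. Combined with $\dim V_k\geq p$, we conclude $\dim V_k=p$.

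The main obstacle is the bookkeeping in this last step: the hypothesis supplies only a single Lagrangian $\lambda_0$ for which $k$ is not an eigenvalue, and one must leverage this into a bound on the entire space $V_k$. The isotropy estimate together with the inclusion $\Ker\Phi\subseteq\Phi^{-1}(\lambda_0)$ is exactly what makes this possible, so care is needed in justifying the eigenspace identification (hence the regularity and divergence-free claims) and the cohomological interpretation of the boundary pairing, which rests on the distributional Hodge framework of Section~\ref{sec6}. Finally, $k=0$ is excluded from every $\sigma(\cB,\lambda)$ by definition, so $0\in\bbC^+\setminus E_{\cB}$; there~\eqref{eqn7.76.003} must be read with the divergence condition $d^*\bxi=0$ built in (as $\cB$ acts on $\cE_1(D)$), and the solution space is then precisely $\cH^1(D)$, of dimension $p$, consistent with the stated result.
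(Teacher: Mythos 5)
Your argument is correct, but it reaches the conclusion by a genuinely different route than the paper. The paper's proof is a one-step application of its integral-equation machinery: choosing $\lambda_0\in\Lambda_H^1(\partial D)$ with $k\notin\sigma(\cB,\lambda_0)$, the augmented system~\eqref{eqn7.72.003} is an index-zero Fredholm system whose null-space corresponds exactly to the $k$-eigenspace of $(\cB,\cD(\lambda_0))$; it is therefore invertible, and Theorem~\ref{prop3.5.01} (surjectivity of the Debye representation) then parametrizes the solutions of~\eqref{eqn7.76.003} bijectively by the $p$ cycle values $(a_1,\dots,a_p)\in\bbC^p$, producing existence and uniqueness, hence dimension exactly $p$, in one stroke. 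You instead trap the dimension between two bounds: the lower bound $\dim V_k\geq p$ is imported from Section~\ref{sec6.0}, and the upper bound comes from a symplectic observation that is absent from the paper's proof of this corollary: since the pairing in the identity $\langle\cB\bxi_1,\bxi_2\rangle=\langle\bxi_1,\cB\bxi_2\rangle-\int_{\partial D}\bxi_1\wedge\bxi_2$ is bilinear (no conjugation), any two solutions satisfy $\int_{\partial D}\bxi_{1t}\wedge\bxi_{2t}=0$ even for complex $k$, so the image of the trace map $\Phi:\bxi\mapsto[\bxi_t]$ is isotropic in $H^1_{\dR}(\partial D)$ and has rank at most $p$, while $k\notin\sigma(\cB,\lambda_0)$ kills $\Ker\Phi$ via the inclusion $\Ker\Phi\subseteq V_k\cap\Phi^{-1}(\lambda_0)=\Ker(\cB-k\Id,\cD(\lambda_0))=\{0\}$. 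Each route has its merits: the paper's is constructive (the solution space is exhibited as the image of explicit flux data, which is what drives the numerics), while yours isolates the structural reason the corollary holds --- the self-adjoint problems $(\cB,\cD(\lambda))$ simply slice the fixed space $V_k$ along Lagrangians, over which $V_k$ sits isotropically --- and it yields as a byproduct one direction of the characterization of $E_{\cB}$ (if $\dim V_k>p$ then some nonzero $\bxi\in V_k$ has $[\bxi_t]=0$ in $H^1_{\dR}(\partial D)$) that the paper only obtains at the end of Section~\ref{sec7.2} by analytic perturbation theory. Two details you elide are harmless but worth stating explicitly: for non-real $k$ or complex eigenforms one must pass to the complexification of the real Lagrangian $\lambda_0$, which is still isotropic of complex dimension $p$ for the bilinearly extended pairing; and the eigenspace identification requires that elements of $V_k$ lie in $\cE_1(D)$ and in $\cD(\lambda_0)$, which rests on $d^*\bxi=0$, $d_{\partial D}\bxi_t=0$, and the boundary regularity bootstrap --- all of which you do verify.
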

\begin{proof} 
As noted above, for each $\lambda\in \Lambda_H^1(\partial D)$ there is a
collection of 1-cycles on $\partial D$,
$\{C_1(\lambda),\dots,C_p(\lambda)\}$, so that a closed form $\alpha$
satisfies $[\alpha]\in\lambda$ if and only if for $j=1,\dots,p$,
  \begin{equation}
    \int\limits_{C_j(\lambda)}\alpha=0.
  \end{equation}
  The system of equations in~\eqref{eqn7.72.003} is a Fredholm system
  of index zero. Theorem~\eqref{prop3.5.01} implies that this system
  is invertible for any $k\notin \sigma(\cB,\lambda)$, and therefore
  the space of solutions to~\eqref{eqn7.76.003} is $p$ dimensional for
  such $k$. As this holds for any choice of $\lambda\in
  \Lambda_H^1(\partial D)$, the corollary follows.
\end{proof}

It seems a very interesting question whether or not $E_{\cB}=\emptyset$. If
this is so, then the $\dim \cN^{1,0}_{k}(D)=p$ for any $k\in \mathbb C^+$. If
$E_{\cB}\neq\emptyset$, then this set is a new spectral invariant of the
embedding of $\partial D$ into $\bbR^3$. As we see in the next section, it can
happen that $E_{\cB}\neq\emptyset$ if $D$ has a continuous symmetry.

As noted above, the spectral theory of the operators
$(\cB,\cD(\lambda))$ defines a map from $\Lambda_H^1(\partial D)$ to
the eigendata $\{(k_j(\lambda),\bxi_j(\lambda))\}$. The dependence of
each eigenvalue on the choice of $\lambda$ is something that is
readily investigated. For each $\lambda\in \Lambda_H^1(\partial D)$ and
$\mu\in\bbR$ we let
\begin{equation}
  d_{\lambda}(\mu)=\dim\Ker((\cB-\mu\Id,\cD(\lambda)),
\end{equation}
and set
\begin{equation}
  d_{\min}(\mu)=\min\{d_{\lambda}(\mu):\: \lambda\in \Lambda_H^1(\partial D)\}.
\end{equation}
If $d_{\min}(\mu)>0,$ then for every $\lambda\in\Lambda_H^1(\partial D)$ there
is subspace of $\cD(\lambda)$ of this dimension consisting of solutions to
\begin{equation}
  (\cB-\mu\Id)\bxi=0.
\end{equation}
Indeed the estimates on eigenfunctions that follow
from~\eqref{eqn7.9.007} show that if $<\lambda_j>$ converges to $\lambda,$ then 
\begin{equation}
  \limsup_{j\to\infty}d_{\lambda_j}(\mu)\leq d_{\lambda}(\mu).
\end{equation}
Hence the subset
\begin{equation}
  \cW_{\min}(\mu)=\{ \lambda\in\Lambda_H^1(\partial D):d_{\lambda}(\mu)=d_{\min}(\mu)\}
\end{equation}
is open. This shows, in particular that it has non-trivial
intersection with the dense open set 
\begin{equation}
  \cG=\{\lambda\in \Lambda_H^1(\partial D):\: \lambda\cap\lambda_{D}=\{0\}\}.
\end{equation}
With this observation and analytic perturbation theory we  prove the
following result:
\begin{theorem} If $d_{\min}(\mu)>0,$ then there is a subspace
  $\cS_{\mu}^{0}\subset\cE_1(D)$, with $\bxi\in \cS_{\mu}^{0}$ satisfying:
  \begin{equation}
    \begin{aligned}
     d\bxi &\in L^2(D;\Lambda^2), &\quad \cB\bxi&=\mu\bxi,\\ d_{\pa
       D}\bxi_t&=0, &\quad [\bxi_t]&=0 \quad \text{ in } H^1_{\dR}(\pa
     D).
    \end{aligned}
      \end{equation}
 $\cS_{\mu}^0\subset\cD(\lambda)$
for every $\lambda\in\Lambda_H^1(\partial D),$ and $\dim  \cS_{\mu}^{0}=d_{\min}(\mu).$  
\end{theorem}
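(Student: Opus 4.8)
The plan is to reduce the statement to a fact in symplectic linear algebra on $H^1_{\dR}(\pa D)$. First I would introduce the space $V_{\mu}$ of \emph{all} $\mu$-eigenforms of $\cB$ carrying no Lagrangian constraint: divergence-free $1$-forms $\bxi$ with $d\bxi\in L^2(D;\Lambda^2)$, $\cB\bxi=\mu\bxi$, and $d_{\pa D}\bxi_t=0$. As in the proof of Theorem~\ref{thm7.3.002}, the eigenequation forces $\bxi_n=0$, and then the estimate~\eqref{eqn7.8.007} (with $i_{\bn}\bxi=0$) shows $\bxi\in W^{1,2}(D)$, so every element of $V_{\mu}$ is in fact smooth on $\overline D$. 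Since $\bxi_t$ is closed there is a well-defined linear map $\Phi:V_{\mu}\to H^1_{\dR}(\pa D)$, $\Phi(\bxi)=[\bxi_t]$, and by construction $\Ker(\cB-\mu\Id,\cD(\lambda))=\Phi^{-1}(\lambda)$ for every $\lambda\in\Lambda_H^1(\pa D)$. I would set $\cS_{\mu}^0:=\Ker\Phi$; every such $\bxi$ satisfies the four conditions in the theorem, and because the trivial class lies in every subspace we have $\cS_{\mu}^0\subset\cD(\lambda)$ for all $\lambda$. Writing $R:=\range\Phi$, the rank–nullity relation gives $d_{\lambda}(\mu)=\dim\cS_{\mu}^0+\dim(R\cap\lambda)$, so the theorem is equivalent to the claim $\min_{\lambda}\dim(R\cap\lambda)=0$.

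The key step is to show that $R$ is an \emph{isotropic} subspace of the symplectic space $(H^1_{\dR}(\pa D),\wedge)$. For $\bxi_1,\bxi_2\in V_{\mu}$ the integration-by-parts identity recorded above, namely $\langle\cB\bxi_1,\bxi_2\rangle=\langle\bxi_1,\cB\bxi_2\rangle-\int_{\pa D}\bxi_1\wedge\bxi_2$, combined with $\cB\bxi_i=\mu\bxi_i$ and the symmetry of the real $L^2$-pairing, yields immediately $\int_{\pa D}\bxi_{1t}\wedge\bxi_{2t}=0$. Since this boundary integral depends only on the classes $[\bxi_{1t}],[\bxi_{2t}]$, it is precisely the wedge pairing of two arbitrary elements of $R$, so $R$ is isotropic; the smoothness established above (together with Lemma~\ref{lem7.0.002} for the distributional formulation) justifies both the boundary restriction and the integration by parts. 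In particular $\dim R\le p$ and $V_{\mu}$ is finite-dimensional.

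Finally I would invoke the elementary fact that any isotropic subspace $R$ of a finite-dimensional symplectic vector space admits a \emph{transverse} Lagrangian: completing a Darboux basis so that $R\subset\Span\{e_1,\dots,e_p\}$, the Lagrangian $\lambda_*=\Span\{f_1,\dots,f_p\}$ satisfies $R\cap\lambda_*=\{0\}$. For this $\lambda_*$ one gets $d_{\lambda_*}(\mu)=\dim\cS_{\mu}^0$, while $d_{\lambda}(\mu)\ge\dim\cS_{\mu}^0$ for every $\lambda$; hence $d_{\min}(\mu)=\dim\cS_{\mu}^0$, which is positive by hypothesis, giving both the dimension count and the nontriviality of $\cS_{\mu}^0$. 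I expect the main obstacle to be the second step: pinning down the regularity of the unconstrained eigenforms (so that $\bxi_n=0$, the boundary traces exist, and integration by parts is legitimate) in order to prove that $R$ is isotropic, since without isotropy a transverse Lagrangian need not exist. This route has the advantage of being purely algebraic once isotropy is known, bypassing the semicontinuity and analytic-perturbation machinery; alternatively one could run the perturbation argument on $\cW_{\min}(\mu)\cap\cG$, but establishing the isotropy of $R$ is the conceptual heart either way.
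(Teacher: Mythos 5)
Your proposal is correct, and it takes a genuinely different route from the paper. The paper's proof runs through analytic perturbation theory: it uses the upper semicontinuity of $d_{\lambda}(\mu)$ to show $\cW_{\min}(\mu)$ is open and meets the dense set $\cG$, parametrizes eigendata analytically along a curve $\lambda_s\subset\cW_{\min}(\mu)\cap\cG$, derives the first-variation formula $k_j'(0)=\int_{\partial D}\bxi_j'(0)\wedge\bxi_j(0)$, and then uses the cycle-duality formula for the wedge pairing to show that a nontrivial class $[\bxi_{jt}(0)]$ would permit a variation with $k_j'(0)\neq 0$, contradicting constancy of the eigenvalue. Your argument extracts the same essential mechanism --- the boundary term $\int_{\partial D}\bxi_1\wedge\bxi_2$ in the symmetry identity for $\cB$ --- but deploys it statically rather than infinitesimally: applied to two $\mu$-eigenforms it vanishes outright, so $R=\range\Phi$ is isotropic, and then the existence of a Lagrangian transverse to an isotropic subspace does the rest. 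This buys several things the paper's proof does not give: it avoids the analytic-families machinery entirely (no need to justify analytic parametrization of eigenvectors, which Kato-type theory supplies but which the paper treats tersely), and your rank--nullity identity $d_{\lambda}(\mu)=\dim\cS^0_{\mu}+\dim(R\cap\lambda)$ is sharper than the theorem itself, since it computes the defect for \emph{every} $\lambda$ and identifies $\cW_{\min}(\mu)$ exactly as the set of Lagrangians transverse to $R$ --- recovering, rather than assuming, the openness statement. Your diagnosis of the load-bearing step is also right: the regularity of unconstrained eigenforms (via $\bxi_n=0$ and the estimate~\eqref{eqn7.8.007}, then bootstrap with~\eqref{eqn7.9.007}) is what legitimizes the trace and the integration by parts, and this is precisely the regularity the paper itself establishes for its eigenforms. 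The only point worth making explicit is that $\mu\neq 0$ is needed both to deduce $\bxi_n=0$ from $d_{\partial D}\bxi_t=0$ and to get finite dimensionality of $\Ker\Phi$ from the compact resolvent; this restriction is implicit in the paper as well, since $d_{\min}$ is only of interest on the non-zero spectrum defining $E_{\cB}$.
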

\begin{proof}
  Suppose that $\lambda_s$ is an analytic curve contained in
  $\cW_{\min}(\mu)\cap \cG.$ In particular, there exist analytic families of cycles
  $\{C_l(s):\: l=1,\dots,p\}$ in $H_1(\partial D;\bbR)$ so that a
  closed 1-form $\balpha\in\lambda_s$ if and only if
\begin{equation}
  \int\limits_{C_l(s)}\balpha=0\quad \text{ for }l=1,\dots,p.
\end{equation}It is a simple consequence of~\eqref{eqn7.74.007}
that $(\cB,\cD(\lambda_s))$ is a one parameter analytic family of operators. In
this case the eigenvectors and eigenspaces can be parametrized
analytically. 

Let $\cS_{\mu}(s)$ denote the $\mu$-eigenspace of $(\cB,\cD(\lambda_s)).$ For
$j = 1,\ldots,d_{\min}(\mu)$, let $(k_j(s),\bxi_j(s))$ be  analytic families
satisfying
\begin{equation}
  \cB\bxi_j(s)=k_j(s)\bxi_j(s) \quad \text{and} \quad
  \int\limits_{C_l(s)}\bxi_j(s)=0,
\end{equation}
with $\{\bxi_1(0),\dots,\bxi_{d_{\min}(\mu)}(0)\}$ an orthonormal basis for $\cS_{\mu}(0).$
  Differentiating, and setting $s=0$ gives:
\begin{equation}
  \cB\bxi_j'(0)=k_j'(0)\bxi_j(0)+k_j(0)\bxi_j'(0),
\end{equation}
and
\begin{equation}\label{eqn7.80.001}
  \int\limits_{C'_l(0)}\bxi_j(0)+\int\limits_{C_l(0)}\bxi'_j(0)=0.
\end{equation}
If the $\bxi_j(s)$ are normalized to have norm $1$, then this implies
\begin{equation}
  k_j'(0)=\langle\cB\bxi_j'(0),\bxi_j(0)\rangle.
\end{equation}
Integrating by parts on the right hand side we see that
\begin{equation}\label{eqn7.98.007}
   k_j'(0)=\int_{\partial D}\bxi_j'(0)\wedge\bxi_j(0).
\end{equation}
Because the curve $\lambda_s\subset \cW_{\min}(\mu)\cap \cG,$ so that
$\dim\cS_{\mu}(s)=d_{\min}(\mu);$ it is immediate from the discreteness of the
spectra of $(\cB,\cD(\lambda_s))$ that 
\begin{equation}
  k_j'(s)\equiv 0.\text{ for }j=1,\dots,d_{\min}(\mu).
\end{equation}

It is easily established that there is a dual family of cycles $\{
B_1,\dots,B_p\}$, which, together with $\{C_j(0):\;j=1,\dots,p\}$, is a
generating set for $H_1(\partial D;\bbR)$. It is classical that we can choose $\{
B_1,\dots,B_p\}$ so that if $\balpha$ and
$\bBeta$ are closed 1-forms, then 
\begin{equation} 
  \int\limits_{\partial D}\balpha\wedge\bBeta=\sum_{l=1}^p
\left[\int\limits_{C_l(0)}\balpha\cdot \int\limits_{B_l}\bBeta-
\int\limits_{B_l}\balpha\cdot \int\limits_{C_l(0)}\bBeta\right],
\end{equation}
see~\cite{GriffithsHarris}. Using this formula, and the boundary condition satisfied by $\bxi_j(0)$,
we see that
\begin{equation}\label{eqn7.84.001}
  \int_{\partial D}\bxi_j'(0)\wedge\bxi_j(0)=
\sum_{l=1}^p \int\limits_{C_l(0)}\bxi'_j(0)\cdot \int\limits_{B_l}\bxi_j(0).
\end{equation}

If $\bxi_j(0)$ is not trivial in $H^1_{\dR}(\partial D)$, then there
is an $l_0$ such that
\begin{equation}
  \int\limits_{B_{l_0}}\bxi_j(0)\neq 0.
\end{equation}
This is because $\{C_l(0), B_l:\: l=1,\dots p\}$ is a basis for $H_1(\partial
D;\bbR)$.  We are free to choose the curve $\lambda_s$ so that
$\pa_s\lambda_s\restrictedto_{s=0}$ is any vector in
$T_{\lambda_0}\Lambda_H^1(\partial D).$ As $\{B_1,\dots,B_p\}$ is dual to
$\{C_1(0),\dots,C_p(0)\},$ it is not hard to see that we can choose our curve
$\lambda_s$ so that $C_l'(0)=0$, unless $l=l_0$, in which case
$C_{l_0}'(0)=B_{l_0}$. From the variational equations~\eqref{eqn7.80.001}
and~\eqref{eqn7.84.001} it would then follow that
\begin{equation}
  k_j'(0)=-\left[\int\limits_{B_{l_0}}\bxi_{j}(0)\right]^2\neq 0,
\end{equation}
which contradicts our earlier observation that $k_j'(0)=0.$ Hence, for
$j=1,\dots,d_{\min}(\mu),$  the 1-form $\bxi_{jt}(0)$
must be trivial in $H^1_{\dR}(\pa D).$

Any divergence-free 1-form $\bxi\in W^{1,2}(D),$ with $d\bxi_t=0$ and
$[\bxi_t]=0$ in $H^1_{\dR}(\pa D)$ automatically belongs to $\cD(\lambda)$ for
all $\lambda\in\Lambda_H^1(\partial D).$ This shows that  setting
$\cS_{\mu}^0=\cS_{\mu}(0)$ completes the proof of the theorem.
\end{proof}

From their definitions, it is clear that
\begin{equation}
  E_{\cB}=\{\mu:\: d_{\min}(\mu)>0\}.
\end{equation}
The theorem shows that $\mu\in E_{\cB}$ if and only there exists a non-zero
divergence free 1-form, $\bxi$ with $\cB\bxi=\mu\bxi$ and $d_{\pa D}\bxi_t=0,$
with $[\bxi_t]=0$ in $H^1_{\dR}(\pa D).$ In the next section we show that
$E_{\cB}\neq\emptyset$ for a solid torus of revolution.  In the last section we
identify $E_{\cB},$ for a round ball, with the Dirichlet spectrum of the scalar
Laplace operator. Whether $E_{\cB}=\emptyset$ for generic tori or higher genus
surfaces is far from clear. Formul{\ae}~\eqref{eqn7.80.001}
and~\eqref{eqn7.98.007} should prove useful in the analysis of this question.

\subsection{Beltrami Fields on the Torus}

For numerical examples of the discussion in the previous sections,
we compute exceptional frequencies $\{k_j\}$ and the corresponding force-free,
zero-flux Beltrami fields in the interior of a torus. 
Numerical results are provided for the unit ball in a subsequent section.
If $x,y,z$ are the usual Cartesian coordinates,
let $D$ be the bounded domain with genus~$1$ boundary $\partial D$
(a torus) given by
\begin{equation}
\label{eq-torus}
\begin{split}
x(\theta,\phi) &= (2+\cos\phi) \cos\theta, \\
y(\theta,\phi) &= (2+\cos\phi) \sin\theta, \\
z(\theta,\phi) &= \sin\phi,
\end{split}
\end{equation}
for $\theta,\phi \in [0,2\pi) \times [0,2\pi)$. Let us define a local
    orthonormal basis on $\partial D$, $( \hbphi, \hbtheta, \hbn)$,
    with
\begin{equation}
\begin{split}
\hbphi &= ( -\sin\phi \cos\theta , \, -\sin\phi \sin\theta, \, \cos\phi ), \\
\hbtheta &= ( -\sin\theta, \, \cos\theta, \, 0 ), \\
\hbn &= ( \cos\phi \cos\theta, \, \cos\phi \sin\theta, \, \sin\phi ),
\end{split}
\end{equation}
where $(x,y,z)$ denotes a Cartesian vector in $\bbR^3$.  It is then
straightforward to show that on $\partial D$, a basis for the
two-dimensional harmonic vector fields is
\begin{equation}
\begin{split}
\bj_{H_1}(\theta,\phi) &= \left( \frac{-\sin\theta}{2+\cos\phi}, \,
\frac{\cos\theta}{2+\cos\phi}, \, 0 \right), \\
\bj_{H_2}(\theta,\phi) &= \left( \frac{-\sin\phi\cos\theta}{2+\cos\phi}, \,
\frac{-\sin\phi\sin\theta}{2+\cos\phi}, \, \frac{\cos\phi}{2+\cos\phi} \right).
\end{split}
\end{equation}
In fact, $\bj_{H_2} = \star_2\bj_{H_1}$ ($= \hbn \times \bj_{H_1}$ in 
vector notation).
Therefore, the linear combination which satisfies the requirement
\begin{equation}
\star_2\bj_H= -i\bj_H
\end{equation}
is given by
\begin{equation}
\bj_H = \bj_{H_1} - i \bj_{H_2}.
\end{equation}
In order to avoid redundancy, we omit a discussion of the
discretization of the boundary $\partial D$ and the following integral
operators; we only point out that the surface unknowns (and
consequently the Beltrami fields) are represented by their Fourier
series in $\theta$, and the resulting 2D boundary integral equations
are discretized in $\phi$ using a $50$-point, $16^{\text{th}}$-order
hybrid Gaussian-trapezoidal rule due to Alpert
\cite{alpert,martinsson}. See Section~6 of \cite{EpGr2} for more
details, as well as a forthcoming paper.  To summarize, we wish to
find frequencies and fields, $k$ and $\bxi$, respectively, which are
non-trivial solutions to
\begin{equation}\label{eq-linsys}
\begin{aligned}
\cB\bxi &= k \, \bxi &\quad &\text{in } D, \\
  \int\limits_{A} \bxi &= 0, &\quad & \\
i_n \bxi &= 0 &\quad &\text{on } \partial D,
\end{aligned}
\end{equation}
where the cycle $A$ is given by equation~(\ref{eq-torus}) with 
$\phi \in [0,2\pi)$ and $\theta=0$.

In fact, it is easy to see that the second condition 
in~(\ref{eq-linsys}) is only non-trivial for
solutions that are purely axially-symmetric. Otherwise, if
$\bxi$ is a closed 1-form on the boundary with
\begin{equation}\label{eq-rep}
\bxi = \left( a(\phi) d\theta + b(\phi) d\phi \right) e^{im\theta},
\end{equation}
where $m \neq 0$, then $im b(\phi)=a'(\phi)$ and therefore
\begin{equation}
\bxi=\frac{1}{im}d(a(\phi)e^{im\theta}).
\end{equation}

To this end, suppressing the separation of variables in the $\theta$
variable, we represent the unknown field $\bxi$ as
\begin{equation}
\bxi = i k \btheta - d^*\Psi + d\balpha,
\end{equation}
where $\btheta$, $\Psi$, and $\balpha$ are all assumed to depend
on the unknown surface charge $q$ using relations (\ref{srfint2}), 
(\ref{eqn3.28}), (\ref{eqn3.28a}), and (\ref{eqn3.29}). Note that this
representation is analogous to that of the magnetic field used earlier
in~(\ref{eqn29}).
Using this representation, the resulting linear system for the unknown 
$q$ is given by
\begin{gather}\label{eq-b}
\frac{q}{2} + i k \, K_{2,n}[\bm] - K_0[q] + K_3[\bj] = 0, \notag \\
\int\limits_{A} \bxi[q] = 0.
\end{gather}
Let the discretization of the above linear system be denoted as
\begin{equation}
\mathcal A \, q = 0.
\end{equation}
When $k$ is a resonant frequency, the matrix $\mathcal A$ is
singular.  When $k$ is \emph{near} a resonant frequency, $\mathcal
A$ is numerically ill-conditioned. If $k$ is allowed to be complex
(even though we showed earlier that all Beltrami resonances are
real), then one can use  Muller's method for finding  roots, 
see~\cite{muller}, to find a numerical zero of the function
\begin{equation}
f(k) = \frac{1}{r_1 \mathcal A^{-1} r_2},
\end{equation}
where $r_1$ and $r_2$ are \emph{fixed} random vectors with unit $\ell_2$ norm.
The matrix $\mathcal A^{-1}$ can be computed via Gaussian
elimination.  As $\mathcal A$ becomes increasingly ill-conditioned,
the proxy function $f$ approaches zero. Using this approach, the first
several non-zero Beltrami resonances in Fourier modes $0$, $1$, and
$2$ are calculated and given in Table~\ref{tab-ks}. As a
function of the wave-number $k$, the condition number of the matrix
$\mathcal A$,
\[
\kappa = \frac{\sigma_{\text{max}}}{\sigma_{\text{min}}},
\]
where $\sigma_{\text{max}}$ is the largest singular value and
$\sigma_{\text{min}}$ is the smallest singular value, is plotted in
Figure~\ref{fig-ks}. Spikes in the condition number correspond to
values of $k$ for which the linear system is singular,
i.e. Beltrami resonances.

In order to compute a Beltrami field in the volume once a resonant $k$
has been found, the null-vector, $q_0$, of $\mathcal A$ must be
calculated.  This can be done via Gaussian elimination or the
singular-value decomposition.  Once $q_0$ is computed, the Beltrami
field may be evaluated in the volume $D$, off of $\partial D$, via
smooth layer potential integrals using
representation~(\ref{eq-rep}). Using this method for computing the
null-vector, Figure~\ref{fig-volume} shows a Beltrami vector field in
the volume corresponding to the lowest resonance in the axisymmetric
$m=0$ mode. Figure~\ref{fig-slice} shows two-dimensional projections
in the $xz$-plane of the Beltrami fields corresponding to the lowest
resonance in the $m=0$, $m=1$, and $m=2$ modes, respectively. The
component of the vector field in the $\hbtheta$ direction can be
ignored since it necessarily has zero contribution normal to $\partial
D$.

\begin{table}[b]
\centering
\begin{tabular}{c|ccc}
& Resonance $k$ & $|f(k)|$ & $\text{dim} \, \text{Null}(\mathcal A)$\\
\hline
       & $3.6507029 + i \, 1.9\text{E-12}$ & $1.3\text{E-14}$ & $1$ \\
Mode 0 & $3.8577285 - i \, 1.3\text{E-12}$ & $7.5\text{E-15}$ & $1$ \\
       & $4.0272565 + i \, 2.3\text{E-14}$ & $4.3\text{E-14}$ & $1$ \\
\hline
       & $3.3392108 + i \, 3.2\text{E-13}$ & $2.8\text{E-14}$ & $1$ \\
Mode 1 & $3.8476760 + i \, 2.1\text{E-12}$ & $3.5\text{E-15}$ & $1$ \\
       & $4.3727343 + i \, 1.8\text{E-14}$ & $3.4\text{E-14}$ & $1$ \\
\hline
       & $3.1108713 - i \, 1.1\text{E-13}$ & $2.1\text{E-14}$ & $1$ \\
Mode 2 & $3.9513064 + i \, 1.7\text{E-12}$ & $4.6\text{E-15}$ & $1$ \\
       & $4.7182683 + i \, 2.6\text{E-12}$ & $1.5\text{E-14}$ & $1$ \\
\hline
\end{tabular}
\caption{The first several non-zero Beltrami resonances in the first
  few Fourier modes for the linear system (\ref{eq-b}) on the torus
  (\ref{eq-torus}).  Each value of $k$ was computed using Muller's
  rooting finding algorithm. As this boundary value problem is
  self-adjoint it has only real resonances. The imaginary parts in the
  table, which are $\mathcal O(10^{-14}),$ are zero to within the accuracy of
  this computation.  Higher resonances may or may not correspond to
  one-dimensional eigenspaces.}
\label{tab-ks}
\end{table}

\begin{figure}[p]
\centering
  \subfigure[Mode 0.]
  {\includegraphics[width=.45\linewidth]{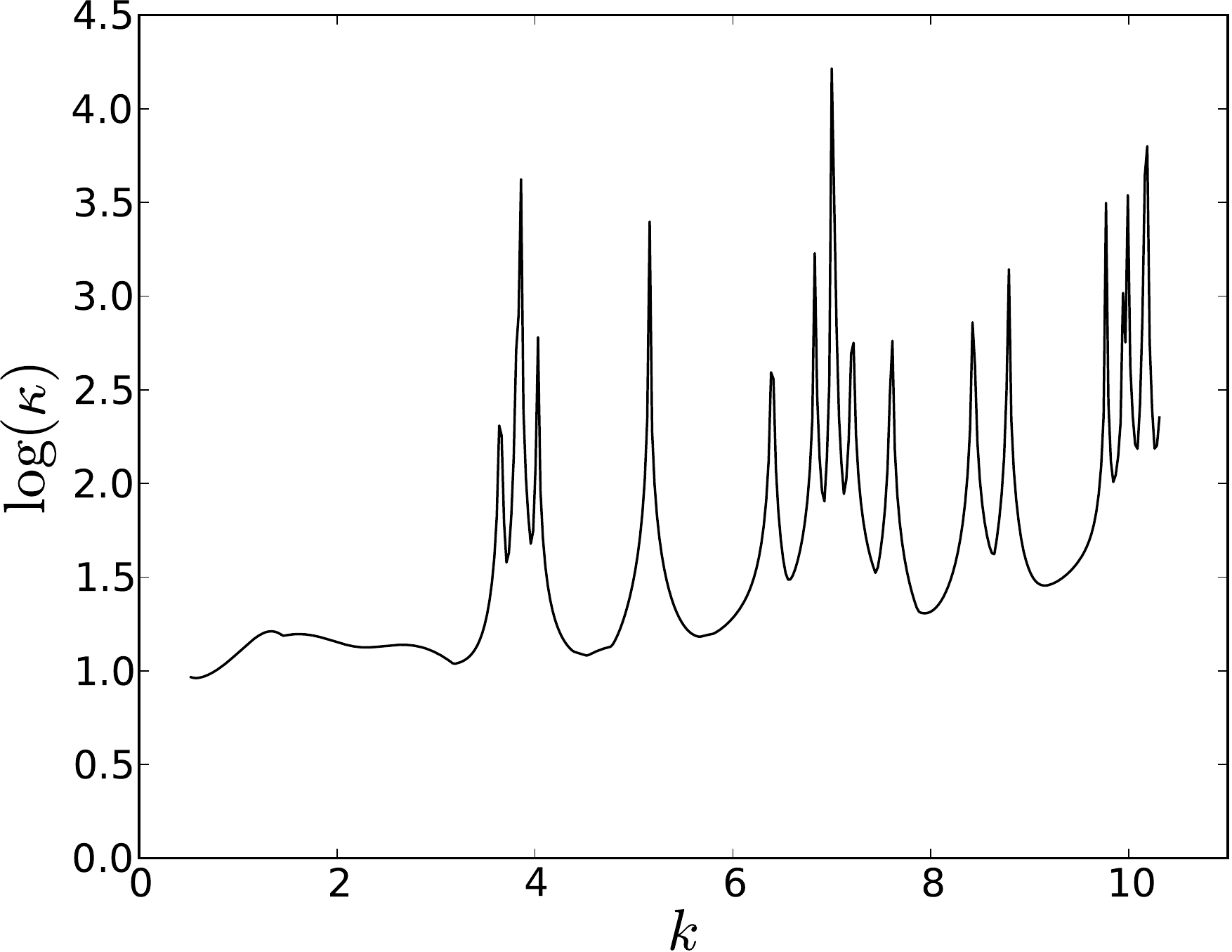}} \\
  \subfigure[Mode 1.]
  {\includegraphics[width=.45\linewidth]{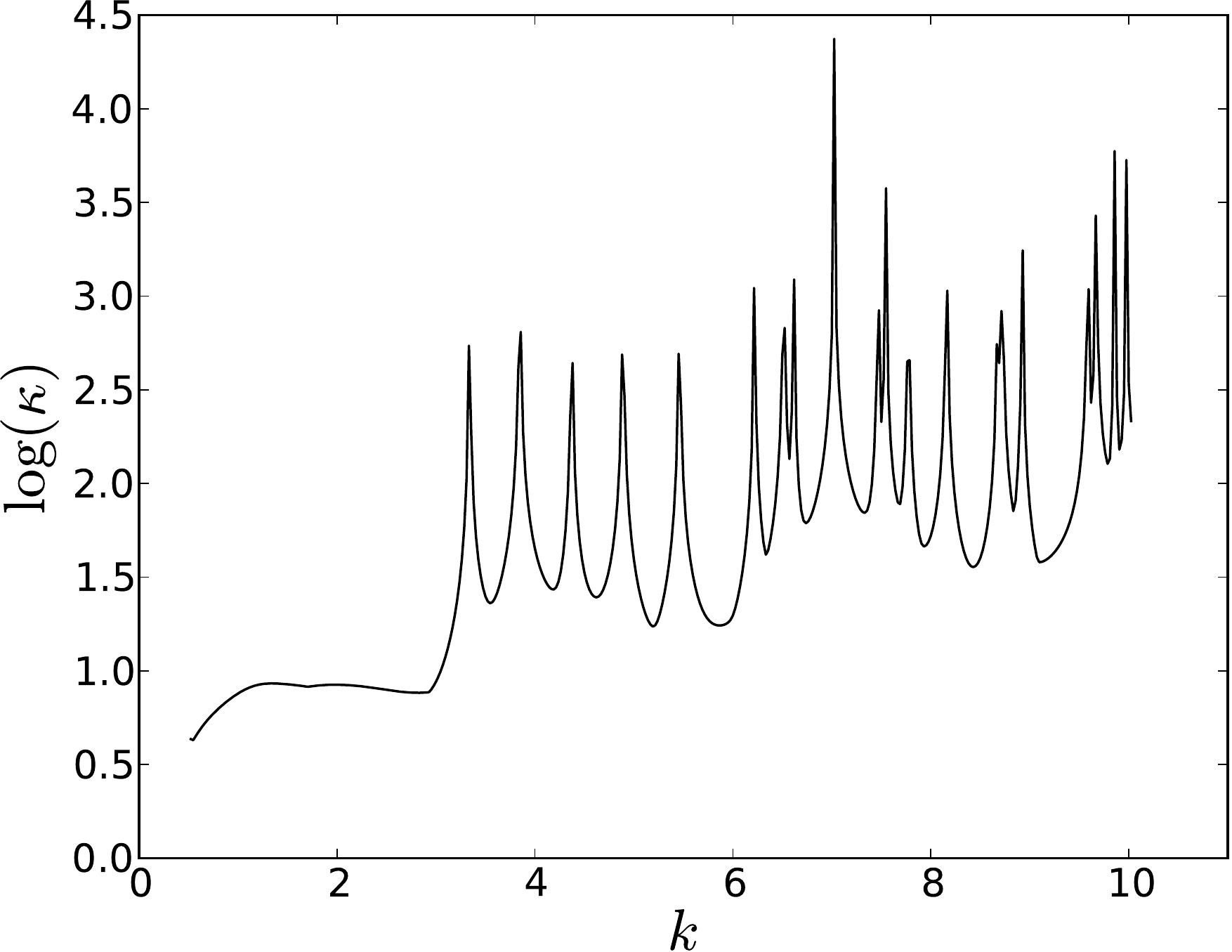}} \\
  \subfigure[Mode 2.]
  {\includegraphics[width=.45\linewidth]{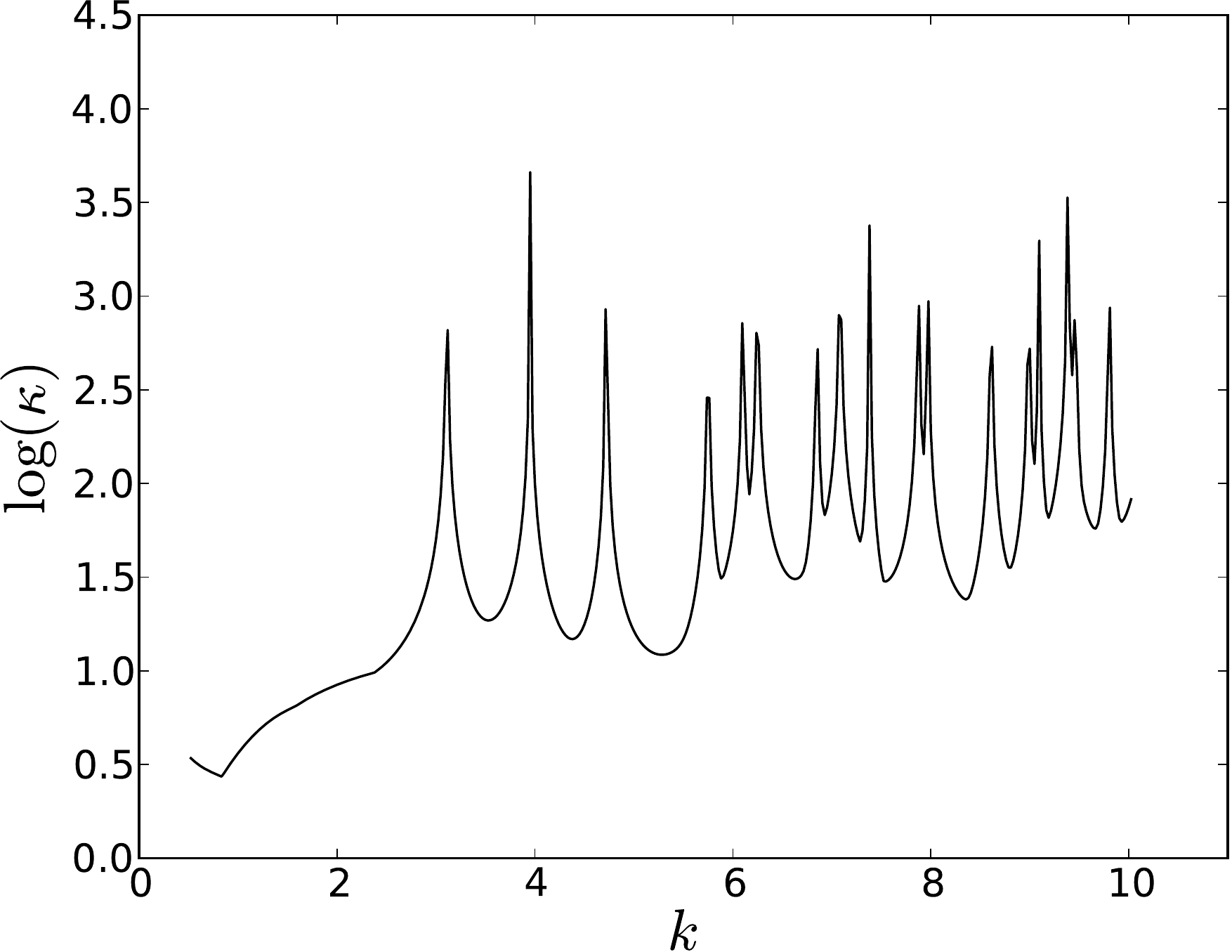}} \\
\caption{A plot of the condition number of the linear system
(\ref{eq-b}) on the torus (\ref{eq-torus}). Peaks
correspond to likely Beltrami resonances.}
\label{fig-ks}
\end{figure}

\begin{figure}[ht]
\centering
\includegraphics[width=.9\linewidth]{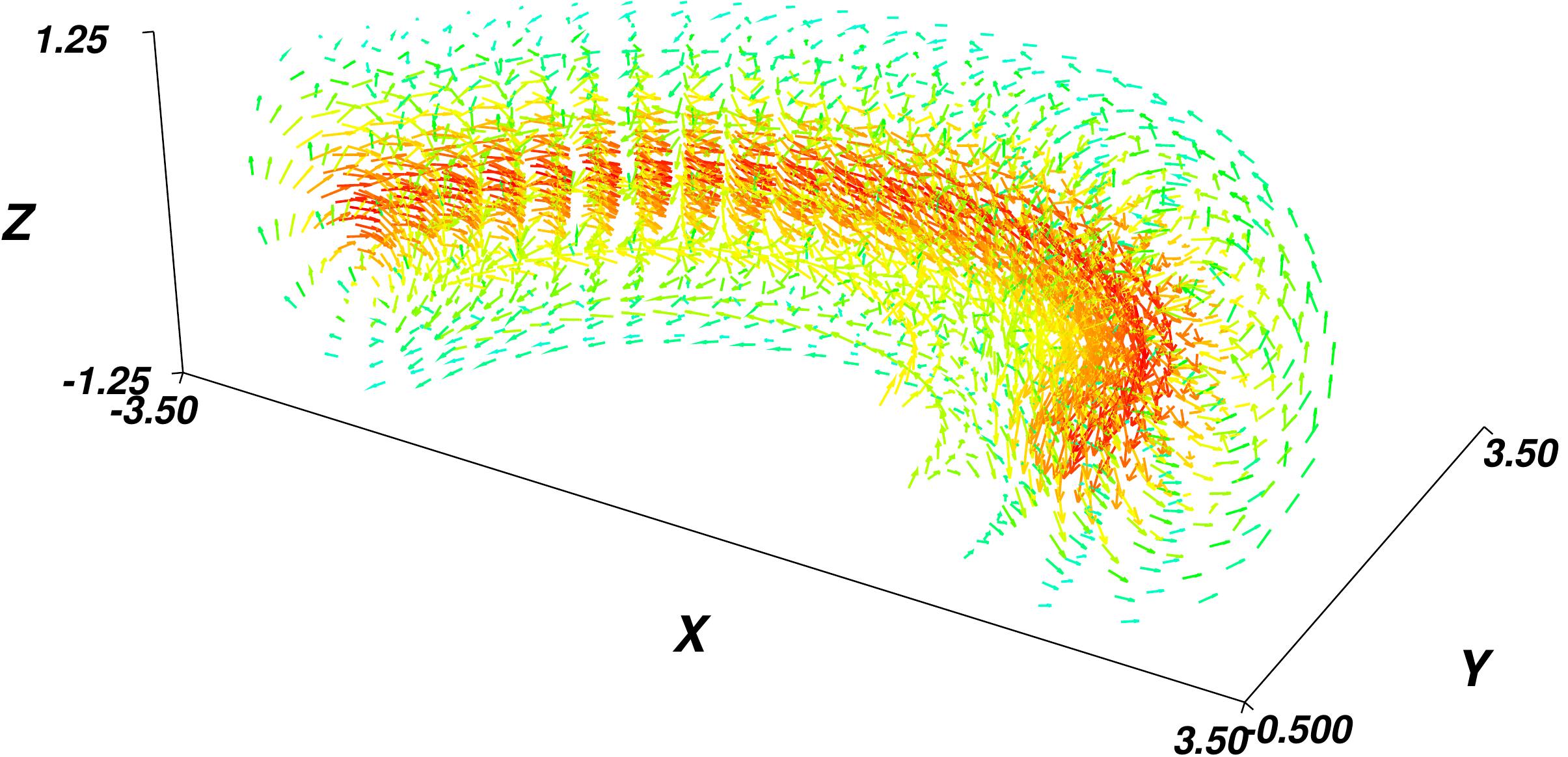}
\caption{A volume plot of the Beltrami field inside the
torus corresponding to the lowest resonance in the purely axisymmetric
mode.}
\label{fig-volume}
\end{figure}

\begin{figure}[p]
\centering
  \subfigure[Mode 0.]
  {\includegraphics[width=.45\linewidth]{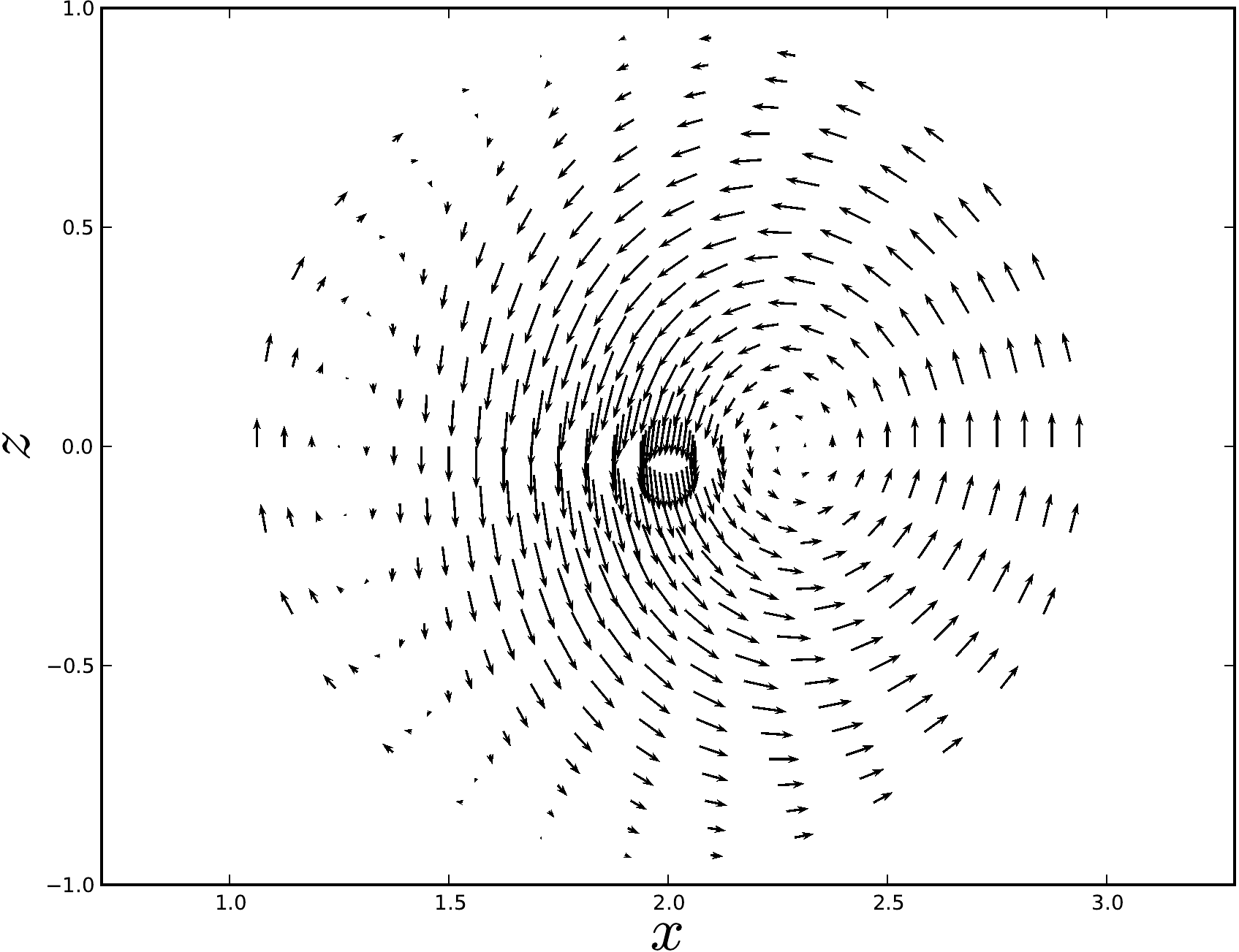}} \\
  \subfigure[Mode 1.]
  {\includegraphics[width=.45\linewidth]{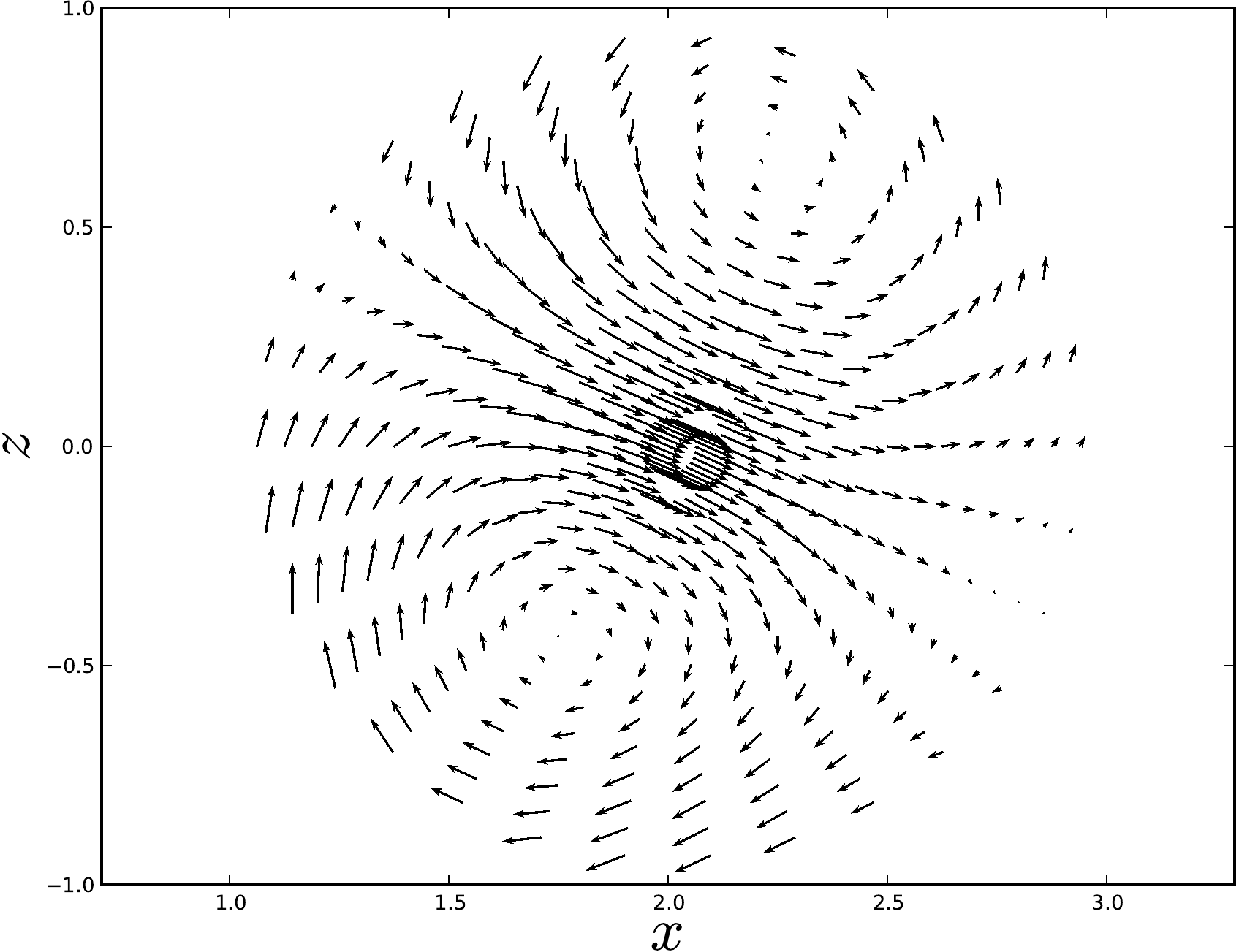}} \\
  \subfigure[Mode 2.]
  {\includegraphics[width=.45\linewidth]{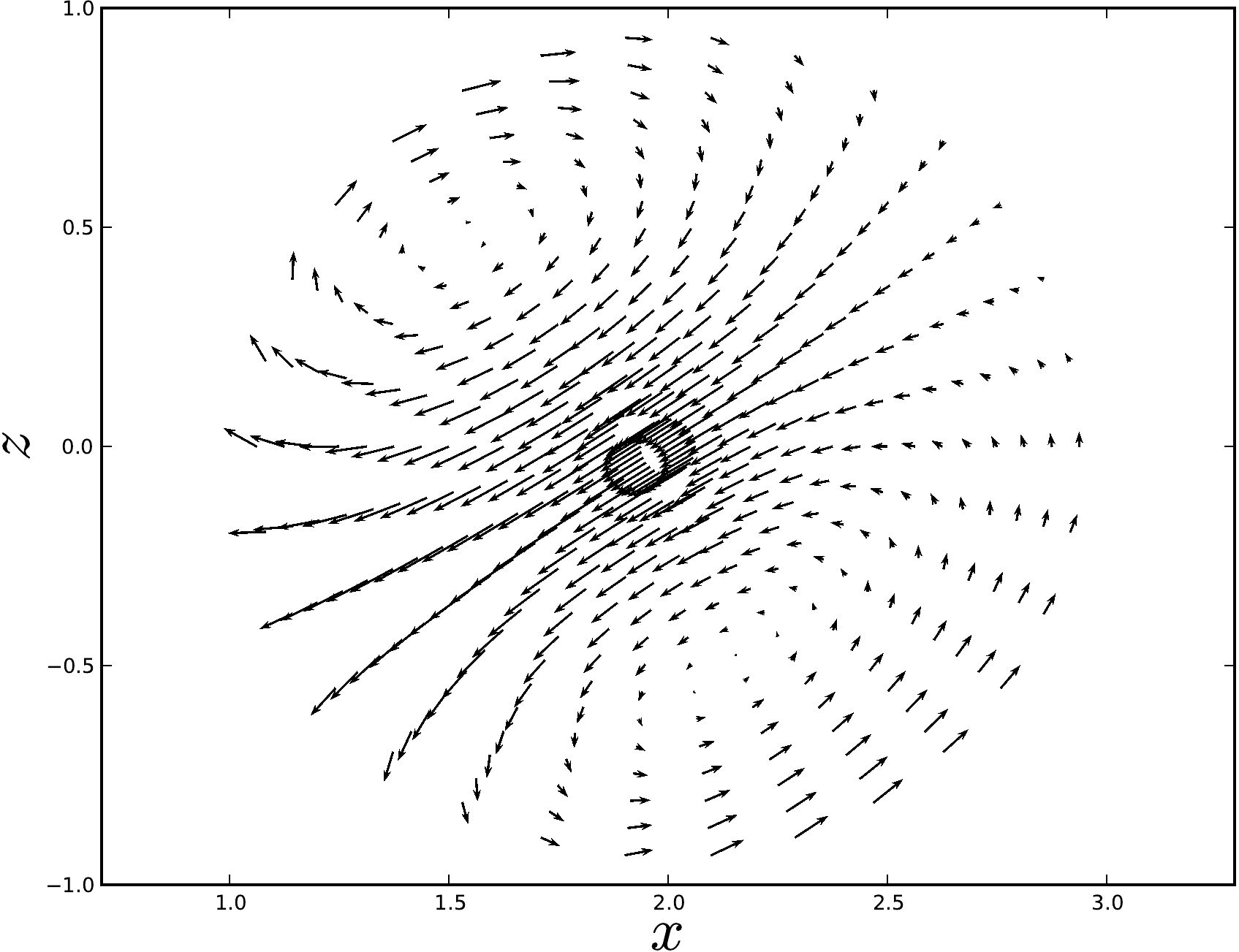}} \\
\caption{Projections in the $xz$-plane of Beltrami fields
corresponding to the lowest resonance in the first few
Fourier modes.}
\label{fig-slice}
\end{figure}

Lastly, the final set of numerical experiments we conduct addresses
the relationship between the Beltrami resonances $k_j$ and a
continuous family of boundary value problems. One may replace the
integral condition in the boundary value problem~(\ref{eq-linsys})
with a family of homogeneous topological conditions:
\begin{equation}\label{eq-tau}
  t \int\limits_{A} \bxi + (1-t) \int\limits_{B} \bxi = 0,
\end{equation}
with $t \in [0,1]$ and $B$ given by equation~(\ref{eq-torus}) with $\theta \in
[0,2\pi)$ and $\phi=\pi$. We wish to study the dependence of the location of
the Beltrami resonances in the axisymmetric mode as a function of $t$.  A plot
of the condition number of $\mathcal A$ for various values of $k$ and $t$ is
given in Figure~\ref{fig-tau}. Notice that as $t$ moves from $0 \rightarrow 1$,
the location of the first resonance decreases. For $t\neq 0,$ the null-space is
spanned by $\{du:\: u\text{ is harmonic}\}.$ At $t = 1$ the generator of
$\cH^1(D)$ defines an additional one-dimensional null-space since its integral
around the $A$-cycle vanishes. In the notation of the previous section, this
cycle defines $\lambda_D.$ It is interesting to note that while most of the
resonances appear to move as the value of $t$ changes, the resonance near
$3.8577285...$ seems to be fixed (this resonance appears in
Table~\ref{tab-ks}).  The nature of this behavior has yet to be understood.

\begin{figure}[th]
\centering
\includegraphics[width=.8\linewidth]{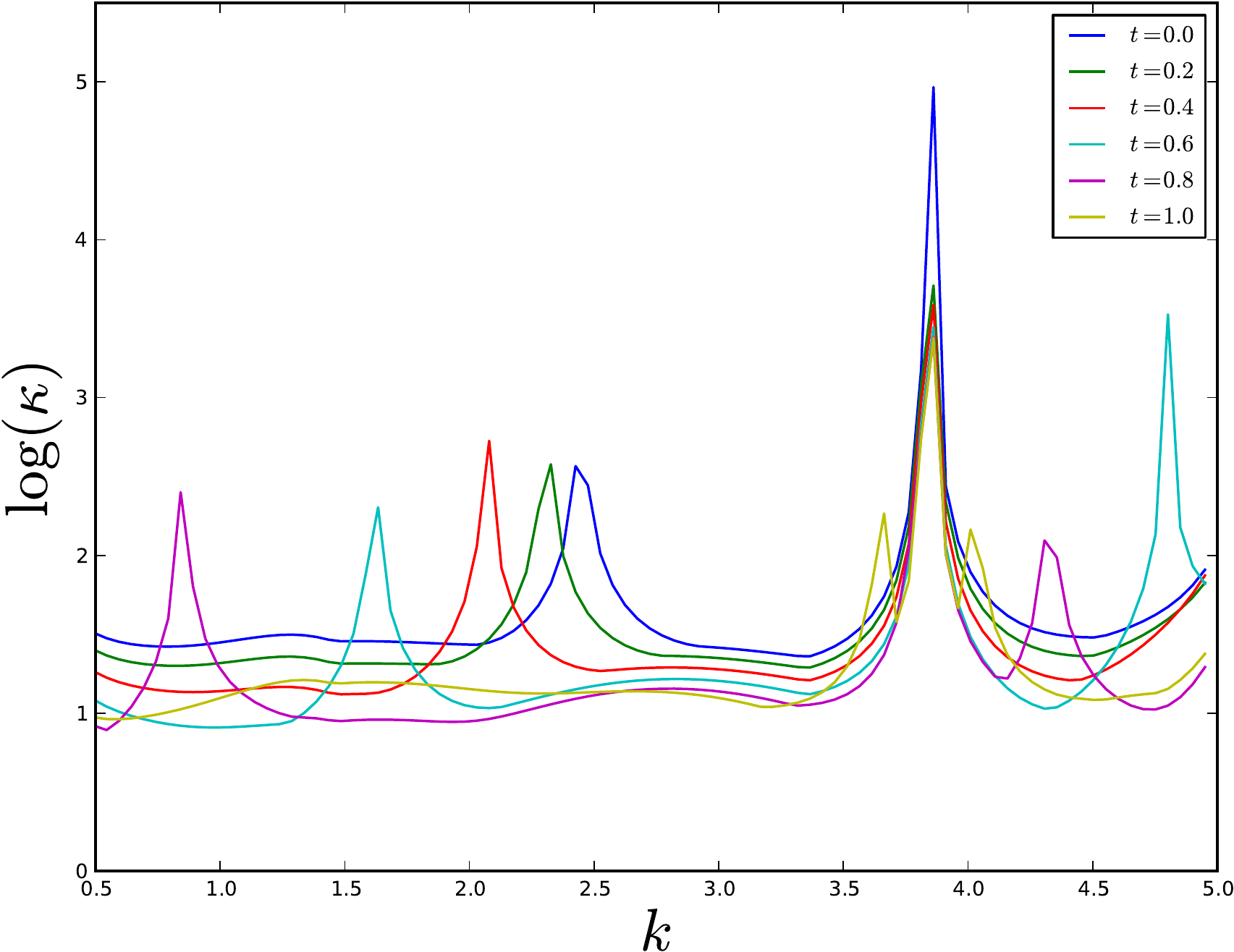}
\caption{The condition number of the matrix $\mathcal A$ for various values of
  $k$ and $t$ (different boundary conditions) in the axially symmetric mode,
  see equation~(\ref{eq-tau}). As described earlier, peaks correspond to likely
  Beltrami resonances.}
\label{fig-tau}
\end{figure}

Using an analogous code, calculations similar to the previous
examples are possible for any genus~$1$ surface of revolution. Results
of those calculations, as well as results for genus~$>1$ surfaces of
revolution, will be presented in a subsequent paper.

\subsection{The Time-Harmonic Maxwell Equation}\label{subsec7.4}
A similar analysis can be applied to the time-harmonic Maxwell
equations.  We use the usual inner product:
\begin{equation}
  \langle(\bxi,\bEta),(\balpha,\bBeta)\rangle=
\int\limits_{D}[\bxi\wedge\star_3\overline{\balpha}+
\bEta\wedge\star_3\overline{\bBeta}].
\end{equation}
Let $\cE_2(D)$ be the $L^2$-closure of divergence-free pairs,
$(\bxi,\bEta)$.  Working with smooth forms, we see that
\begin{equation}\label{eqn3.32.002}
   \langle\cL(\bxi,\bEta),(\balpha,\bBeta)\rangle-
   \langle(\bxi,\bEta),\cL(\balpha,\bBeta)\rangle=
i\int\limits_{\partial D}[\star_3\bEta\wedge\overline{\balpha}-\bxi\wedge\star_3\overline{\bBeta}].
\end{equation}
The integral on the right hand side defines a symplectic structure on
the space of tangential boundary data
$(\bxi_t,[\star_3\bEta]_t)$. From this it is clear that the problem of
finding domains on which the operator $\cL$ is essentially
self-adjoint reduces to the problem of finding Lagrangian subspaces
relative to this symplectic form.

The classical local boundary conditions, either $\bxi_t=0$, or
$[\star_3\bEta]_t=0$, certainly define such Lagrangian subspaces. Using the
Hodge decomposition on $\partial D$, formula~\eqref{eqn7.2.005},
gives a very large and interesting family of non-local boundary
conditions that define unbounded, self-adjoint operators on
$\cE_2(D)$.

As noted, the integral on the right hand side of~\eqref{eqn3.32.002}
defines a symplectic form on the product space $H^1_{\dR}(\partial
D)\times H^1_{\dR}(\partial D)$. For each Lagrangian subspace
$\lambda\in \Lambda_H^1(\partial D)$, the subspace
$\lambda\oplus\lambda$ is a Lagrangian subspace of the product space,
but there are many others. Let $\Lambda^{(2)}(\partial D)$ denote the
Lagrangian subspaces of $H^1_{\dR}(\partial D)\times
H^1_{\dR}(\partial D)$. For $\lambda^{(2)}\in \Lambda^{(2)}(\partial
D)$, let $\cD(\lambda^{(2)})$ denote $(\bxi,\bEta)\in\cE_2(D)$ so that
$d\bxi$ and $d^*\bEta$ are in $L^2(D)$,
\begin{equation}
  d_{\partial D}\bxi_t=0 \quad
\text{ and } \quad d_{\partial D}[\star_3\bEta]_t=0,
\end{equation}
and the pair of cohomology classes defined by $([\bxi_t],
[\star_3\bEta]_t)$ lies in $\lambda^{(2)}$. In terms of the Hodge
decomposition, this can be rephrased as
\begin{equation}
  \bxi_t=df+\omega_1 \quad \text{ and } \quad [\star_3\bEta]_t=dg+\omega_2
\end{equation}
where $([\omega_1],[\omega_2])\in \lambda^{(2)}$. It is important to
note that this condition makes sense even for distributional solutions
of the equation $d_{\partial D}\bomega_t=0$. It is quite clear that,
for two pairs $(\bxi,\bEta)$ and $(\balpha,\bBeta)$ in
$\cD(\lambda^{(2)})$, the right hand side of~\eqref{eqn3.32.002}
vanishes. This shows that $\cL$ with domain $\cD(\lambda^{(2)})$ takes
values in $\cE_2(D)$ and is symmetric.

As in the previous case, a complicating feature of these operators is that each
has an infinite dimensional null-space. Let $(u,v)$ be a pair of $W^{1,2}(D)$
harmonic functions defined in $D$, then the pair of divergence-free 1-forms
$(du,\star_3dv)$ belongs to $\cD(\lambda^{(2)})$ for every
$\lambda^{(2)}\in\Lambda^{(2)}_H(\partial D)$, and clearly
$\cL(du,\star_3dv)=0$. Indeed this set constitutes a finite codimension subspace of
the null-space of $(\cL,\cD(\lambda^{(2)}))$.

The self-adjointness of these operators is again not entirely obvious.  Though
we do not give a detailed proof here, Lemmas~\ref{lem7.2.008} and~\ref{lem7.5}
can be used to give a proof (quite similar to the proof of
Theorem~\ref{thm7.2.001}) of the following theorem.
\begin{theorem}\label{thm7.9.0001} 
Let $D$ be a connected bounded domain in $\bbR^3$ with a smooth
boundary. For each $\lambda^{(2)}\in\Lambda^{(2)}_H(\partial D)$, the
densely operator operator $(\cL,\cD(\lambda^{(2)}))$ is self-adjoint. The self-adjoint
operator defined on the orthocomplement of the
$\Ker(\cL,\cD(\lambda^{(2)}))$ has a compact resolvent, and therefore
a real, discrete spectrum, $\{k_j(\lambda^{(2)})\}$. We let
$\{(\bxi_j(\lambda^{(2)}),\bEta_j(\lambda^{(2)}))\}$ denote the
corresponding eigenvectors. They satisfy
\begin{equation}
\begin{split}
  d\bxi_j(\lambda^{(2)})&=ik_j(\lambda^{(2)})\bEta_j(\lambda^{(2)}), \\
d^*\bEta_j(\lambda^{(2)})&=-ik_j(\lambda^{(2)})\bxi_j(\lambda^{(2)}),
\end{split}
\end{equation}
and
\begin{equation}
  d_{\partial D}\bxi_{jt}(\lambda^{(2)})=0 \quad \text{ and }
\quad d_{\partial D}[\star_3\bEta_j(\lambda^{(2)})]_t=0,
\end{equation}
which, as $k_j(\lambda^{(2)})\neq 0$,   implies that
\begin{equation}
  i_{\bn}\bxi_{j}(\lambda^{(2)})=0 \quad \text{ and } \quad 
i_{\bn}\star_3\bEta_j(\lambda^{(2)})=0.
\end{equation}
\end{theorem}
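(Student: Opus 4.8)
The plan is to transcribe the proof of Theorem~\ref{thm7.2.001} to the pair operator $(\cL,\cD(\lambda^{(2)}))$, using the abstract criterion of Lemma~\ref{lem7.5} with $X=\cE_2(D)$ and $Y$ the subspace of $W^{1,2}(D;\Lambda^1)\times W^{1,2}(D;\Lambda^2)$ consisting of pairs that are divergence free, satisfy $d_{\pa D}\bxi_t=0$ and $d_{\pa D}[\star_3\bEta]_t=0$, and whose boundary cohomology class $([\bxi_t],[\star_3\bEta]_t)$ lies in $\lambda^{(2)}$. First I would establish that $\cD(\lambda^{(2)})$ is dense in $\cE_2(D)$; this follows by applying the extension construction of Lemma~\ref{lem7.2.008} separately to $\bxi$ and to $\star_3\bEta$, since the two tangential boundary data can be corrected independently. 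Symmetry is already in hand: formula~\eqref{eqn3.32.002} exhibits the boundary defect as the symplectic pairing of $([\bxi_t],[\star_3\bEta]_t)$ and $([\balpha_t],[\star_3\bBeta]_t)$, which vanishes precisely because both pairs lie in the Lagrangian subspace $\lambda^{(2)}$.

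Next I would characterize $\Ker(\cL,\cD(\lambda^{(2)}))$. A pair in the kernel satisfies $d\bxi=0$ and $d^*\bEta=0$ together with the standing divergence conditions, so $\bxi=du+(\text{harmonic})$ and $\star_3\bEta=dv+(\text{coclosed harmonic})$ with $u,v$ harmonic in $D$; the harmonic part is constrained by the requirement that $([\bxi_t],[\star_3\bEta]_t)\in\lambda^{(2)}\cap(\lambda_D\oplus\lambda_D)$, and is therefore finite dimensional, while the exact pairs $(du,\star_3dv)$ furnish the infinite-dimensional null-space already identified in the text. Testing orthogonality against $(du,\star_3 dv)$ and integrating by parts, using $d^*\bxi=0$ and $d\bEta=0$, shows that a pair in $[\Ker(\cL,\cD(\lambda^{(2)}))]^{\bot}$ must satisfy $i_{\bn}\bxi=0$ and $i_{\bn}\star_3\bEta=0$, together with orthogonality to the finitely many harmonic pieces. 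The crucial observation is that the equation $\cL(\bxi,\bEta)=(\balpha,\bBeta)$ decouples into $d\bxi=i\bBeta$ and $d^*\bEta=-i\balpha$; the first is solved by the $W^{1,2}$, normal-component-free, $\cH^1(D)$-orthogonal construction of the lemma producing $\omega_0$ in the Beltrami proof, and the second by its Hodge dual applied to $\star_3\bEta$. This produces a partial inverse $Q_0$ with $i_{\bn}$ of both components vanishing; I would then add a finite-rank harmonic correction, exactly as in the construction of the coefficients $\cM_{k}(\lambda)$, chosen so that $([\bxi_t],[\star_3\bEta]_t)$ is driven into $\lambda^{(2)}$.

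With $Q$ in hand, the estimate $\|Q\pi_1 x\|_Y\le C\|x\|_X$ follows componentwise from~\eqref{eqn7.8.007} and~\eqref{eqn7.50.007}, and Lemma~\ref{lem7.5} then yields invertibility of $(\cL-\mu)$ for all small nonzero $\mu$; by Theorem 13.11(d) of~\cite{RudinFA} this establishes self-adjointness. Because $Q$ maps into $W^{1,2}(D;\Lambda^1)\times W^{1,2}(D;\Lambda^2)$, which embeds compactly in $\cE_2(D)$, the resolvent on the orthocomplement of the kernel is compact, giving a real discrete spectrum $\{k_j(\lambda^{(2)})\}$; the eigenvector equations and the vanishing of $i_{\bn}\bxi_j$ and $i_{\bn}\star_3\bEta_j$ come from orthogonality to the kernel together with $k_j\neq0$, and smoothness on $\overline D$ follows by the bootstrap using~\eqref{eqn7.9.007} exactly as before. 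I expect the main obstacle to be the harmonic-correction step: since a general Lagrangian $\lambda^{(2)}$ need not split as $\lambda\oplus\lambda$, the two components of the pair are coupled through the single boundary condition even though the PDE separates, so one must verify that the finite-dimensional linear system determining the harmonic corrections is solvable. This amounts to checking that the relevant matrix of period/pairing functionals has maximal rank for an arbitrary Lagrangian in the product symplectic space $H^1_{\dR}(\pa D)\times H^1_{\dR}(\pa D)$, rather than for a split Lagrangian, which is the one genuinely new point beyond Theorem~\ref{thm7.2.001}.
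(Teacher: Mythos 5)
Your proposal is correct and takes essentially the approach the paper itself prescribes: the paper gives no detailed proof of Theorem~\ref{thm7.9.0001}, stating only that Lemmas~\ref{lem7.2.008} and~\ref{lem7.5} yield a proof ``quite similar to'' that of Theorem~\ref{thm7.2.001}, and your transcription (componentwise density, symmetry from~\eqref{eqn3.32.002} plus the Lagrangian condition, decoupled partial inverse, finite-rank harmonic correction, then compactness and bootstrap) is exactly that argument. The one point you flag as genuinely new does go through for a non-split $\lambda^{(2)}$: the Stokes argument shows that the boundary class of the partial inverse applied to a source orthogonal to the kernel is symplectically orthogonal to $\lambda^{(2)}\cap(\lambda_D\oplus\lambda_D)$, and since any two Lagrangian subspaces $\lambda,\mu$ of a finite-dimensional symplectic vector space satisfy $\lambda+\mu=(\lambda\cap\mu)^{\perp}$ (isotropy gives the inclusion, and a dimension count gives equality), the required correction by an element of $\lambda_D\oplus\lambda_D$ always exists.
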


The final statement in the theorem implies that the eigenvectors of
these operators are also $k_j$-Neumann fields. These eigenfields
satisfy an additional condition:
\begin{equation}
  \int\limits_{\partial
    D}\bxi_{jt}(\lambda^{(2)})\wedge\overline{[\star_3\bEta_j(\lambda^{(2)})]_t}=0.
\end{equation}
It should be noted that in the exterior case we proved that if
this integral vanishes for an outgoing field, then it is identically zero.
Below we explain why this condition does not hold for every
$k$-Neumannn field defined in $D$.

There are several distinguished elements in $\Lambda^{(2)}(\partial
D)$, which merit special attention. If $\lambda_D$ is the element
defined by $H^1_{\dR}(D)\hookrightarrow H^1_{\dR}(\partial D)$, then
$\lambda_D\oplus\lambda_D$ is the analogue of the element defined in
the previous subsection. We can equally well consider the subspace
$\lambda_{D^c}\oplus\lambda_{D^c}$. The eigenfields of
$(\cL,\cD(\lambda_D\oplus\lambda_D))$ are $k$-Neumann fields
$(\bxi,\bEta)$ with
\begin{equation}
\begin{split}
  \int\limits_{A_j}\bxi&=ik\int\limits_{S_j}\bEta=0,\\
  \int\limits_{A_j}\star_3\bEta&=-ik\int\limits_{S_j}\star_3\bxi=0,
\end{split}
\end{equation}
for $j=1,\dots,p$. These are {\em zero-flux} $k$-Neumann fields. The
existence of these fields is proved in~\cite{Kress1}.

Let ${B_j}$ denote a basis of 1-cycles on $\partial D$ that bound
2-cycles in $D^c$. A closed 1-form, $\balpha$, belongs to
$\lambda_{D^c}$ if and only if for $j = 1, \ldots, p$
\begin{equation}
  \int\limits_{B_j}\balpha=0.
\end{equation}
From the boundary value problem
$(\cL,\cD(\lambda_{D^c}\oplus\lambda_{D^c}))$ we get a sequence of
frequencies for which there are $k$-Neumann fields $(\bxi,\bEta)$
defined in $D$ that satisfy:
\begin{equation}
  \int\limits_{B_j}\bxi=\int\limits_{B_j}\star_3\bEta=0.
\end{equation}

We observe that if $k$ is not in the spectrum of either
$(\cL,\cD(\lambda_D\oplus\lambda_D))$ or
$(\cL,\cD(\lambda_{D^c}\oplus\lambda_{D^c}))$, then we can find
$k$-Neumann fields $(\bxi,\bEta)$ for which
\begin{equation}
  \int\limits_{\partial D}\bxi\wedge\star_3\overline{\bEta}\neq 0.
\end{equation}
With this hypothesis on $k$ we can find $k$-Neumann fields
$(\balpha_1,\bBeta_1)\in \cD(\lambda_D\oplus\lambda_D)$ so that for
$j = 1, \ldots, p$ the integrals
\begin{equation}
  \int\limits_{A_j}\balpha_1 \quad \text{ and } \quad \int\limits_{A_j}\bBeta_1
\end{equation}
take arbitrarily specified values, and $(\balpha_2,\bBeta_2)\in
\cD(\lambda_{D^c}\oplus\lambda_{D^c})$ so that the integrals
\begin{equation}
  \int\limits_{B_j}\balpha_2 \quad \text{ and } \quad \int\limits_{B_j}\bBeta_2
\end{equation}
also take arbitrarily specified values. Since $\lambda_D$ and
$\lambda_{D^c}$ are complementary Lagrangian sub-spaces, these values
can be selected so that
\begin{equation}
  \int\limits_{\partial D}(\balpha_1+\balpha_2)\wedge\star_3(\overline{\bBeta_1+\bBeta_2})=
\int\limits_{\partial D}[\balpha_1\wedge\star_3\overline{\bBeta_2}+\balpha_2\wedge\star_3\overline{\bBeta_1}]
\end{equation}
is non-zero.

For the study of $k$-Neumann fields, the elements
\[
\lambda^{(2)}_1=\{0\}\times H^1_{\dR}(\partial D) \quad 
\text{ and } \quad \lambda^{(2)}_2=H^1_{\dR}(\partial D)\times\{0\}
\]
are perhaps of greater interest.  Recall that $k\in E^-_n$ if there
exists $(\bxi,\bEta)$ which is a topologically trivial $k$-Neumann
field, that is, $[\bxi_t]$ is trivial in $H^1_{\dR}(\partial D)$. The
eigenfields of the operator $(\cL,\cD(\lambda_1^{(2)}))$ are precisely
the topologically trivial $k$-Neumann
fields. Theorem~\ref{thm7.9.0001} shows that the operator
$(\cL,\cD(\lambda_1^{(2)}))$ is self-adjoint, with compact resolvent on
the orthogonal complement of the null-space. The existence of infinite
sequence of real frequencies $\{k_j(\lambda_1^{(2)})\}$ for which
topologically trivial $k$-Neumann fields exist is immediate. The
spectral theory of the operator $(\cL,\cD(\lambda_2^{(2)}))$ leads to
the existence of a sequence of frequencies for which there are
$k$-Neumann fields with the $\bEta$-component topologically
trivial. Note that if there is a $k_0$ for which there is a
$k$-Neumann field with both $\bxi_t$ and $[\star_3\bEta]_t$
topologically trivial, then $k_0$ belongs to the spectrum of the
operator $(\cL,\cD(\lambda^{(2)}))$ for every
$\lambda^{(2)}\in\Lambda^{(2)}_H(D)$.

For $\lambda^{(2)}\in\Lambda^{(2)}_H(\partial D)$, we let
$\sigma(\cL,\lambda^{(2)})$ denote the non-zero spectrum of
$(\cL,\cD(\lambda^{(2)}))$. We have the following analogue of
Corollary~\ref{cor7.7.003}:
\begin{corollary}\label{cor7.10.001} Let
  \begin{equation}
    E_{\cL}=\bigcap_{\lambda^{(2)}\in \Lambda^{(2)}_H(\partial D)}\sigma(\cL,\lambda^{(2)}).
  \end{equation}
For $k\in \mathbb C^+\setminus E_{\cL}$, the space of $k$-Neumann fields, $\cN_k(D)$, has dimension $2p$.
\end{corollary}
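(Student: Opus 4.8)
The plan is to mimic the proof of Corollary~\ref{cor7.7.003}, replacing the one–sided cohomological data of the Beltrami problem with the two–sided data natural to the full Maxwell system. Concretely, I would introduce the boundary map
\begin{equation*}
\Phi_k:\cN_k(D)\longrightarrow H^1_{\dR}(\partial D)\times H^1_{\dR}(\partial D),
\qquad (\bxi,\bEta)\longmapsto\big([\bxi_t],[\star_3\bEta]_t\big),
\end{equation*}
which is well defined because, for a $k$-Neumann field, both $\bxi_t$ and $[\star_3\bEta]_t$ are closed 1-forms on $\partial D$. The target carries the non-degenerate symplectic form
\begin{equation*}
\Omega\big(([u_1],[v_1]),([u_2],[v_2])\big)=i\int\limits_{\partial D}[v_1\wedge u_2-u_1\wedge v_2],
\end{equation*}
which is exactly the form appearing on the right of~\eqref{eqn3.32.002}. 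Since $\partial D$ has total genus $p$, the space $H^1_{\dR}(\partial D)\times H^1_{\dR}(\partial D)$ has dimension $4p$, so each of its isotropic subspaces has dimension at most $2p$.

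The first step is to show that the image of $\Phi_k$ is isotropic. Let $B(\cdot,\cdot)$ denote the symmetric bilinear pairing on $\cE_2(D)$ obtained from $\langle\cdot,\cdot\rangle$ by dropping the complex conjugation. For two $k$-Neumann fields $(\bxi,\bEta)$ and $(\balpha,\bBeta)$ with the \emph{same} eigenvalue $k$, the bilinear analogue of the integration-by-parts identity~\eqref{eqn3.32.002} reads
\begin{equation*}
B\big(\cL(\bxi,\bEta),(\balpha,\bBeta)\big)-B\big((\bxi,\bEta),\cL(\balpha,\bBeta)\big)
=i\int\limits_{\partial D}[\star_3\bEta\wedge\balpha-\bxi\wedge\star_3\bBeta].
\end{equation*}
The left-hand side equals $(k-k)B\big((\bxi,\bEta),(\balpha,\bBeta)\big)=0$ — note that this uses only that both fields carry the same $k$, so it is valid for complex $k$ as well — while the right-hand side is precisely $\Omega\big(\Phi_k(\bxi,\bEta),\Phi_k(\balpha,\bBeta)\big)$, since the integrand depends only on the cohomology classes of the closed tangential data. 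Hence $\dim\Phi_k(\cN_k(D))\leq 2p$.

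The second step is to show that $\Phi_k$ is injective for $k\in\mathbb C^+\setminus E_{\cL}$, $k\neq 0$. An element of $\Ker\Phi_k$ is a $k$-Neumann field $(\bxi,\bEta)$ with $[\bxi_t]=[\star_3\bEta]_t=0$ in $H^1_{\dR}(\partial D)$; as $(0,0)\in\lambda^{(2)}$ for every Lagrangian subspace, such a field lies in $\cD(\lambda^{(2)})$ and is therefore a nonzero eigenfield of $(\cL,\cD(\lambda^{(2)}))$ with eigenvalue $k$ for \emph{every} $\lambda^{(2)}\in\Lambda^{(2)}_H(\partial D)$. Thus $\Ker\Phi_k\neq\{0\}$ forces $k\in\bigcap_{\lambda^{(2)}}\sigma(\cL,\lambda^{(2)})=E_{\cL}$, and contrapositively $\Phi_k$ is injective off $E_{\cL}$; for non-real $k$ this holds automatically, since each $\sigma(\cL,\lambda^{(2)})$ is real by Theorem~\ref{thm7.9.0001}. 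The case $k=0$ is disposed of directly by Hodge theory: the conditions defining $\cN_0(D)$ force $\bxi,\star_3\bEta\in\cH^1(D)$, giving $\dim\cN_0(D)=2p$, and $0\notin E_{\cL}$ by definition.

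Finally I would combine these with the a priori bound $\dim\cN_k(D)\geq 2p$, valid for every $k\in\mathbb C^+$ and established in the corollary following Corollary~\ref{cor5.2.002}. For $k\in\mathbb C^+\setminus E_{\cL}$ the injectivity of $\Phi_k$ gives $\dim\cN_k(D)=\dim\Phi_k(\cN_k(D))\leq 2p$, and together with the lower bound this yields $\dim\cN_k(D)=2p$, as claimed. The step I expect to be the main obstacle is the isotropy identity: one must verify carefully that the boundary term produced by integrating by parts against the unconjugated pairing $B$ is still the symplectic form $\Omega$ on cohomology — in particular that no reality assumption on $k$ slips in — so that the argument covers the genuinely new frequencies $k\in E^-_n\setminus E_{\cL}$, where the earlier corollary supplies only the inequality $\dim\cN_k(D)\geq 2p$.
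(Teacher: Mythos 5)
Your Steps 2--4 are fine, and your route is genuinely different from the paper's: the paper proves this corollary ``essentially identically'' to Corollary~\ref{cor7.7.003}, i.e.\ through the index-zero Fredholm systems for the Debye sources (the Maxwell analogue of~\eqref{eqn7.72.003}) together with Theorem~\ref{prop3.5.01}, while you get the upper bound by boundary linear algebra and import the lower bound $\dim\cN_k(D)\ge 2p$ from the corollary following Corollary~\ref{cor5.2.002}. The gap is exactly in the step you flagged: the ``bilinear analogue'' of~\eqref{eqn3.32.002} is \emph{not} obtained by erasing the conjugation bars. The bars interact with the factor $i$ in $\cL$ (conjugating $\cL(\balpha,\bBeta)=(id^*\bBeta,-id\balpha)$ reverses the sign of $i$), and that interaction is what produces the relative sign in~\eqref{eqn3.32.002}. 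Redoing Stokes' theorem with no conjugation, for two solutions of the Maxwell system with the \emph{same} $k$, one gets
\begin{equation*}
\int\limits_{\partial D}\bigl[\balpha\wedge\star_3\bEta-\bxi\wedge\star_3\bBeta\bigr]
=\int\limits_{D}d\bigl[\balpha\wedge\star_3\bEta-\bxi\wedge\star_3\bBeta\bigr]
=ik\int\limits_{D}\bigl[\bBeta\wedge\star_3\bEta-\bEta\wedge\star_3\bBeta+\balpha\wedge\star_3\bxi-\bxi\wedge\star_3\balpha\bigr]=0 ,
\end{equation*}
so the form that vanishes on the image of $\Phi_k$ is
\begin{equation*}
Q\bigl(([u_1],[v_1]),([u_2],[v_2])\bigr)=\int\limits_{\partial D}\bigl[u_2\wedge v_1-u_1\wedge v_2\bigr],
\end{equation*}
where $u_1=\bxi_t$, $v_1=[\star_3\bEta]_t$, $u_2=\balpha_t$, $v_2=[\star_3\bBeta]_t$. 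Up to the overall factor of $i$, this differs from your $\Omega$ by the sign of the first term, since $u_2\wedge v_1=-v_1\wedge u_2$. The difference is not cosmetic: $Q$ is antisymmetric, whereas your $\Omega$ is \emph{symmetric} (a warning that it is not the symplectic form induced by the boundary pairing), and --- the fatal point --- the image of $\Phi_k$ is in general \emph{not} $\Omega$-isotropic. Indeed, $Q\equiv 0$ on the image together with $\Omega$-isotropy would force $\int_{\partial D}\bxi_t\wedge[\star_3\bBeta]_t=0$ for every pair of $k$-Neumann fields; but Section~\ref{subsec7.4} exhibits, for real $k$ outside the spectra of $(\cL,\cD(\lambda_D\oplus\lambda_D))$ and $(\cL,\cD(\lambda_{D^c}\oplus\lambda_{D^c}))$, $k$-Neumann fields with $\int_{\partial D}\bxi\wedge\star_3\overline{\bEta}\neq 0$, and pairing such a field with its conjugate partner $(\overline{\bxi},-\overline{\bEta})$ (again a $k$-Neumann field, $k$ being real) makes your $\Omega$ nonzero.

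The repair costs nothing structurally: run your argument with $Q$ in place of $\Omega$. $Q$ is non-degenerate on $H^1_{\dR}(\partial D)\times H^1_{\dR}(\partial D)$ by Poincar\'e duality, so $Q$-isotropy of the image still yields $\dim\Phi_k(\cN_k(D))\le 2p$; the displayed identity uses no reality of $k$, so the bound does reach the genuinely new frequencies $k\in E^-_n\setminus E_{\cL}$; and your injectivity step, the $k=0$ Hodge-theoretic case, and the lower bound then close the proof exactly as you wrote. With that one sign corrected, your argument is a valid and more elementary alternative to the paper's integral-equation proof.
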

\noindent
The proof of this corollary is essentially identical to that of
Corollary~\ref{cor7.7.003}.
\begin{remark} It is again  quite an interesting question whether or not
  $E_{\cL}=\emptyset$. As in the Beltrami case, if
  $E_{\cL}=\emptyset$, then $\dim \cN_k(D)=2p$ for every $k\in \mathbb
  C^+$. If $E_{\cL}\neq\emptyset$, then this set is a new spectral
  invariant of the embedding of $\partial D$ into $\bbR^3$.
\end{remark}

The study of the family of operators
$\{(\cL,\cD(\lambda^{(2)})):\:\lambda^{(2)}\in\Lambda^{(2)}_H(\partial
D)\}$ seems a very natural and interesting problem. In particular it
seems quite interesting to analyze how the spectrum depends on the
choice of Lagrangian subspace. The existence of this large family of
operators whose spectra include the $k$-Neumann resonances shows that
these rather mysterious numbers are really part of a larger whole. It
seems quite likely that any real $k$ arises as eigenvalue for one of
the operators $(\cL,\cD(\lambda^{(2)}))$.

\subsection{$k$-Neumann fields on the Sphere}

If $\partial D$ is simply connected, then there is a unique,
self-adjoint boundary value problem $(\cL,\cD(0))$.  In this case
\[
  \Ker(\cL,\cD(0))=\{ (du,\star_3dv):\: u\text{ and }v \text{ are harmonic functions}\}.
\]
This operator has a real discrete spectrum, from which it is apparent
that for some values of $k$, $\dim\cN_k(D)$ does not equal $2p$ (which
in this case is $0$). We conclude our discussion by constructing
$k$-Neumann fields on the unit ball $B_1$.

\begin{figure}[th]
\centering
  \subfigure[Tangent field defined by $Y^1_2$.]
  {\includegraphics[width=.3\linewidth]{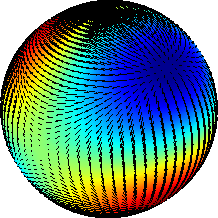}} \qquad \qquad
  \subfigure[Tangent field defined by $Y^2_4$.]
  {\includegraphics[width=.3\linewidth]{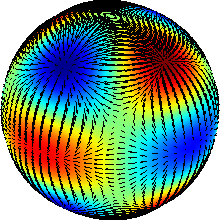}} \\
  \subfigure[Tangent field defined by $Y^6_{15}$.]
  {\includegraphics[width=.3\linewidth]{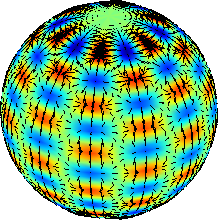}} \qquad \qquad
  \subfigure[Tangent field defined by $Y^{13}_{27}$.]
  {\includegraphics[width=.3\linewidth]{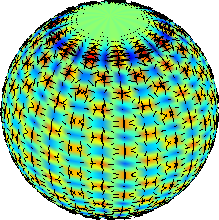}}
\caption{Tangent components of Beltrami fields on the unit ball
  defined by different choices of spherical harmonics.}
\label{SphFig}
\end{figure}

On the unit ball we
can construct $k$-Neumann fields from scalar Dirichlet eigenfunctions. Let
\begin{equation}
  L_{ij}=x_i\pa_{x_j}-x_j\pa_{x_i}
\end{equation}
denote the angular momentum operators. As is well known 
\begin{equation}
\Delta_{\bbR^3}L_{ij}=L_{ij}\Delta_{\bbR^3}.
\end{equation}
If $\Delta u=-k^2u$ and $u\restrictedto_{\|x\|=1}=0$, then the 1-form
\begin{equation}
\begin{split}
  \bxi_u&=L_{23}udx_1+L_{31}udx_2+L_{12}udx_3\\
&=\frac{1}{2}\star_3d(r^2du),
\end{split}
\end{equation}
is easily seen to be divergence-free and to vanish along the unit
sphere. If we let
\begin{equation}
  \bEta_u=\frac{1}{ik}d\bxi_u,
\end{equation}
then $d^*\bEta_u=-ik\bxi_u$. The 1-form $\bxi_u$ vanishes in all directions
along the unit sphere, which implies that $\bEta_u\restrictedto_{TS^2}=0$, and
therefore the pair $(\bxi_u,\bEta_u)$ is a $k$-Neumann field. Note that not
only is $\bxi_{ut}$ closed; it is identically zero. An easy calculation
verifies the general fact that $[\star_3\bEta_u]_t$ does not vanish. This shows
that, even though for most values of $k$ the $\dim \cN_{k}(B_1)=0$, among
$k$-Neumann resonances it can be arbitrarily large. From this example it does
not seem likely that for general domains $D$ the Dirichlet spectrum of the
Laplace operator on scalar functions will have any connection to interior
$k$-Neumann resonances.

As follows from the general theory,
$\cN_k(B_1)=\cN^{1,0}_k(B_1)\oplus\cN^{0,1}_k(B_1)$. Hence, from these
fields we can create Beltrami fields as $\bxi_u\pm i\star_3\bEta_u$. A
calculation shows that
\begin{equation}
\begin{split}
  \star_3\bEta_u&=\Delta u\bx\cdot d\bx-2du-\bx\cdot\pa_{\bx}du\\
&=-k^2 u\bx\cdot d\bx-2du-\bx\cdot\pa_{\bx}du.
\end{split}
\end{equation}
Eigenfunctions of the Laplace operator are of the form
$r^nf(k^2_{nj}r^2)Y(\theta,\phi)$, where $Y$ is a spherical harmonic
of degree $n$. The set of spherical harmonics is a $2n+1$-dimensional
vector space. The map $u\mapsto dr\wedge du$ is injective unless
$n=0$, in which case it is zero. It's quite clear that the subspace of
solutions of the form $(\bxi_u,\bEta_u)$ intersects those of the form
$(-\star_3\bEta_u,\star_3\bxi_u)$ only in the trivial solution. This
is because $\bxi_u$ vanishes along the sphere and $\star_3\bEta_u$
does not. This indicates that $\dim\cN_k(B_1)=2(2n+1)$, and therefore
\begin{equation}
  \dim\cN^{1,0}_k(B_1)= \dim\cN^{0,1}_k(B_1)=2n+1.
\end{equation}
As $\bxi_u$ vanishes along $bB_1$, we easily see that the tangent components
of $\bxi_u\pm i\star_3\bEta_u$ along $bB_1$ are constant multiples of the
vector field $\nabla_{S^2}Y$, the tangential gradient of $Y(\theta,\phi)$.
Several examples are shown in Figure~\ref{SphFig}.

\appendix

\section{Boundary operators}\label{bdryops}
Let $\Gamma$ be a smooth boundary of either a bounded domain $D$ or
unbounded domain $\Omega$ in $\bbR^3$.  We now describe the limits of
the tangential components of $\bxi$ and $\bEta$ along $\Gamma$. Along
$\Gamma$, we can introduce an adapted local basis of orthonormal
1-forms, $\{\omega_1,\omega_2,\nu\}$, and $\bn$ the outward unit
normal vector. In this basis we have
$\bxi^{\pm}=a_{\pm}\omega_1+b_{\pm}\omega_2+c_{\pm}\nu$. The
tangential part of $\bxi^{\pm}$ at $x\in\Gamma$ is the 1-form
$\bxi^{\pm}(x)$ restricted to directions tangent to $\Gamma$, i.e.,
$T_x\Gamma$. In terms of components along $\Gamma$ we identify the
tangential part with
\begin{equation}
\bxi^{\pm}_t=a_{\pm}\omega_1+b_{\pm}\omega_2.
\end{equation}
If the 2-form, $\bEta^{\pm}=e_{\pm}\omega_1\wedge\nu+
f_{\pm}\omega_2\wedge\nu+g_{\pm}\omega_1\wedge\omega_2$, then the
tangential components of $\bEta^{\pm}$ along $\Gamma$ are identified
with the tangential components of the one-form $i_{\bn}\bEta^{\pm}$
wedged with $-\nu:$
\begin{equation}
\bEta^{\pm}_t=e_{\pm}\omega_1\wedge\nu+f_{\pm}\omega_2\wedge\nu.
\end{equation}
The normal component of $\bxi$ is $i_{\bn}\bxi$ and that of $\bEta$ is
simply $\bEta\restrictedto_{\Gamma}$.
The boundary operators \cite{EpGr,EpGr2} are given by:
\begin{align*}
K_0[r](\bx) &= \int_\Gamma \frac{\partial g_k}{\partial n_{\bx}}(\bx - \by) \,
r(\by) \, dA(\by), \\
K_1[r](\bx_0) &= \star_2 d_{\Gamma} \int_\Gamma g_k(\bx - \by) \, 
r(\by) \, dA(\by) \, ;
\end{align*}
$K_0$ is an operator of order $-1$ and $K_1$ is an operator of order
$0$.  Furthermore,
\begin{align*}
K_{2,n} [\bj](\bx) &= i_{\bn}\left[\int_\Gamma  g_k(\bx - \by) \, 
\bj(\by)\cdot d\bx \, dA(\by)\right] \, , \\
K_{2,t} [\bj](\bx) &= \star_2\left[\int_\Gamma   g_k(\bx - \by) \, 
\bj(\by)\cdot d\bx \, dA(\by)\right]_{t} \, .
\end{align*}
These are operators of order $-1$. Finally, the operators $K_3$ of
order $0$ and $K_4$ of order $-1$ are given as:
\begin{align*}
K_3[\bj](\bx) &= \int_\Gamma 
 d_{\bx} g_k(\bx - \by) \cdot (\bj(\by) \times \bn(\bx))   
\, dA(\by)  \, , \\
K_4[\bj](\bx) &= \int_\Gamma 
\left[ d_{\bx} g_k(\bx - \by) \, (\bj(\by) \cdot \bn(\bx)) -  
\frac{\partial g_k}{\partial n_{\bx}} (\bx - \by) \, \bj(\by)\cdot d\bx \right] 
\, dA(\by) \, .
\end{align*}

\frenchspacing
{\small \bibliographystyle{siam}{\bibliography{alla-k}}}

\end{document}